\newcommand{\ZZ}{\mathbb{Z}}
\newcommand{\NN}{\mathbb{N}}
\newcommand{\QQ}{\mathbb{Q}}
\DeclareMathOperator{\sat}{sat}
\DeclareMathOperator{\reg}{reg}
\DeclareMathOperator{\pdim}{pdim}
\DeclareMathOperator{\rk}{rk}
\DeclareMathOperator{\supp}{Supp}
\DeclareMathOperator{\rank}{rk}
\DeclareMathOperator{\Proj}{Proj}
\DeclareMathOperator{\Spec}{Spec}
\DeclareMathOperator{\Syz}{Syz}
\newcommand{\Ht}{\textnormal{Ht}}
\newcommand{\PGL}{\textsc{pgl}}
\newcommand{\Kext}{\mathbb K}
\newcommand{\Kalg}{{\Bbbk\textnormal{-Alg}}}
\newcommand{\Sets}{{\textnormal{Sets}}}
\newcommand{\schm}{{\textnormal{Sch}/\Bbbk}}
\newcommand{\cN}{\mathcal N}
\renewcommand{\geq}{\geqslant}
\renewcommand{\leq}{\leqslant}
\newcommand{\Sk}{A[\mathbf x]}
\newcommand{\Kx}{\Bbbk[\mathbf x]}
\newcommand{\kappax}{\Bbbk[\mathbf x]}
\newcommand{\T}{\mathbb{T}}
\newcommand{\redg}{\xrightarrow{G^{(s)}}}
\newcommand{\fullredg}{\xrightarrow{G^{(s)}}_*}
\newcommand{\PP}{\mathbb{P}}
\newcommand{\X}{\mathcal{X}}
\newcommand{\mult}[1]{\X_{\mathcal{P}}(#1)}
\newcommand{\nmult}[1]{\overline{\X}_{\mathcal{P}}(#1)}
\newcommand{\GrassFunctor}[1]{{\mathcal{G}\mathrm{r}}_{p(#1)}^{N_m(#1)}}
\newcommand{\GrassScheme}[1]{\mathrm{Gr}_{p(#1)}^{N_m(#1)}}
\newcommand{\GrassFunctorL}[2]{{\mathcal{G}}_{#1}^{[#2]}}
\newcommand{\QuotFunctor}{{\mathcal{Q}\mathrm{uot}}_{p(z)}^{m}}
\newcommand{\QuotFunctorReg}[1]{{\mathcal{Q}\mathrm{uot}}_{p(z)}^{m,[#1]}}
\newcommand{\QRegA}[2]{{\mathcal{Q}}_{#1}^{[#2]}}
\newcommand{\QuotScheme}{\mathrm{Quot}_{p(z)}^{m}}
\newcommand{\UC}{\mathcal{U}_{\mathcal I}^c}
\newcommand{\MFFunctor}[1]{\underline{\mathbf{Mf}}_{#1}}
\newcommand{\MFScheme}[1]{\mathbf{Mf}_{#1}}
\numberwithin{equation}{section}
\newtheorem{theorem}{Theorem}[section]
\newtheorem{corollary}[theorem]{Corollary}
\newtheorem{proposition}[theorem]{Proposition}
\newtheorem{lemma}[theorem]{Lemma}
\theoremstyle{definition}
\newtheorem{definition}[theorem]{Definition}
\newtheorem{example}[theorem]{Example}
\newtheorem{remark}[theorem]{Remark}
\begin{document}

\title{Computing Quot schemes via marked bases over quasi-stable modules}

\author[M.~Albert]{Mario Albert}

\author[C.~Bertone]{Cristina Bertone}

\author[M.~Roggero]{Margherita Roggero}

\author[W. M.~Seiler]{Werner M.~Seiler}

\address{Cristina Bertone \and Margherita Roggero\\ Dipartimento di Matematica, Universit\`a degli Studi di Torino \\ Via Carlo Alberto 10 \\ 10123 Torino \\ Italy}
\email{\href{mailto:cristina.bertone@unito.it}{cristina.bertone@unito.it}, \href{mailto:margherita.roggero@unito.it}{margherita.roggero@unito.it}}

\address{Mario Albert \and Werner M.~Seiler\\ Institut f\"ur Mathematik, Universit\"at Kassel\\ 34132 Kassel, Germany}
\email{\href{mailto:albert@mathematik.uni-kassel.de}{albert@mathematik.uni-kassel.de},\href{mailto:seiler@mathematik.uni-kassel.de}{seiler@mathematik.uni-kassel.de}}
%\thanks{The second and third author are member of GNSAGA (INdAM, Italy), which partially funded the
%second author during the preparation of this article. The  third author was partially supported by the framework of PRIN 2010-11 \lq\lq Geometria delle variet\`a\ algebriche\rq\rq, cofinanced by MIUR}

\begin{abstract}

  Let $ \Bbbk$ be a field of arbitrary characteristic, $A$ a Noetherian
  $ \Bbbk$-algebra and consider the polynomial ring $\Sk=A[x_0,\dots,x_n]$.
  We consider homogeneous submodules of $\Sk^m$ having a special set of
  generators: a marked basis over a quasi-stable module. Such a marked
  basis inherits several good properties of a Gr\"obner basis, including a
  Noetherian reduction relation. The set of submodules of $\Sk^m$ having a
  marked basis over a given quasi-stable module has an affine scheme
  structure that we are able to exhibit. Furthermore, the syzygies of a
  module generated by such a marked basis are generated by a marked basis,
  too (over a suitable quasi-stable module in
  $\oplus^{m'}_{i=1} \Sk(-d_i)$).  We apply the construction of marked
  bases and related properties to the investigation of Quot functors (and
  schemes). More precisely, for a given Hilbert polynomial, we can
  explicitely construct (up to the action of a general linear group) an
  open cover of the corresponding Quot functor made up of open functors
  represented by affine schemes. This gives a new proof that the Quot
  functor is the functor of points of a scheme. We also exhibit a
  procedure to obtain the equations defining a given Quot scheme as a
  subscheme of a suitable Grassmannian.  Thanks to the good behaviour of
  marked bases with respect to Castelnuovo-Mumford regularity, we can adapt
  our methods in order to study the locus of the Quot scheme given by an
  upper bound on the regularity of its points.
\end{abstract}

\subjclass[2010]{13P10, 13D02, 14A15, 14C05}
\keywords{quasi-stable module, marked basis, reduction relation, free resolution, Castelnuovo-Mumford regularity, Quot scheme}

\maketitle

\section*{Introduction}

Marked bases may be considered as a form of Gr\"obner bases not depending
on a term order.  Instead, one chooses for each generator some term as head
term such that the head terms generate a prescribed monomial ideal.  For a
long time, it was believed that it was not possible to use a marked basis
which is not a Gr\"obner basis with respect to some term order while
preserving the good features of Gr\"obner bases such as termination of the
standard normal form algorithm.  Indeed, in \cite{RS} it was shown that the
standard normal form algorithm always terminates, if and only if the head
terms are chosen according to a term order.  However, \cite{RS} contains no
results about other normal form algorithms and in \cite{BCLR, CR} it was
proven that the involutive normal form algorithm for the Pommaret division
will terminate whenever the head terms generate a strongly stable ideal
over a coefficient field of characteristic zero.

The present paper is concerned with generalizing and deepening the results
of \cite{BBR, BCLR, CR, LR2} in order to investigate \emph{Quot schemes}
over fields of arbitrary characteristic.  The Quot functor was introduced
by Grothendieck in \cite{GroHilbert}, where he also proved that this
functor is the functor of points of a projective scheme. A Hilbert scheme
is a special case of a Quot scheme. In the present paper, we consider the
Quot functor that associates to a $\Bbbk$-scheme $Z$ the set of quotients
of $\mathcal O^m_{\mathbb P^n_{Z}}$ with a given Hilbert polynomial and
flat over $Z$ (see Section \ref{sec:defquot}).  We will not exploit the
fact that the Quot functor is the functor of points of a scheme.  Actually,
we will provide an independent proof of the existence of the Quot scheme
(Corollary \ref{cor:QuotRepr}) only assuming that the Quot functor is a
Zariski sheaf \cite[Section 5.1.3]{nitsure}.

After setting some notations and recalling some useful notions and results
(Sections \ref{sec:not}, \ref{Sec:PB,QS,S}), the first part of the paper is
devoted to the investigation of the properties of marked sets, bases and
schemes over a quasi-stable module (Sections \ref{sec:markedModules} and
\ref {subsec:MarkedScheme}) and of the syzygies of a marked basis (Section
\ref{sec:syz}).

Let $ \Bbbk$ be a field \emph{of arbitrary characteristic} and $A$ a
Noetherian $ \Bbbk$-algebra.  For variables $\mathbf x:=\{x_0,\dots,x_n\}$
and a \emph{weight vector} $\mathbf d=(d_1,\dots,d_m)\in\ZZ^{m}$, we
consider homogeneous submodules of the graded $\Sk$-{module}
$\Sk^m_{\mathbf d}:=\oplus_{i=1}^m \Sk(-d_i)$.  We will define marked bases
over a \emph {quasi-stable} monomial module $U$, i.\,e.\ over a monomial
module possessing a Pommaret basis, for free submodules of
$\Sk^m_{\mathbf d}$ and investigate to what extent the algebraic properties
of Pommaret bases shown in \cite{Seiler2009II} carry over to marked
bases. It will turn out that marked bases provide us with simple bounds on
some homological invariants of the module they generate, such as Betti
numbers, (Castelnuovo-Mumford) regularity and projective dimension
(Corollary \ref {cor:bounds}). Furthermore, we prove that the set of
modules generated by a marked basis over a given quasi-stable modules has
an affine scheme structure and we exhibit an algorithmic procedure to
explicit compute equations defining this scheme (Theorem \ref{thm:rappr}).

In the second part of the paper, we use marked bases in the context of a
very specific application, namely for the derivation and the study of
equations for Quot schemes and of special loci on it, similarly to what is
shown in \cite{BBR,BLMR,BLR2013} for Hilbert schemes in the characteristic
zero case.

In Section \ref{sec:detchange}, we consider the usual action of
$g\in\PGL(n+1)$ over a finite subset $F$ of $\Sk^m$. We show that for every
finite set $F\subset \Sk^m$ we can algorithmically construct a
transformation $g\in \PGL(n+1)$ such that the set $F$ under the action of
$G$ becomes marked over a quasi-stable module.

In Section \ref{sec:defquot}, we recall the definition of the Quot functor
and its embedding in a suitable Grassmannian functor.  We first prove that,
up to the action of $\PGL$, a Grassmann functor has a cover made up of open
subsets depending on quasi-stable modules (Section \ref{sec:covergr}).  In
Section \ref{sec:coverQuot}, we intersect this open cover with a Quot
scheme and prove that, for a given Quot scheme, we have an open cover (up
to the action of $\mathrm{PGL}(n+1)$) whose open subsets are suitable
marked schemes over quasi-stable modules belonging to the Quot scheme
(Theorem \ref{thm:quasiStableCoverQuot}). The same holds if, instead of
considering the whole marked scheme, we are interested in the points
respecting an upper bound on the regularity: in this case, the open subsets
are marked schemes over quasi-stable modules that respect the bound on the
regularity too.

Starting from this open cover, we obtain in Section
\ref{sec:GlobalEquations} global equations for a Quot scheme (resp.\ its
locus defined by an upper bound on the regularity) as a closed (resp.\
locally closed) subscheme of a suitable projective space (Theorem
\ref{thm:QuotClosedInGrass}).

We end the paper with an explicit example (Section \ref{sec:example}).

\section{Notations and Generalities}\label{sec:not}

For every $n>0$, we consider the variables $x_0,\dots,x_n$, ordered as
$x_0<\cdots < x_{n-1} < x_n$ (see \cite{Seiler2009I,Seiler2009II}). This is
a non-standard way to sort the variables, but it is suitable for our
purposes. In some of the papers we refer to, variables are ordered in the
opposite way, hence the interested reader should pay attention to this when
browsing a reference.  A \emph{term} is a power product
$x^\alpha= x_0^ {\alpha_0}\cdots x_n^{\alpha_n}$. We denote by $\mathbb T$
the set of terms in the variables $x_0,\dots,x_n$.  We denote by
$\max(x^\alpha)$ the largest variable that appears with non-zero exponent
in $x^\alpha$ and, analogously, $\min(x^\alpha)$ is the smallest variable
that appears with non-zero exponent in $x^\alpha$. The \emph{degree} of a
term is $\deg(x^\alpha) =\sum_{i=0}^n \alpha_i =\vert\alpha\vert$.

Let $ \Bbbk$ be a field and $A$ be a Noetherian $ \Bbbk$-algebra. Consider the
polynomial ring $\Sk:=A[x_0,\dots,x_n]$ with the standard grading: for every
$a\in A$ we set $\deg(a)=0$. We write $\Sk_t$ for the set of
homogeneous polynomials of degree $t$ in $\Sk$. Since $\Sk=\oplus_{i\geq 0}
\Sk_i$, we define $\Sk_{\geq t}:=\oplus_{i\geq t} \Sk_i$. The ideals we
consider in $\Sk$ are always homogeneous. If $I\subset \Sk$ is a homogeneous
ideal, we write $I_t$ for $I\cap \Sk_t$ and $I_{\geq t}$ for $I\cap \Sk_{\geq
t}$. The ideal $I_{\geq t}$ is the \emph{truncation} of $I$ in degree $t$. If
$F\subset \Sk$ is a set of polynomials, we denote by $(F)$ the ideal generated
by $F$.

The ideal $J\subseteq \Sk$ is \emph{monomial} if it is generated by a set
of terms. The monomial ideal $J$ has a unique minimal set of generators
made of terms and we call it \emph{the monomial basis} of $J$, denoted by
$\mathcal B_J$. We define $\mathcal N(J)\subseteq \mathbb T$ as the set of terms in
$\mathbb T$ not belonging to $J$.  For every polynomial $f\in \Sk$,
$\supp(f)$ is the set of terms appearing in $f$ with non-zero coefficient:
$f=\sum_{x^\alpha\in \supp(f)}c_\alpha x^\alpha$, where $c_\alpha\in A$ is
non-zero. 

Hereafter, we will simply write \emph{module}
(resp. submodule) for a $\Sk$-module (resp.\ submodule of an
$\Sk$-module). For modules and submodules over other rings, we will
explicitly state the ring.

A module $M$ is \emph{graded} if $M$ has a decomposition
\begin{equation*}\label{eq:gradmod}
M=\oplus_{j\in \NN} M_j \text{ such that }\Sk_i M_j\subseteq M_{i+j}.
\end{equation*}
If $M$ is a graded module, the module $M_{\geq t}:=\oplus_{i\geq t}M_i$ is the truncation of $M$ in degree $t$.
As usual, if $M$ is a graded module, the module $M(d)$ is the graded module
(isomorphic to $M$ as a module) such that $M(d)_e=M_{d+e}$. We fix an integer $m\geq 1$ and $\mathbf
d=(d_1,\dots,d_m)\in \mathbb Z^m$. We consider the free graded $\Sk$-module
$\Sk^m_{\mathbf d}:=\oplus_{i=1}^m \Sk(-d_i)e_i$, where $e_1, \dots, e_m$ are
the standard free generators. Every  submodule of $\Sk^m_{\mathbf d}$ is
finitely generated and from now on we will only consider graded submodules of
$\Sk^m_{\mathbf d}$.

If $F$ is a set of homogeneous elements of $\Sk^m_{\mathbf d}$, we write
$\langle F\rangle$ for the graded $\Sk$-module generated by $F$ in
$\Sk^m_{\mathbf d}$. If $F=F_s$ for some positive integer $s$, we
denote by $\langle F\rangle^A$ the $A$-module generated by $F$ in
$(\Sk^m_{\mathbf d})_s$.  In particular, if $M$ is a graded submodule, every
graded component $M_j$ has the structure of  $A$-submodule in $\Sk^m_j$.

Following \cite[Chapter 15]{Eis}, a \emph{term of $\Sk^m_{\mathbf d}$} is an element
of the form $t=x^\alpha e_i$ for $i\in \{1,\dots,m\}$ and $x^\alpha\in \mathbb
T$. Furthermore, we denote by $\mathbb{T}^m$ the set of terms in
$\Sk^m_{\mathbf d}$. Observe that $\mathbb T^m=\cup_{i = 1}^m \mathbb{T}e_i$.
For $x^\alpha e_i, x^\beta e_j$ in $\mathbb T^m$ we say that $x^\alpha e_i$
divides $x^\beta e_j$ if $i=j$ and $x^\alpha$ divides $x^\beta$.

A   submodule $U$ of $\Sk^m_{\mathbf d}$ is \emph{monomial}, if it is
generated by elements in $\mathbb T^m$.  Any monomial submodule $U$ of $\Sk^m$ can
be written as
\begin{equation}\label{eq:decMonMod}
U=\oplus_{k=1}^m J^{(k)} e_k\subset \oplus \Sk(-d_k) e_k =\Sk^m_{\mathbf d},
\end{equation}
where $J^{(k)}$ is the monomial ideal generated by the terms $x^\alpha$ such
that $x^\alpha e_k\in U$. We define $\cN(U) := \cup_{k = 1}^m \cN(J^{(k)})
e_k$, where $\mathcal N(J^{(k)})\subseteq \mathbb T$.

If $M\subset \Sk^m_{\mathbf d}$ is a submodule such that for every degree $s$,
the homogeneous component $M_s$ is a free $A$-module, we define the
\emph{Hilbert function} of $M$ as $h_{M}(s)=\rk(M_s)$, which is the number of
generators contained in an $A$-basis of $M_s$. In this case, we will also say
that $M$ \emph{admits a Hilbert function}. In this setting, this definition
corresponds to the classical one (e.g. \cite[Chapter 12]{Eis}), considering
the localization of $A$ in any of its maximal ideals.  If we consider a
monomial module $U$, for every $s$, $U_s$ is always a free $A$-module and
$h_{U}(s)=\sum_{k=1}^m h_{J^{(k)}}(s)$, with $J^{(k)}$ as in
\eqref{eq:decMonMod}. 
If $M$ admits a Hilbert function, then for $s>>0$, $h_M(s)=p(s)$, where $p(z)$ is a numerical polynomial (see also \cite[Section 1]{BLMR}).

If $A= \Bbbk$, then Hilbert's Syzygy Theorem guarantees that every module
$M\subseteq \Sk^m_{\mathbf d}$ has a graded free resolution of length at most
$n$. If $A$ is an arbitrary $ \Bbbk$-algebra, there exist generally modules in
$\Sk^m_{\mathbf d}$ whose minimal free resolution has an infinite length (see
\cite[Chapter 6, Section 1, Exercise 11]{Eis}).

Assume that the module $M\subseteq \Sk^m_{\mathbf d}$ has the following graded minimal free resolution
\begin{equation}\label{eq:freeRes}
0\rightarrow E_\ell\rightarrow \cdots \rightarrow E_1\rightarrow E_0\rightarrow
M\rightarrow 0,
\end{equation}
where $E_i=\oplus_j \Sk(-j)^{b_{i,j}}$. The \emph{Betti numbers} of the
module $M$ are the set of positive integers
$\{b_{i,j}\}_{0\leq i\leq p,j\in \ZZ}$.  The module $M$ is $t$-regular if
$t\geq j-i$ for every $i,j$ such that $b_{i,j}\neq 0$. The \emph
{(Castelnuovo-Mumford) regularity} of $M$, denoted by $\reg(M)$, is the
smallest $t$ for which $M$ is $t$-regular (see for instance \cite{Eis2}).
If $M\subset \Sk^m_{\mathbf d}$ admits a Hilbert function, we recall that
$h_{\Sk^m/M}(s)=p(s)$ for all degrees $s\geq \reg(M)$.  The \emph
{projective dimension} of $M$, denoted by $\pdim(M)$, is defined as the
length of the graded minimal free resolution \eqref{eq:freeRes}, i.\,e.\ $\pdim(M) = \ell$.

From \cite[Definition 3.5.7]{KR1}, consider the ideal $\mathfrak m:=(x_0,\dots,x_n)\subset\Sk$. The \emph{saturation} of $M$, submodule of $\Sk^m$, is
\[
M^{\sat}:=M: \mathfrak m^\infty=\bigcup_{i \in \mathbb N} M: \mathfrak m^i=\{f\in \Sk^m\vert \mathfrak m^i f\subset M \text{ for some }i\in \mathbb N\}
\]

%%%%%%%%%%%%%%%%%%%%%%%%%%%%%%%%%%%%%%%%%%%

\section{Pommaret basis, Quasi-Stability and Stability}\label{Sec:PB,QS,S}

We now recall the definition and some properties of the Pommaret basis of a
monomial module.  Several of the following definitions and properties hold
in a more general setting, namely for arbitrary \emph{involutive
  divisions}. For a deeper insight into this topic, we refer to
\cite{Seiler2009I,Seiler2009II} and the references therein. 

For an arbitrary term $x^\alpha \in \mathbb{T}$, we define the following sets:
\begin{itemize}
\item the \emph{multiplicative variables of $x^\alpha$}: $\mult{x^\alpha} := \{x_i\mid x_i \leq \min(x^\alpha)\}$,
\item the \emph{non-multiplicative variables of $x^\alpha$}: $\nmult{x^\alpha} := \{x_0, \dots, x_n\} \setminus \mult{x^\alpha}$.
\end{itemize}

\begin{definition}\label{def:qsModule}
Let  $T\subset \mathbb T^m$ be a finite set of monomial generators for $U$.
  For every $\tau=x^\alpha e_k$ in $T$, we define the \emph{Pommaret cone
    in $\Sk^m_{\mathbf d}$} of $\tau$ as
  \[
    \mathcal C_{\mathcal P}^m(\tau):=\{x^\delta x^\alpha e_k\mid \delta_i=0 \, \forall x_i\in \nmult{x^\alpha} \}\subset \mathbb Te_k.
  \]
    Let $U$ be a monomial submodule of $\Sk^m_{\mathbf d}$.  We say that $T\subset \mathbb T^m$ is a \emph{Pommaret basis} of $U$ if
  \[ U\cap \mathbb T^m=\bigsqcup_{\tau \in T}\mathcal C_{\mathcal P}^m(\tau).
  \]

\end{definition}

If $U$ is a monomial module, we denote its Pommaret basis (if it exists) by
$\mathcal P(U)$.  The existence of the Pommaret basis of a monomial module
in $\Sk$ is equivalent to the concept of
\emph{quasi-stability}. In the literature, one can find a number
  of alternative names for quasi-stability (e.g. \cite{CS05,BG06,HPV03}). We recall here the definition of quasi-stable and
stable monomial modules. Both properties do not depend on the characteristic
of the underlying field.  A thorough reference on this subject is again
\cite{Seiler2009II}.

\begin{definition}\cite[Definition 4.4]{CMR15}\label{def:qstable}
Let $U\subset \Sk^m$ be a monomial module.
\begin{enumerate}[(i)]
\item\label{qstable_i} $U$ is \emph{quasi-stable} if for every term
  $x^\alpha e_k \in U\cap \mathbb T^m$ and for every non-multiplicative variable
  $x_j\in \nmult{x^\alpha}$, there is an exponent $s\geq 0$ such that
  $x_j^s x^\alpha e_k/\min(x^\alpha)\in U$.
\item $U$ is \emph{stable} if for every term $x^\alpha  e_k\in U\cap \mathbb T^m$
  and for every non-multiplicative variable $x_j\in \nmult{x^\alpha}$
  we have $x_jx^\alpha e_k/\min(x^\alpha)\in U$.
\end{enumerate}
\end{definition}

\begin{theorem}\label{thm:BorelEquiv}\cite[Proposition 4.4, Proposition 4.6]{Seiler2009II}\cite[Remark 2.10]{Mall1998}
  Let $U\subset \Sk^m$ be a monomial ideal.  $U$ is quasi-stable if and only
  if it has a (finite) Pommaret basis, denoted by $\mathcal
  P(U)$. Furthermore, $U$ is stable if and only if $\mathcal P(U)$ is its minimal monomial generating set.\\
    If $U\subset \Sk$ is quasi-stable, then $U_{\geq s}$ is  quasi-stable for every $s\geq 0$.
\end{theorem}

Recalling that any monomial module $U$ can be written as $U=\oplus_{k=1}^m J^{(k)} e_k$, with
$J^{(k)}$ suitable monomial ideals in $\Sk$ (see \eqref{eq:decMonMod}), it is immediate that $U$ is quasi-stable (resp. stable) if and only if $J^{(k)}$ is a quasi stable (resp. stable) ideal, for every $k\in \{1,\dots, m\}$.

If $U\subset \Sk^m$ is a monomial module, then a term $x^\mu e_k$ is an \emph{obstruction} to quasi-stability for $U$ if there is  $x_j >x_c=\min (x^\mu)$, such that for every $s\geq 0$, $ (x_j^s {x^\mu})e_k/ {x_c^{\mu_c}} \notin U$.
If the term $x^\mu e_k$ is an obstruction to quasi-stability for $U$, observe that in particular $\displaystyle x_j^{\mu_c}\frac{x^\mu}{x_c^{\mu_c}}e_k$ does not belong to $U$.

The following lemma collects some properties of a Pommaret basis and of the quasi-stable 
module it generates.  In particular, certain invariants of the quasi-stable module
can be directly read off from a Pommaret basis.

\begin{lemma}\label{potenze}
Let  $U$ be a quasi-stable module in $\Sk^m$.
\begin{enumerate}[(i)]
\item \label{importante_0bis} $U^{\sat}=U: (x_0)^\infty$;
\item \label{importante_0} The satiety of $U$ is the maximal degree of a
  term in $\mathcal P(U)$ which is divisible by the smallest variable in
  the polynomial ring. If $U$ is saturated, then the smallest variable of the ring does not divide any term in $\mathcal P(U)$.
\item\label{importante_00} The regularity of $U$ is the maximal degree of a
  term in $\mathcal P(U)$.
\item \label{potenze_pd} The projective dimension of $U$ is $n-D$ where $D$ is the index of the variable $\min\{\min(x^\alpha) \ | \ x^\alpha \in \mathcal{P}(J)\}$.
\item \label{potenze_ii} If $x^\eta e_k\notin U$ and $x_ix^\eta e_k \in U$, then
  either $x_ix^\eta e_k\in \mathcal P(U)$ or $x_i \in \nmult{x^\eta}$.
\item \label{potenze_iii} If $x^\eta e_k\notin U$ and
  $\left(x^\eta\cdot x^\delta\right) e_k= \left(x^{\delta'}x^\alpha\right) e_k \in U$ with
  $x^\alpha e_k\in \mathcal P(U)$ and $x^{\delta'}\in A[\mult{x^\alpha}]$,
  then $x^{\delta'} <_{lex} x^\delta$.
\end{enumerate}
\end{lemma}

\begin{proof}
  For $m=1$, items \eqref{importante_0}, \eqref{importante_00} and \eqref{potenze_pd}
  are proven in \cite[Lemma 4.11, Theorems 9.2 and 8.11]{Seiler2009II}, item%s \eqref{potenze_i} and
   \eqref{potenze_ii} is shown in \cite[Lemma
  3]{Bert2015}, item \eqref{potenze_iii} is a consequence of
  \eqref{potenze_ii}. We obtain the statement for $U$ applying the results for the ideals to $J^{(k)}$, $k\in \{1,\dots,m\}$ of \eqref{eq:decMonMod}.
\end{proof}

\begin{proposition}\label{prop:varieS}\
\begin{enumerate}[(i)]
\item\label{prop:varieS_iii} Let $U\subset \Sk^m$ be a quasi-stable module generated in degrees less than or equal to $s$. The module $U$ is $s$-regular if and only if $U_{\geq s}$ is stable.
\item \label{prop:varieS_iv} Let  $U$ be a quasi-stable module in $\Sk^m$  and
  consider a degree $s\geq \reg(U)$. Then $U_{\geq s}$ is stable and the set of terms $U_s\cap \T^m$ is its Pommaret basis.
\end{enumerate}
\end{proposition}
\begin{proof}
For the ideal case $m=1$ we refer to \cite[Lemma 2.2, Lemma 2.3, Theorem 9.2, Proposition 9.6]{Seiler2009II}. For the module case, we repeat the argument of Lemma \ref{prop:varieS}.
\end{proof}
%%%%%%%%%%%%%%%%%%%%%%%%%%%%%%%%%

\section{Marked Modules}\label{sec:markedModules}

In this section, we extend the notions of  marked polynomial, marked
basis and marked family, investigated in \cite{BCLR, CMR15, CR, LR2} for
ideals, to finitely generated modules in $\Sk^m_{\mathbf d}$.  Let
$U\subset \Sk^m_{\mathbf d}$ be a monomial module, so that
$U=\oplus_{k=1}^m J^{(k)} e_k$, with $J^{(k)}$ monomial ideal in $\Sk$.  If
$U$ is a quasi-stable module, we denote by $\mathcal{P}(U)$ the Pommaret
basis of $U$.

\begin{definition}\cite{RS}
  A \emph{marked polynomial} is a polynomial $f\in \Sk$ together with a
  fixed term $x^\alpha$ in $\supp(f)$ whose coefficient is equal to $1_A$.
  This term is called \emph{head term} of $f$ and denoted by $\Ht(f)$.
  With a marked polynomial $f$, we associate the following sets:
  \begin{itemize}
  \item the \emph{multiplicative variables of $f$}: $\mult{f} := \mult{\Ht(f)}$;
  \item the \emph{non-multiplicative variables of $f$}: $\nmult{f} := \nmult{\Ht(f)}$.
  \end{itemize}
\end{definition}

\begin{definition}
  A \emph{marked homogeneous module element} is a homogeneous module
  element in $\Sk^m_{\mathbf d}$ with a fixed term in its support whose
  coefficient is $1_A$ and which is called \emph{head term}. More
  precisely, a marked homogeneous module element is of the form
  \begin{equation*}
    f_\alpha^k = f_\alpha e_k - \sum_{l\neq k} g_l e_l \in \Sk^m_{\mathbf d}
  \end{equation*}
  where $f_\alpha$ is a marked polynomial with $\Ht(f_\alpha)=x^\alpha$,
  and $\Ht(f_\alpha^k) = \Ht(f_\alpha) e_k = x^\alpha e_k$.
\end{definition}

The following definition is fundamental for this work.  It is modelled on
a well-known characteristic property of Gr\"obner bases.

\begin{definition}\label{def:MarkedSet}
  Let $T \subset \mathbb{T}^m$ be a finite set and $U$ the module generated
  by it in $\Sk^m_{\mathbf d}$.  A \emph{$T$-marked set} is a finite set
  $G \subset \Sk^m_{\mathbf d}$ of marked homogeneous module elements
  $f_\alpha^k$ with $\Ht(f_\alpha^k) = x^\alpha e^k \in T$ and
  $\supp(f_\alpha^k - x^\alpha e_k) \subset \langle \cN (U) \rangle$
  (obviously, $|G|=|T|$).

  The $T$-marked set $G$ is a \emph{$T$-marked
    basis}, if $\mathcal{N}(U)_s$ is a basis of
  $\left(\Sk^m_{\mathbf d}\right)_s/\langle G\rangle_s$ as $A$-module, i.\,e.\ if
  $\left(\Sk^m_{\mathbf d}\right)_s = \langle G\rangle_s \oplus \langle \mathcal
  {N}(U)_s \rangle^A$ for all $s$.
\end{definition}

\begin{lemma}\label{lem:markedOnMon}
  Let $T\subset \mathbb T^m$ be a finite set and $U$ the module generated
  by it in $\Sk^m_{\mathbf d}$. Let $M\subseteq \Sk^m_{\mathbf d}$ be a
  module such that for every $s$ the set $\mathcal N(U)_s$ generates the
  $A$-module $(\Sk^m_{\mathbf d})_s/M_s$. Then for every degree $s$ there
  exists a $U_s\cap \mathbb T^m$-marked set $F=F_s$ contained in $M_s$ such
  that
  \[
    \left(\Sk^m_{\mathbf d}\right)_s=\langle F\rangle^A\oplus \langle\mathcal N(U)_s\rangle^A.
  \]
\end{lemma}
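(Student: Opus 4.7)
The plan is to build $F_s$ term-by-term: for each $x^\alpha e_k \in U_s \cap \mathbb{T}^m$, I exploit the hypothesis that $\mathcal{N}(U)_s$ generates $(\Sk^m_{\mathbf d})_s/M_s$ as an $A$-module in order to produce an element $g_{\alpha,k} \in \langle \mathcal{N}(U)_s\rangle^A$ with $x^\alpha e_k - g_{\alpha,k} \in M_s$. Then I set $f_{\alpha,k} := x^\alpha e_k - g_{\alpha,k}$ and declare this to be the marked module element with head term $x^\alpha e_k$; its tail $-g_{\alpha,k}$ is by construction supported on $\mathcal{N}(U)$, so $F_s := \{f_{\alpha,k} : x^\alpha e_k \in U_s \cap \mathbb{T}^m\}$ is indeed a $U_s \cap \mathbb{T}^m$-marked set, and it is contained in $M_s$ by the choice of $g_{\alpha,k}$.

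It then remains to verify the decomposition $(\Sk^m_{\mathbf d})_s = \langle F_s\rangle^A \oplus \langle \mathcal{N}(U)_s\rangle^A$. For the sum, I use the fact that $\mathbb{T}^m \cap (\Sk^m_{\mathbf d})_s$ is the disjoint union of $U_s \cap \mathbb{T}^m$ and $\mathcal{N}(U)_s$, and hence an $A$-basis of $(\Sk^m_{\mathbf d})_s$; each element of the first piece can be written as $f_{\alpha,k} + g_{\alpha,k}$, so both pieces together $A$-generate $(\Sk^m_{\mathbf d})_s$.

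For the directness of the sum, suppose $\sum_{\alpha,k} c_{\alpha,k} f_{\alpha,k} = h$ for some $h \in \langle \mathcal{N}(U)_s\rangle^A$ and $c_{\alpha,k} \in A$. Expanding $f_{\alpha,k} = x^\alpha e_k - g_{\alpha,k}$ and reading off the coefficient of each $x^\alpha e_k \in U_s \cap \mathbb{T}^m$ (which occurs neither in any $g_{\beta,l}$ nor in $h$, since those are supported on $\mathcal{N}(U)_s$), I obtain $c_{\alpha,k} = 0$ for all $(\alpha,k)$, whence also $h = 0$. This uses only that $\mathbb{T}^m_s$ is an $A$-basis of $(\Sk^m_{\mathbf d})_s$, so no serious obstacle arises; the whole argument is a straightforward linear-algebra unwinding of the hypothesis on quotients, with the marked-set construction being essentially forced by the requirement that head terms range over $U_s\cap\mathbb{T}^m$ and tails lie in $\langle \mathcal{N}(U)\rangle$.
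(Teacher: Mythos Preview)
Your proposal is correct and follows essentially the same approach as the paper: construct $f_{\alpha,k}$ by lifting a representative of $x^\alpha e_k$ modulo $M_s$ from $\langle \mathcal N(U)_s\rangle^A$, then verify the direct sum by noting that the head terms $x^\alpha e_k\in U_s$ cannot appear in any tail or in $\langle \mathcal N(U)_s\rangle^A$. The paper phrases the construction via the projection $\pi$ onto the quotient, but the content is identical.
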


\begin{proof}
  Let $\pi$ be the usual projection morphism of $\Sk^m_{\mathbf d}$ onto
  the quotient $\Sk^m_{\mathbf d}/M$. For every
  $x^\alpha e_k\in U_s\cap \mathbb T^m$, we consider $\pi(x^\alpha e_k)$
  and choose a representation
  $\pi(x^\alpha e_k)=\sum_{x^\eta e_l\in \mathcal N(U)_s}c_{\eta l}^{\alpha
    k}x^\eta e_l$, $c_{\eta l}^{\alpha k}\in A$, which exists as
  $\mathcal N(U)_s$ generates $(\Sk^m_{\mathbf d})_s/M_s$ as an
  $A$-module. We consider the set of marked module elements
  $F=\{f_\alpha^k\}_{x^\alpha e_k\in U_s}$, where
  $f_\alpha^k:=x^\alpha e_k-\pi(x^\alpha e_k)$ and
  $\Ht(f_\alpha^k)=x^\alpha e_k$.

  We now prove that
  $\Sk^m_s=\langle F\rangle^A\oplus \langle\mathcal N(U)_s\rangle^A$. We
  first prove that every term in $\mathbb T^m_s$ belongs to
  $\langle F\rangle^A+ \langle\mathcal N(U)_s\rangle^A$. If
  $x^\beta e_l \in \mathcal N(U)_s$, there is nothing to prove. If
  $x^\beta e_l\in U_s$, then there is $f_\beta^l\in F$ such that
  $\Ht(f_\beta^l)=x^\beta e_l$, hence we can write
  $x^\beta e_l=f_\beta^l+(x^\beta e_l -f_\beta^l)=f_\beta^l+\pi(x^\beta
  e_l)$.

  We conclude proving that
  $\langle F\rangle^A\cap \langle\mathcal N(U)_s\rangle^A=\{0^m_A\}$. Let
  $g\in \Sk^m_{\mathbf d}$ be an element belonging to
  $\langle F\rangle^A\cap \langle\mathcal N(U)_s\rangle^A$:
  $g=\sum_{f_\alpha^k\in F} \lambda_{\alpha k} f_\alpha^k\in \langle
  \mathcal N(U)_s \rangle$. Since the head terms of $f_\alpha^k$ cannot
  cancel each other, $\lambda_{\alpha k}=0$ for every $\alpha$ and $k$ and
  hence $g=0$.
\end{proof}

We specialize now to the case that $U$ is a quasi-stable module and
$T=\mathcal{P}(U)$ its Pommaret basis.  We study a reduction relation
naturally induced by any basis marked over such a set $T$.  In particular,
we show that it is confluent and Noetherian just as the familiar reduction
relation induced by a Gr\"obner basis.

\begin{definition}
Let $U\subseteq \Sk^m_{\mathbf d}$ be a quasi-stable module and $G$ be a $\mathcal{P}(U)$-marked set
in $\Sk^m_{\mathbf d}$. We introduce the following sets:
\begin{itemize}
\item $G^{(s)} := \left\{ x^\delta f_\alpha^k \ \big\vert\ f_\alpha^k \in G, x^\delta \in A[\mult{f_\alpha^{k}}], \deg x^\delta f_\alpha^k=s \right\};$
\item $\widehat{G}^{(s)}:=\left\{ x^\delta f_\alpha^k\ \big\vert\  f_\alpha^k \in G, x^\delta\notin  A[\mult{f_\alpha^{k}}], \deg x^\delta f_\alpha^k=s  \right\}=\left\{ x^\delta f_\alpha^k\ \big\vert\  f_\alpha^k \in G\right\}\setminus G^{(s)} $;
\item $\cN (U, \langle G\rangle):= \langle G\rangle \cap \langle \cN(U)\rangle$.
\end{itemize}

\end{definition}

\begin{lemma}\label{lem:modulelexorder}
Let $U\subseteq\Sk^m_{\mathbf d} $ be a quasi-stable module and $G$ a $\mathcal{P}(U)$-marked
set. For every product
$x^\delta f_\alpha^k$ with $f_\alpha^k\in G$,
each term in $\supp(x^\delta x^\alpha e_k - x^\delta f_\alpha^k)$ either
belongs to $\mathcal{N}(U)$ or is of the form $x^\eta  x^\nu e_l \in \mathcal C_{\mathcal P}^m(x^\nu e_l)$ with $x^\nu e_l \in \mathcal P(U)$ and   $x^\eta <_{lex} x^\delta$.
\end{lemma}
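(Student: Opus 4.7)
The plan is to unwind the marked module element and then reduce the module-theoretic statement to the ideal-level statement contained in Lemma \ref{potenze}\eqref{potenze_iii}, which is already available.

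By the definition of a $\mathcal{P}(U)$-marked set, we can write
\[
f_\alpha^k = x^\alpha e_k - h_\alpha^k
\]
with $h_\alpha^k \in \langle \mathcal{N}(U)\rangle$; that is, every term in $\supp(h_\alpha^k)$ lies in $\mathcal{N}(U)$. Consequently,
\[
x^\delta x^\alpha e_k - x^\delta f_\alpha^k = x^\delta h_\alpha^k,
\]
so every term in the left-hand support has the shape $x^\delta x^\gamma e_l$ for some $x^\gamma e_l \in \mathcal{N}(U)$, meaning $x^\gamma \in \mathcal{N}(J^{(l)})$ in the decomposition $U = \oplus_{l=1}^m J^{(l)} e_l$ from \eqref{eq:decMonMod}.

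Next I would split on whether $x^\delta x^\gamma e_l$ belongs to $\mathcal{N}(U)$ or to $U$. In the first case the conclusion is immediate. In the second case, $x^\delta x^\gamma \in J^{(l)}$, so by quasi-stability the Pommaret basis $\mathcal{P}(J^{(l)})$ covers $J^{(l)} \cap \mathbb T$ as a disjoint union of Pommaret cones; hence there exists a unique $x^\nu \in \mathcal{P}(J^{(l)})$ and a unique $x^\eta \in A[\mult{x^\nu}]$ with $x^\delta x^\gamma = x^\eta x^\nu$. In particular $x^\nu e_l \in \mathcal{P}(U)$ and $x^\delta x^\gamma e_l = x^\eta x^\nu e_l \in \mathcal{C}^m_{\mathcal P}(x^\nu e_l)$, matching the required form.

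It remains to check the lexicographic inequality $x^\eta <_{lex} x^\delta$. This is precisely the situation of Lemma \ref{potenze}\eqref{potenze_iii} applied to the quasi-stable ideal $J = J^{(l)}$: we have $x^\gamma \notin J^{(l)}$ (since $x^\gamma e_l \in \mathcal{N}(U)$), the product $x^\gamma \cdot x^\delta = x^\eta \cdot x^\nu$ lies in $J^{(l)}$, and the factorisation has $x^\nu \in \mathcal{P}(J^{(l)})$ with $x^\eta \in A[\mult{x^\nu}]$. Thus Lemma \ref{potenze}\eqref{potenze_iii} yields $x^\eta <_{lex} x^\delta$, finishing the proof. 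The only potential pitfall is purely bookkeeping, namely making sure that multiplying by $x^\delta$ preserves the component index $l$ and that the module-level Pommaret decomposition reduces componentwise to the ideal-level statement; both follow at once from the definition of divisibility in $\mathbb T^m$ and from \eqref{eq:decMonMod}.
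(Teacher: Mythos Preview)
Your proof is correct and follows essentially the same route as the paper: both reduce to the ideal case via the decomposition $U=\oplus_l J^{(l)}e_l$, locate the relevant term in a Pommaret cone of $\mathcal P(J^{(l)})$, and invoke Lemma~\ref{potenze}\eqref{potenze_iii} to obtain the lex inequality. Your write-up is slightly more explicit about why the tail terms lie in $\mathcal N(U)$, but the argument is otherwise identical.
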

\begin{proof}
  It is sufficient to consider
  $x^\delta x^\beta e_l\in \supp(x^\delta x^\alpha e_k-x^\delta
  f_\alpha^k)\cap U$. Then $x^\delta x^\beta \in J^{(l)}$ for some
  quasi-stable ideal $J^{(l)}\subset \Sk$ appearing in
  \eqref{eq:decMonMod}.  Therefore there exists
  $x^\gamma \in \mathcal{P}(J^{(l)})$ such that
  $x^\delta x^\beta\in \mathcal C_{\mathcal P}(x^\gamma)$. More precisely,
  if $x^\eta:=x^\delta x^\beta/x^\gamma$, then $x^\eta<_{lex} x^\delta$ by
  Lemma \ref{potenze} \eqref{potenze_iii}.
\end{proof}

Note in the next definition the use of the set $G^{(s)}$, which means that
we use here a gene\-ra\-li\-zation of the involutive reduction relation
associated with the Pommaret division and not of the standard reduction
relation in the theory of Gr\"obner bases.  This modification is the key
for circumventing the restrictions imposed by the results of \cite{RS}.  It
also entails that if a term is reducible, then there is only one element in
the marked basis which can be used for its reduction.

\begin{definition}\label{def:reduction}
Let $U\subseteq \Sk^m_{\mathbf d}$ be a quasi-stable module and $G$ a $\mathcal{P}(U)$-marked set. We
 denote by $\redg$ the transitive closure of the relation $h \redg h - \lambda x^\eta
f_\alpha^k$ where $x^\eta x^\alpha e_k$ is a term that appears in $h$ with
a non-zero coefficient $\lambda\in A$, which satisfies $\deg(x^\eta x^\alpha e_k)=s$ and $x^\eta f_\alpha^k\in G^{(s)}$. 
We will write $h \fullredg g$ if $h \redg g$ and $g\in \langle \mathcal{N}(U)\rangle$. Observe that if $h\in \left(\Sk^m_{\mathbf d}\right)_s$, then $h\redg g\in \left(\Sk^m_{\mathbf d}\right)_s$.
\end{definition}

\begin{proposition} \label{prop:ConfNoeth}Let $U\subseteq \Sk^m_{\mathbf d}$ be a quasi-stable module and $G$ a $\mathcal{P}(U)$-marked set.
The reduction relation $\redg$ is Noetherian.
\end{proposition}
\begin{proof}
It is sufficient to prove that for every term $x^\gamma e_k$ in $U$, there is $g\in \langle \mathcal{N}(U)\rangle^{A}$ such that $x^\gamma e_k \fullredg g$.

Since $x^\gamma e_k\in U$, there exists a unique $x^\delta f_\alpha^k\in
G^{(s)}$ such that $x^\delta \Ht(f_\alpha^k)=x^\gamma e_k$. Hence,
$x^\gamma e_k\redg x^\gamma e_k -x^\delta f_\alpha^k$. If we could proceed
in the reduction without ever obtaining an element in $\langle \mathcal
N(U)\rangle$, we would obtain by Lemma \ref{lem:modulelexorder} an infinite
lex-descending chain of terms in $\mathbb T$ which is impossible since lex
is a well-ordering. Hence $\redg$ is Noetherian. 
\end{proof}

\begin{corollary}\label{lem:markedlexorder}
Let $U\subseteq \Sk^m$ be a quasi-stable module and $G$ be a
$\mathcal{P}(U)$-marked set.  Every term $x^\beta e_k\in \mathbb T^m_s$ of
degree $s$ can be expressed in the form
\begin{equation}\label{eq:forma}
x^\beta e_l=\sum \lambda x^{\delta}f_{\alpha}^{k}+g,
\end{equation}
where $\lambda\in A\setminus\{0_A\}$,
$x^{\delta}f_{\alpha}^{k}\in G^{(s)}$, $g\in \langle
\mathcal{N}(U)\rangle^{A}$ and  the terms $x^{\delta}$ form a
sequence which is strictly descending with respect to lex.
\end{corollary}
\begin{proof}
For terms in $\mathcal N(U)$, there is nothing to prove. For $x^\beta e_l\in U$, it is sufficient to consider $g\in \langle \mathcal N(U)\rangle^{A}$ such that $x^\beta e_l\fullredg g$. The polynomials $x^{\delta}f_{\alpha}^{k}\in G^{(s)}$ are exactly those used during the reduction $\redg$. They fulfill the statement on the terms $x^{\delta}$ by Lemma \ref{lem:modulelexorder}.
\end{proof}

We now put an order on the polynomials $x^{\delta}f_{\alpha}^{k}\in G^{(s)}$, assuming that the polynomials in $G$ are ordered (in some way):
$x^\delta f_\alpha^k \prec x^{\delta'}f_{\alpha'}^{k'}$ if $f_{\alpha}$ is smaller then $f_{\alpha'}$ or if $f_{\alpha}^k=f_{\alpha'}^{k'}$ and $x^{\delta}<_{\mathrm lex}x^{\delta'}$.\\
In the following, when we say that a polynomial in a subset of $G^{(s)}$ is maximal, we refer to this order.

\begin{lemma}\label{lem:notzero}
Let $U\subseteq \Sk^m_{\mathbf d}$ be a quasi-stable module and $G$ be a $\mathcal{P}(U)$-marked set. 
Consider a homogeneous element $g\in \Sk^m_{\mathbf d}$ such that $h = \sum \lambda  x^{\delta}f_{\alpha}^{k}$, with $\lambda \in A
\setminus \{0\} $ and $x^{\delta}f_{\alpha}^{k}\in G^{(s)}$ with $s=\deg(h)$ and $x^{\delta}f_{\alpha}^{k}$ pairwise different. Then $h\neq  0_A^m$ and $h\notin \langle \mathcal N(U)\rangle^A$.
%and its support contains some terms of $U$.
\end{lemma}
\begin{proof}

Let $x^{\overline \delta}f_{\overline \alpha}^{\overline k}$  be the maximal polynomial of $G^{(s)}$ appearing in the summation $\sum \lambda  x^{\delta}f_{\alpha}^{k}=0$ with $\lambda\neq 0$.

Then, by Lemma \ref{lem:modulelexorder}, the term $x^{\overline \delta}x^{\overline \alpha} e_{\overline k}$, does not appear in the support of the other polynomials $x^{\delta} f_{\alpha}^k$ involved in the summation. Hence, $h\neq 0_A^m$. Furthermore,  $x^{\overline \delta}x^{\overline \alpha} e_{\overline k}\in U$ belongs to the support of $h$, hence $h$ does not belong to $\langle \mathcal N(U)\rangle$.
\end{proof}

\begin{proposition} \label{prop:Conf}Let $U\subseteq \Sk^m_{\mathbf d}$ be a quasi-stable module and $G$ a $\mathcal{P}(U)$-marked set.
The reduction relation $\redg$ is confluent.
\end{proposition}
\begin{proof}
Let $h$ be a polynomial in $\Sk^m$ and we reduce it twice with $\redg$, following different paths along the reduction: $h\fullredg g_1\in\langle \mathcal N(U)\rangle$ and $h\fullredg g_2\in\langle \mathcal N(U)\rangle$.
By Corollary \ref{lem:markedlexorder} applied on the terms of the suport of $h$, 
\begin{equation}\label{eq:confluency}
h= \sum \lambda x^{\delta}f_{\alpha}^{k}+g_1=\sum \mu x^{\delta}f_{\alpha}^{k}+g_2. 
\end{equation}

Then $g_1-g_2=\sum (\lambda-\mu)x^{\delta}f_{\alpha}^{k}$. If there is $\lambda-\mu\in A\setminus\{0\}$, then by Lemma \ref{lem:notzero}, $g_1-g_2\notin \langle\mathcal N(U)\rangle$, against the hypothesis. Then $\lambda=\mu$ for every $x^\delta f_\alpha^k \in G^{(s)}$ in \eqref{eq:confluency} and $g_1=g_2$.
\end{proof}

\begin{corollary}\label{cor:unicity}
Let $U\subseteq \Sk^m$ be a quasi-stable module and $G$ be a
$\mathcal{P}(U)$-marked set.  Every term $x^\beta e_k\in \mathbb T^m_s$ of
degree $s$ has a unique form of the type in \eqref{eq:forma}.
\end{corollary}

The following Theorem and Corollaries collect some basic properties of sets
marked over a Pommaret basis.  They generalize analogous statements in
\cite[Theorems 1.7, 1.10]{LR2} which consider only ideals and marked bases
where the head terms generate a strongly stable ideal.

\begin{theorem}\label{th:markedSetChar}
Let $U\subset \Sk^m_{\mathbf d}$ be a quasi-stable module, with $q(s):=\rk(U_{s})$, and  $G$ a
$\mathcal{P}(U)$-marked set.  Then, we have for every degree $s$ the
following decompositions of $A$-modules:
\begin{enumerate}[(i)]
\item\label{it:markedSetChar_i} $\langle G\rangle_s = \langle G^{(s)} \rangle^A + \langle \widehat{G}^{(s)} \rangle^A $;
\item\label{it:markedSetChar_ii} $(A[\mathbf x]^m_{\mathbf d})_s =\langle G^{(s)}\rangle^A\oplus\langle\cN(U)_s\rangle^A$;
\item\label{it:markedSetChar_iii} the $A$-module $\big\langle G^{(s)} \big\rangle^A$ is free of rank equal to $\vert G^{(s)} \vert =\rank(U_s)$ and it is generated (as an $A$-module) by a  unique $U_s\cap \mathbb T^m$-marked set $\widetilde{G}^{(s)}$;
\item\label{it:markedSetChar_iv} $\langle G\rangle_s= \big\langle {G}^{(s)} \big\rangle^A \oplus  \cN (U, \langle G \rangle)_s  $.
\end{enumerate}
Moreover,   the following conditions are equivalent:
\begin{enumerate}[(i)]\setcounter{enumi}{4}
\item\label{it:markedSetChar_v} $G$ is a $\mathcal P(U)$-marked basis;
  \item\label{it:markedSetChar_vi} for all degrees $s$, $\langle G\rangle_s=\big\langle {G}^{(s)}\big\rangle^A $;
\item\label{it:markedSetChar_vii}  $\cN(U,\langle G\rangle)=\{0^m_A\}$;
 \item\label{it:markedSetChar_ix} for all $s$, $\bigwedge^{q(s)+1} \langle G\rangle_{s}=0_A$.
\end{enumerate}
\end{theorem}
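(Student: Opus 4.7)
The plan is to establish the four decompositions (i)--(iv) directly from the reduction machinery already in place, and then derive the equivalences (v)--(viii) from them. Part (i) is bookkeeping: by definition $\langle G\rangle_s$ is the $A$-module spanned by all products $x^\delta f_\alpha^k$ of total degree $s$ with $f_\alpha^k\in G$ and $x^\delta$ any term, and the multiplicative/nonmultiplicative dichotomy partitions these products into $G^{(s)}$ and $\widehat G^{(s)}$. Part (ii) is the heart: Proposition \ref{lem:markedlexorder} writes every term of $\mathbb T^m_s$ in the form $\sum\lambda_i x^{\delta_i}f_{\alpha_i}^{k_i}+g$ with the combination in $\langle G^{(s)}\rangle^A$ and $g\in\langle\cN(U)_s\rangle^A$, so the sum $\langle G^{(s)}\rangle^A+\langle\cN(U)_s\rangle^A$ exhausts $(\Sk^m_{\mathbf d})_s$; directness follows from Corollary \ref{lem:notzero}, which forbids a nonzero $A$-combination of pairwise distinct elements of $G^{(s)}$ from lying in $\langle\cN(U)\rangle^A$.

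For (iii), the head terms $x^\delta x^\alpha e_k$ of the elements of $G^{(s)}$ run bijectively through $U_s\cap\mathbb T^m$ by the disjoint Pommaret cone decomposition, whence $|G^{(s)}|=\rk(U_s)=Q(s)$ and the same head-term argument yields $A$-linear independence, so $G^{(s)}$ is an $A$-basis of the free module $\langle G^{(s)}\rangle^A$. To produce $\widetilde G^{(s)}$, I use (ii) to write each $x^\alpha e_k\in U_s\cap\mathbb T^m$ uniquely as $\widetilde f_\alpha^k+g_\alpha^k$ with $\widetilde f_\alpha^k\in\langle G^{(s)}\rangle^A$ and $g_\alpha^k\in\langle\cN(U)_s\rangle^A$; the set $\widetilde G^{(s)}:=\{\widetilde f_\alpha^k\}$ is the required $U_s\cap\mathbb T^m$-marked set (equivalently, apply Lemma \ref{lem:markedOnMon} to $\langle G^{(s)}\rangle^A$, whose hypothesis is furnished by (ii)), and uniqueness is built into the direct sum decomposition. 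Part (iv) is immediate from (ii): the intersection $\langle G^{(s)}\rangle^A\cap\cN(U,\langle G\rangle)_s$ is contained in $\langle G^{(s)}\rangle^A\cap\langle\cN(U)_s\rangle^A=\{0\}$, and any $h\in\langle G\rangle_s$ decomposes via (ii) as $h_1+h_2$ with $h_1\in\langle G^{(s)}\rangle^A\subseteq\langle G\rangle_s$, so that $h_2=h-h_1\in\langle G\rangle_s\cap\langle\cN(U)\rangle^A=\cN(U,\langle G\rangle)_s$.

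With (i)--(iv) in hand, the equivalence (v) $\Leftrightarrow$ (vi) follows by comparing the definition of a marked basis, $(\Sk^m_{\mathbf d})_s=\langle G\rangle_s\oplus\langle\cN(U)_s\rangle^A$, with the decomposition (ii) and the inclusion $\langle G^{(s)}\rangle^A\subseteq\langle G\rangle_s$; the equivalence (vi) $\Leftrightarrow$ (vii) is exactly the content of (iv). The delicate step I expect to be the main obstacle is (vi) $\Leftrightarrow$ (viii): the forward direction is clear since a free $A$-module of rank $Q(s)$ kills its $(Q(s)+1)$-th exterior power. For the converse I would pick any nonzero $w\in\cN(U,\langle G\rangle)_s$ and consider $\widetilde f_{\alpha_1}^{k_1}\wedge\cdots\wedge\widetilde f_{\alpha_{Q(s)}}^{k_{Q(s)}}\wedge w$ in $\bigwedge^{Q(s)+1}\langle G\rangle_s$, where the $\widetilde f_{\alpha_i}^{k_i}$ form the $A$-basis of $\langle G^{(s)}\rangle^A$ from (iii). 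Pushing this wedge via the inclusion $\langle G\rangle_s\hookrightarrow(\Sk^m_{\mathbf d})_s$ into the exterior power of the ambient free $A$-module, and expanding $w$ in the global $A$-basis supplied by (ii) (namely $\widetilde G^{(s)}\cup\cN(U)_s$), the image becomes a visibly nonzero element of a free $A$-module, contradicting (viii). The care needed here is to ensure that the nontrivial coefficients of $w$ against $\cN(U)_s$ survive the wedge with the full basis of $\langle G^{(s)}\rangle^A$, which is exactly what the direct sum decomposition (ii) guarantees.
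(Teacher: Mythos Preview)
Your proposal is correct and follows essentially the same route as the paper: item (i) is immediate, item (ii) comes from Proposition~\ref{lem:markedlexorder} and Corollary~\ref{lem:notzero}, item (iii) is obtained via Lemma~\ref{lem:markedOnMon} together with (ii), and the equivalences (v)--(vii) follow formally from (ii) and (iv). Two minor remarks: your derivation of (iv) directly from (ii) is in fact slightly cleaner than the paper's, which routes through $\widehat G^{(s)}$ and $\widetilde G^{(s)}$; and for the converse in (viii) the paper argues more abstractly via the direct-sum decomposition $\bigwedge^{Q(s)+1}\bigl(\langle G^{(s)}\rangle^A\oplus\cN(U,\langle G\rangle)_s\bigr)\supseteq\bigwedge^{Q(s)}\langle G^{(s)}\rangle^A\otimes\cN(U,\langle G\rangle)_s\cong\cN(U,\langle G\rangle)_s$, whereas your explicit wedge $\widetilde f_{\alpha_1}^{k_1}\wedge\cdots\wedge\widetilde f_{\alpha_{Q(s)}}^{k_{Q(s)}}\wedge w$ amounts to the same thing unwound in coordinates. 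One small caution: in (iii) the phrase ``head-term argument'' for linear independence of $G^{(s)}$ is a bit loose, since tails of $x^\delta f_\alpha^k$ can hit head terms of other elements; what actually does the work is Corollary~\ref{lem:notzero} (equivalently the lex-descent in Lemma~\ref{lem:modulelexorder}), which you already invoked for (ii).
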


\begin{proof}
Item \eqref{it:markedSetChar_i}: immediate.

Item \eqref{it:markedSetChar_ii} is a consequence of Corollary \ref{cor:unicity}.

Item \eqref{it:markedSetChar_iii}:
we use the arguments of \cite[Theorem 1.7]{LR2} for the ideal
case: by \eqref{it:markedSetChar_ii} we have the short exact sequence
\begin{equation*}
0 \longrightarrow \langle G^{(s)} \rangle \hookrightarrow (A[\mathbf x]^m_
{\mathbf d})_s
\xrightarrow{\; \pi \;} \langle \cN(U)_s \rangle \longrightarrow 0\;.
\end{equation*}
For each $x^\alpha e_k$ in $U_s$ we compute the image $\pi(x^\alpha e_k) =
\sum_{x^\beta e_l \in \cN(U)_s} a_{\alpha\beta k l} x^\beta e_l$ and consider
the set $\tilde{G}^{(s)} := \{\tilde{f}_\alpha^k := x^\alpha - \sum_{x^\beta e_l
\in \cN(U)_s} a_{\alpha\beta k l} x^\beta e_l \mid x^\alpha e_k \in U_s\} \subseteq \ker \pi =
\langle G^{(s)} \rangle$. Let $U^\prime := \langle U_s \rangle$. By construction,
$\tilde{G}^{(s)}$ is a $U^\prime$-marked set with $\Ht(\tilde{f}_\alpha^k) =
x^\alpha e_k$. Applying \eqref{it:markedSetChar_ii} to this $U^\prime$-marked
set, we have $\langle \tilde{G}^{(s)} \rangle + \langle \cN(U^\prime)_s \rangle
= (A[\mathbf x]^m_{\mathbf d})_s$.

Finally, since the $A$-module generated by $\tilde{G}^{(s)}$ is contained in
$\langle G^{(s)} \rangle$ and $\cN(U)_s = \cN(U^\prime)_s$, the modules
$\langle \tilde{G}^{(s)} \rangle$ and $\langle G^{(s)} \rangle$ coincide. Note
that $\tilde{G}^{(s)}$  is marked on the monomial module $U^\prime$ which is
generated by $U_s$, but does not need to be a $U_{\geq s}$-marked set, since
$U_ {\geq s}$ may have minimal generators of degree greater than $s$.

Item \eqref{it:markedSetChar_iv}: by items \eqref{it:markedSetChar_i} and
\eqref{it:markedSetChar_iii}, we have $\langle G\rangle_s=\langle
\widetilde{G}^{(s)}\rangle^A +\langle
\widehat{G}^{(s)}\rangle^A$. Recalling that $\langle
\widetilde{G}^{(s)}\rangle^A\cap \langle \mathcal
N(U)_s\rangle^A=\{0_A^m\}$ by Lemma \ref{lem:markedOnMon}, it is sufficient
to show that every $g\in \langle \widehat{G}^{(s)}\rangle^A$ can be
written $g=f+h$ with $f\in \langle \widetilde{G}^{(s)}\rangle^A$ and $h \in
\langle \mathcal N(U)_s\rangle^A$: we express every term $x^\beta e_l\in
U_s$ appearing in $g$ with non-zero coefficient in the form $x^\beta e_l=\widetilde{f}^l_\beta+(x^\beta e_l-\widetilde{f}^l_\beta)$ where $\widetilde{f}^l_\beta$ is the unique polynomial in $\widetilde{G}^{(s)}$ with $\Ht(\widetilde{f}^l_\beta)=x^\beta e_l$. By construction, $h\in \mathcal N(U,\langle G\rangle)_s$. By item \eqref{it:markedSetChar_ii}, we obtain the assertion.

Items \eqref{it:markedSetChar_v}, \eqref{it:markedSetChar_vi},
\eqref{it:markedSetChar_vii} are equivalent by the previous items. In fact these
properties are a rephrasing of the definition of $\mathcal{P}(U)$-marked basis.

With respect to \cite{LR2}, the only new item is \eqref{it:markedSetChar_ix}, which  is obviously equivalent to \eqref{it:markedSetChar_vi} and
\eqref{it:markedSetChar_vii}.
In fact,  by   \eqref{it:markedSetChar_iii} and
\eqref{it:markedSetChar_iv} we find that $\langle G\rangle_{s}=\big\langle {G}^{(s)}\big\rangle^A \oplus
\cN (U, \langle G\rangle)_{s} $
and $\rank \big\langle {G}^{(s)}\big\rangle^A =\rk (U_{s})=q(s)$.
\end{proof}

\begin{remark}\label{rem:HFunc}
 If $G\subset \Sk^m$ is a $\mathcal P(U)$-marked basis, then, by Theorem \ref{th:markedSetChar} \eqref{it:markedSetChar_ii}, \eqref{it:markedSetChar_iii}  and \eqref{it:markedSetChar_vi},   the $\Sk$-module $\langle G\rangle$ admits a Hilbert function, which is the same as  the Hilbert
  function of the monomial module $U$.
\end{remark}

\begin{corollary}\label{cor:finiteCond} Let $U\subset \Sk^m_{\mathbf d}$ be a quasi-stable module and  $G$ be a $\mathcal{P}(U)$-marked set. 
The following conditions are equivalent:
\begin{enumerate}[(i)]
\item \label{finiteCond_i}$G$ is a $\mathcal P(U)$-marked basis;
\item \label{finiteCond_ii}$\langle G\rangle_s=\big\langle {G}^{(s)}\big\rangle^A $ for every $s\leq \reg(U) + 1$;
\item \label{finiteCond_iii}$\cN(U,\langle G\rangle)_s=\{0^m_A\}$ for every $s\leq \reg(U) + 1$;
\item \label{finiteCond_iv}$\bigwedge^{q(s)+1} \langle G\rangle_{s}=0_A$ for every $s\leq \reg(U) + 1$ .
\end{enumerate}
\end{corollary}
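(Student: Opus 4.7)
For each fixed degree $s$, conditions \eqref{finiteCond_ii}--\eqref{finiteCond_iv} are pairwise equivalent by direct linear algebra on the decomposition $\langle G\rangle_s=\langle G^{(s)}\rangle^A\oplus\mathcal{N}(U,\langle G\rangle)_s$ provided by Theorem~\ref{th:markedSetChar}\eqref{it:markedSetChar_iii}--\eqref{it:markedSetChar_iv}, since $\langle G^{(s)}\rangle^A$ is $A$-free of rank $Q(s)$. The implication \eqref{finiteCond_i}$\Rightarrow$\eqref{finiteCond_ii} follows at once from Theorem~\ref{th:markedSetChar}.

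For the converse, my plan is to upgrade the finite-degree statement to its infinite version \eqref{it:markedSetChar_vi} of Theorem~\ref{th:markedSetChar}, whence \eqref{finiteCond_i} follows. Set $r=\reg(U)$ and argue by induction on $s$ that $\langle G\rangle_s=\langle G^{(s)}\rangle^A$, with base case $s\leq r+1$ being the hypothesis. For the inductive step at $s\geq r+2$, Lemma~\ref{potenze}\eqref{importante_00} gives $\deg f\leq r$ for every $f\in G$, so $\langle G\rangle_s=A[\mathbf{x}]_1\cdot\langle G\rangle_{s-1}=\sum_{j=0}^n x_j\,\langle G^{(s-1)}\rangle^A$ by the inductive hypothesis. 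Since $\langle G^{(s)}\rangle^A\subseteq\langle G\rangle_s$ always, it suffices to show that $x_j\cdot q\in\langle G^{(s)}\rangle^A$ for every $q=x^\delta f_\alpha^k\in G^{(s-1)}$ and every variable $x_j$. The case $x_j\in\mult{f_\alpha^k}$ is immediate from the definition of $G^{(s)}$; the substantive case is $x_j\in\nmult{f_\alpha^k}$.

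For this case I would invoke the hypothesis at the low degree $\deg f_\alpha^k+1\leq r+1$ to write $x_j f_\alpha^k=\sum_l c_l\,x^{\delta_l}f_{\beta_l}^{k_l}$ with each $x^{\delta_l}f_{\beta_l}^{k_l}\in G^{(\deg f_\alpha^k+1)}$, then multiply by $x^\delta$ to get $x_j q=\sum_l c_l\,x^\delta x^{\delta_l}f_{\beta_l}^{k_l}$. For the summand whose head matches the head $x_j x^\alpha e_k$ of $x_j f_\alpha^k$, Lemma~\ref{potenze}\eqref{potenze_iii} applied to $x^\alpha/\min(x^\alpha)\notin J^{(k)}$ (the non-membership itself following from disjointness of Pommaret cones at $x^\alpha$) forces $\min(x^{\beta_l})\geq\min(x^\alpha)$; hence $x^\delta\in A[\mult{f_{\beta_l}^{k_l}}]$ and the summand lies directly in $G^{(s)}$.

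The main obstacle is to handle those summands whose head appears from multiplying tail terms of $f_\alpha^k$ by $x_j$: here $\min(x^{\beta_l})$ may be strictly smaller than $\min(x^\alpha)$, so $x^\delta x^{\delta_l}f_{\beta_l}^{k_l}$ lies in $\widehat{G}^{(s)}$ rather than $G^{(s)}$. To absorb such summands into $\langle G^{(s)}\rangle^A$, I would pick a variable $x_m$ dividing $x^\delta$ with $x_m>\min(x^{\beta_l})$, factor the summand as $x_m\bigl(x^{\delta/x_m}x^{\delta_l}f_{\beta_l}^{k_l}\bigr)$, apply the inductive hypothesis to rewrite the degree-$(s-1)$ factor as an $A$-combination of elements of $G^{(s-1)}$, and iterate on each $x_m$-times-generator that lands back in $\widehat{G}^{(s)}$. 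Termination of this recursion is secured by the strict lex-descent of multipliers from Lemma~\ref{potenze}\eqref{potenze_iii} together with the Noetherianity of the reduction relation $\redg$ from Proposition~\ref{prop:ConfNoeth}.
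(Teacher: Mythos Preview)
Your overall strategy---induction on $s$, with a secondary recursion that pushes each summand landing in $\widehat{G}^{(s)}$ back into $\langle G^{(s)}\rangle^A$---is sound and is in line with what the paper intends (it defers this direction to \cite[Theorem~1.10]{LR2}). The degree-wise equivalences among \eqref{finiteCond_ii}--\eqref{finiteCond_iv} and the trivial implication \eqref{finiteCond_i}$\Rightarrow$\eqref{finiteCond_ii} are handled correctly.

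The gap is in your termination argument for the recursion. Neither Lemma~\ref{potenze}\eqref{potenze_iii} nor Proposition~\ref{prop:ConfNoeth} controls the process you describe: you are not running $\redg$, and the lex-descent in that lemma tracks a different quantity. What actually decreases is the \emph{index of the non-multiplicative variable being pushed through}. At the top level you treat $x_j q$ with $x_j\in\nmult{f_\alpha^k}$; a bad summand is factored as $x_m\cdot(\text{degree }s{-}1\text{ element})$ with $x_m=\max(x^\delta)\leq\min(x^\alpha)<x_j$, and after rewriting the factor in $\langle G^{(s-1)}\rangle^A$ via the outer hypothesis you face problems of the same shape $x_m q'$ with $q'=x^{\mu_p}f_{\gamma_p}^{k_p}$ and $x_m\in\nmult{f_{\gamma_p}^{k_p}}$. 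The identical inequality then gives $x_{m'}=\max(x^{\mu_p})\leq\min(x^{\gamma_p})<x_m$ at the next level. Hence the recursion has depth at most $n$, and the clean way to write the proof is a double induction: outer on $s$, inner on the variable index. Your separate analysis of the ``head summand'' is also slightly off---Lemma~\ref{potenze}\eqref{potenze_iii} does not directly yield $\min(x^{\beta_l})\geq\min(x^\alpha)$; the simpler reason is that $x^{\beta_l}$ divides $x_j x^\alpha$, whose minimal variable is $\min(x^\alpha)$---but this digression is harmless, since that summand can be treated uniformly with the others.
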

\begin{proof}
By the second part of Theorem \ref{th:markedSetChar}, item
\eqref{finiteCond_i} implies item \eqref{finiteCond_ii} and items
\eqref{finiteCond_ii}, \eqref {finiteCond_iii} and \eqref{finiteCond_iv} are
equivalent.

For the proof that item \eqref{finiteCond_ii} implies
\eqref{finiteCond_i}, we follow the arguments used in \cite[Theorem 1.10]{LR2} and
adapt them to the module case. We have to prove that $(A[\mathbf x]^m_
{\mathbf d})_s = \langle G \rangle_s \oplus \langle \cN(U)_s \rangle$ for
every $s$. This is true for $s \leq m+1$ by hypothesis. By Theorem
\ref{th:markedSetChar} \eqref {it:markedSetChar_ii},
\eqref{it:markedSetChar_iii}, we know that $(A[\mathbf x]^m_{\mathbf d})_s =
\langle G^{(s)} \rangle \oplus \langle \cN(U)_s \rangle$ and $\langle G^{(s)}
\rangle \subseteq \langle G \rangle_s$, so that we have to prove $\langle G
\rangle_s \subseteq \langle G^{(s)} \rangle$. Let us assume that this is not
true and let $t$ be the minimal degree for which $\langle G \rangle_t
\not\subseteq \langle G^{(t)} \rangle$. Note that $t \geq m+2 > m$ and
$\langle G \rangle_t = x_0 \langle G \rangle_{t-1} + \dots + x_n \langle G
\rangle_{t-1}$. Since $\langle G \rangle_{t-1} = \langle G^{(t-1)} \rangle$,
there must exist a variable $x_i$ such that $x_i \langle G \rangle_{t-1}
\not\subseteq \langle G^{(t)} \rangle$ or equivalently $x_i \langle G^{(t-1)}
\rangle \not\subseteq \langle G^{(t)} \rangle$. Assume that the index $i$ is
minimal with this property and take a polynomial $x^\delta f_\alpha^k \in G^{(t-1)}$ with
$x^\alpha e_k = \Ht(f_\alpha^k) \in \mathcal{P}(U)$ such that $x_i x^\delta
f_\alpha^k \notin \langle G^{(t)} \rangle$. The variable $x_i$ has to be
greater than $\min(x^\alpha)$, since otherwise $x_i x^\delta f_\alpha^k \in
G^{(t)}$. Morevover $|\delta| > 0$ since $t-1 > m$. Let $x_j = \max(x^\delta)
\leq \min (x^\alpha) < x_i$ and $x^{\delta^\prime} = \frac{x^\delta}{x_i}$.
The polynomial is contained in $\langle G \rangle_{t-1}$ while $x_j(x_i
x^{\delta^\prime} f_\alpha^k) = x_i x^\delta f_\alpha^k$ is not contained in
$\langle G^{(t-1)} \rangle$, contradicting the minimality of $i$.
\end{proof}

\begin{corollary}
 Let $U\subset \Sk^m_{\mathbf d}$ be a quasi-stable module, such that $U=\oplus J^{(k)} e_k$ with $J^{(k)}$ saturated ideal for every $k$,  and  $G$ be a $\mathcal{P}(U)$-marked set. Then the following conditions are equivalent:
\begin{enumerate}[(i)]
\item\label{cor:markedSetSat_0}$G$ is a $\mathcal P(U)$-marked basis;
\item\label{cor:markedSetSat_i} $\langle G\rangle_{\reg(U)+1}=\big\langle {G}^{(\reg(U)+1)}\big\rangle^A $;
\item\label{cor:markedSetSat_ii}  $\cN(U,\langle G\rangle)_{\reg(U)+1}=\{0^m_A\}$;
\item\label{cor:markedSetSat_iii} $\bigwedge^{Q+1} \langle G\rangle_{\reg(U)+1}=0_A$, where $Q:=\rk(U_{\reg(U)+1})$.
\end{enumerate}
\end{corollary}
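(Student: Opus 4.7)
The plan is to reduce the statement to Corollary \ref{cor:finiteCond} by propagating the single-degree hypothesis downward using multiplication by $x_0$. The implication \eqref{cor:markedSetSat_0} $\Rightarrow$ \eqref{cor:markedSetSat_i} is immediate from Theorem \ref{th:markedSetChar} \eqref{it:markedSetChar_vi}, and the equivalences among \eqref{cor:markedSetSat_i}, \eqref{cor:markedSetSat_ii}, \eqref{cor:markedSetSat_iii} at the single degree $\reg(U)+1$ follow from the decomposition in Theorem \ref{th:markedSetChar} \eqref{it:markedSetChar_iv} together with the rank computation in \eqref{it:markedSetChar_iii}, applied at that single degree (verbatim the argument given at the end of the proof of Theorem \ref{th:markedSetChar}).

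The heart of the matter is the direction \eqref{cor:markedSetSat_ii} $\Rightarrow$ \eqref{cor:markedSetSat_0}. My first step would be to prove the ancillary fact that $x_0 \cdot \cN(U) \subseteq \cN(U)$. Given $x^\beta \in \cN(J^{(k)})$, suppose by contradiction that $x_0 x^\beta \in J^{(k)}$. By Lemma \ref{potenze} \eqref{potenze_ii}, either $x_0 x^\beta \in \mathcal P(J^{(k)})$ or $x_0 \in \nmult{x^\beta}$. The first alternative is ruled out by Lemma \ref{thm:PBSat} \eqref{PBSat_iii}, since each $J^{(k)}$ is saturated and $x_0 x^\beta$ is divisible by $x_0$. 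The second alternative is impossible because $x_0$ is the smallest variable, hence $x_0 \leq \min(x^\beta)$ and $x_0 \in \mult{x^\beta}$. Thus $x_0 x^\beta \in \cN(J^{(k)})$; summing over $k$ gives $x_0 \cdot \cN(U) \subseteq \cN(U)$.

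Next, for any $h \in \cN(U, \langle G\rangle)_s$, the element $x_0 h$ lies in $\langle G\rangle_{s+1}$ trivially and in $\langle \cN(U)\rangle$ by the previous fact, so $x_0 h \in \cN(U, \langle G\rangle)_{s+1}$. Iterating, $x_0^{\reg(U)+1-s} h \in \cN(U, \langle G\rangle)_{\reg(U)+1} = \{0^m_A\}$. Since $x_0$ is a non-zero-divisor in $\Sk^m_{\mathbf d}$, this forces $h = 0^m_A$; hence $\cN(U, \langle G\rangle)_s = \{0^m_A\}$ for every $s \leq \reg(U)+1$. An application of Corollary \ref{cor:finiteCond} \eqref{finiteCond_iii} $\Leftrightarrow$ \eqref{finiteCond_i} then concludes that $G$ is a $\mathcal P(U)$-marked basis.

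The main obstacle is isolating and proving the ancillary fact $x_0 \cdot \cN(U) \subseteq \cN(U)$: this is the only step that genuinely uses the saturation hypothesis, and it is precisely what turns "multiply by $x_0$" into an injective map from $\cN(U,\langle G\rangle)_s$ into $\cN(U,\langle G\rangle)_{s+1}$. Everything else is formal reduction to Corollary \ref{cor:finiteCond} via descending induction on the degree.
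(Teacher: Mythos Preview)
Your proof is correct and follows essentially the same route as the paper: both reduce to Corollary \ref{cor:finiteCond} by showing that multiplication by $x_0$ maps $\cN(U,\langle G\rangle)_s$ into $\cN(U,\langle G\rangle)_{s+1}$, invoking Lemma \ref{thm:PBSat} \eqref{PBSat_iii} and Lemma \ref{potenze} \eqref{potenze_ii} for the saturated case. You have simply made explicit the ancillary fact $x_0\cdot\cN(U)\subseteq\cN(U)$ that the paper only cites, which is a welcome clarification.
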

\begin{proof}
The equivalence among items \eqref{cor:markedSetSat_i}, \eqref{cor:markedSetSat_ii} and \eqref{cor:markedSetSat_iii} is immediate by Theorem \ref{th:markedSetChar}. We only prove that items \eqref{cor:markedSetSat_0} and \eqref{cor:markedSetSat_ii} are equivalent. If $G$ is a $\mathcal P(U)$-marked basis, then   by Theorem \ref{th:markedSetChar} we have $\cN(U,\langle G\rangle)_{\reg(U)+1}=\{0^m_A\}$.

We now assume that $\cN(U,\langle G\rangle)_{\reg(U)+1}=\{0^m_A\}$ and prove that $\cN(U,\langle G\rangle)=\{0^m_A\}$ . By Corollary \ref{cor:finiteCond}, it is sufficient to prove that  $\cN(U,\langle G\rangle)_{s}=\{0^m_A\}$  for every $s\leq \reg(U)$. If $f\in \cN(U,\langle G\rangle)_{s}$, with $s\leq \reg(J)$, then $x_0^{\reg(U)+1-s}f\in \cN(U,\langle G\rangle)_{\reg(U)+1}$,    by  Lemma \ref{potenze} \eqref{importante_0} and \eqref{potenze_ii} applied to $U$. Hence $f=0_A^m$.
\end{proof}

\begin{corollary}\label{LemmaCri}
Let $U\subset \Sk^m_{\mathbf d}$ be a quasi-stable module and $W\subset \Sk^m_{\mathbf d}$ be a finitely generated graded submodule such that $(\Sk^m_{\mathbf d})_s= W_s\oplus \langle \mathcal N(U)_s\rangle^A$ for every $s$. Then $W$ is generated by a $\mathcal P(U)$-marked basis.
\end{corollary}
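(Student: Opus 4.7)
\medskip

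The plan is to construct an explicit $\mathcal P(U)$-marked set $G\subset W$ by lifting each Pommaret generator of $U$ through the given decomposition, and then to use Theorem~\ref{th:markedSetChar} to upgrade $G$ to a marked basis. Concretely, for every $x^\alpha e_k\in \mathcal P(U)$, of degree $s_\alpha$ say, the hypothesis $(\Sk^m_{\mathbf d})_{s_\alpha}= W_{s_\alpha}\oplus \langle \mathcal N(U)_{s_\alpha}\rangle^A$ provides a unique $A$-linear projection $\pi_{s_\alpha}\colon (\Sk^m_{\mathbf d})_{s_\alpha}\to \langle \mathcal N(U)_{s_\alpha}\rangle^A$ with kernel $W_{s_\alpha}$. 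I set
\[
f_\alpha^k:=x^\alpha e_k-\pi_{s_\alpha}(x^\alpha e_k),
\]
so that $f_\alpha^k\in W_{s_\alpha}$, the coefficient of $x^\alpha e_k$ in $f_\alpha^k$ is $1_A$, and every other term of $f_\alpha^k$ lies in $\mathcal N(U)$. Hence $G:=\{f_\alpha^k\mid x^\alpha e_k\in \mathcal P(U)\}$ is a $\mathcal P(U)$-marked set contained in $W$.

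Next I would verify that $G$ is a $\mathcal P(U)$-marked basis. Since $G\subset W$, I have $\langle G\rangle_s\subseteq W_s$ for every $s$, and therefore
\[
\langle G\rangle_s\cap \langle \mathcal N(U)_s\rangle^A\subseteq W_s\cap \langle \mathcal N(U)_s\rangle^A=\{0_A^m\}
\]
by the hypothesised direct sum decomposition. On the other hand, Theorem \ref{th:markedSetChar}\,\eqref{it:markedSetChar_ii} applied to the marked set $G$ gives $(\Sk^m_{\mathbf d})_s=\langle G^{(s)}\rangle^A\oplus \langle \mathcal N(U)_s\rangle^A$, and $\langle G^{(s)}\rangle^A\subseteq \langle G\rangle_s$. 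Combining these two facts yields
\[
(\Sk^m_{\mathbf d})_s=\langle G\rangle_s\oplus \langle \mathcal N(U)_s\rangle^A,
\]
which is precisely the defining condition for $G$ to be a $\mathcal P(U)$-marked basis.

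It remains to check the equality $\langle G\rangle=W$. The inclusion $\langle G\rangle\subseteq W$ is built-in. Conversely, for $w\in W_s$ write $w=g+n$ with $g\in \langle G\rangle_s$ and $n\in \langle \mathcal N(U)_s\rangle^A$, using the direct sum just established; then $n=w-g\in W_s\cap \langle \mathcal N(U)_s\rangle^A=\{0_A^m\}$, so $w=g\in \langle G\rangle_s$. Hence $W=\langle G\rangle$, completing the argument.

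There is no genuinely hard step here: the whole proof is driven by the interplay between the hypothesised decomposition and the general decomposition of Theorem~\ref{th:markedSetChar}\,\eqref{it:markedSetChar_ii}. The only point that requires care is verifying that the lift $f_\alpha^k$ really is a marked module element in the sense of the definition (coefficient $1_A$ on the head term, tail supported in $\mathcal N(U)$), which is guaranteed by construction because $\pi_{s_\alpha}$ takes values in the $A$-span of $\mathcal N(U)_{s_\alpha}$ and $x^\alpha e_k\notin \mathcal N(U)$.
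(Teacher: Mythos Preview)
Your proof is correct and follows essentially the same approach as the paper: both construct the marked set $G$ by projecting each $x^\alpha e_k\in\mathcal P(U)$ onto $\langle\mathcal N(U)\rangle^A$ along $W$, and both then invoke Theorem~\ref{th:markedSetChar}\,\eqref{it:markedSetChar_ii} to compare the two direct-sum decompositions of $(\Sk^m_{\mathbf d})_s$. The paper's writeup is a bit terser, deducing directly $W_s=\langle G^{(s)}\rangle^A$ from the equality of complements and then using $\langle G^{(s)}\rangle^A\subseteq\langle G\rangle_s\subseteq W_s$, but this is only a cosmetic rearrangement of your argument.
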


\begin{proof} The statement is an easy consequence   of Theorem  \ref{th:markedSetChar} as soon as we define a $\mathcal P(U)$-marked set  generating $W$.

 By the hypotheses, for every degree $s$ and every monomial $x^\alpha e_k \in
 \mathcal P(U)$  there is a unique element $h_\alpha^k \in \langle \mathcal N
 (U)_s\rangle^A $ such that  $  x^\alpha e_k -h_\alpha^k \in W_s$.

 The collection  $G$   of  the elements $  x^\alpha e_k -h_\alpha^k$ is obviously a $\mathcal P(U)$-marked set and generates a graded  submodule of $W$. Moreover,  $(\Sk^m_{\mathbf d})_s=W_s\oplus \langle \mathcal N(U)_s\rangle^A= \langle G^{(s)}\rangle^A\oplus \langle \mathcal N(U)_s\rangle^A$.  Therefore,  $W_s=\langle G^{(s)}\rangle^A\subseteq  G_s\subseteq  W_s$, so that $G$ generates $W$ as a graded $\Sk$-module.
\end{proof}

Finally, we give an algorithmic  method to check whether a marked set is a marked
basis using the reduction process introduced in Definition \ref{def:reduction}.

\begin{theorem}\label{thm:algoMBasis}
Let $U\subset \Sk^m_{\mathbf d}$ be a quasi-stable module and  $G$ be a $\mathcal{P}(U)$-marked set. The set $G$ is a $\mathcal P(U)$-marked basis if and only if
\[ \forall f_\alpha^k \in  G,\forall x_i\in
\nmult{f_\alpha^k} : x_if_\alpha^k\xrightarrow{\ G^{(s)}} 0^m_A.
\]
\end{theorem}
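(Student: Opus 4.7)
My plan is to prove both implications, with the forward direction being a quick consequence of the earlier machinery and the reverse direction carrying the substantive work through a well-founded induction.

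For the forward implication, assume $G$ is a $\mathcal{P}(U)$-marked basis and fix $f_\alpha^k \in G$ together with $x_i \in \nmult{f_\alpha^k}$, writing $s := \deg(f_\alpha^k) + 1$. By Proposition \ref{prop:ConfNoeth} the reduction $x_i f_\alpha^k \fullredg g$ terminates at a unique $g \in \langle \mathcal{N}(U)\rangle^A$. Every reduction step subtracts an element of $\langle G\rangle$, so $g$ also lies in $\langle G\rangle_s$, giving $g \in \mathcal{N}(U,\langle G\rangle)_s = 0$ by Theorem \ref{th:markedSetChar}\eqref{it:markedSetChar_vii}.

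For the reverse implication, I assume the reduction hypothesis. By Theorem \ref{th:markedSetChar}\eqref{it:markedSetChar_vi} combined with part \eqref{it:markedSetChar_i}, it suffices to show that every $x^\delta f_\alpha^k \in \widehat{G}^{(s)}$ lies in $\langle G^{(s)}\rangle^A$. I will carry out a well-founded induction on $x^\delta$ with respect to the lex-order on $\mathbb T$ (a well-order in finitely many variables). In the inductive step, let $x_i$ be the largest variable occurring in $x^\delta$; since $x^\delta \notin A[\mult{f_\alpha^k}]$, one has $x_i > \min(x^\alpha)$, hence $x_i \in \nmult{f_\alpha^k}$. Writing $x^\delta = x_i x^{\delta'}$, the hypothesis applied to $x_i f_\alpha^k$ yields a representation
\[
x_i f_\alpha^k = \sum_j \lambda_j\, x^{\eta_j} f_{\alpha_j}^{k_j}, \qquad \lambda_j \in A \setminus \{0\},\ x^{\eta_j} f_{\alpha_j}^{k_j} \in G^{(\deg(f_\alpha^k)+1)}.
\]
Multiplying by $x^{\delta'}$ I obtain $x^\delta f_\alpha^k = \sum_j \lambda_j\,(x^{\delta'} x^{\eta_j})\, f_{\alpha_j}^{k_j}$; each summand is either already in $G^{(s)}$ (when $x^{\delta'} x^{\eta_j} \in A[\mult{f_{\alpha_j}^{k_j}}]$), or else lies in $\widehat{G}^{(s)}$ with multiplier $x^{\delta'} x^{\eta_j}$, which the inductive hypothesis absorbs into $\langle G^{(s)}\rangle^A$ provided this multiplier is strictly lex-smaller than $x^\delta$.

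The step that will take genuine work is the intermediate claim that every $x^{\eta_j}$ above satisfies $x^{\eta_j} <_{lex} x_i$; combined with compatibility of lex with monomial multiplication, this yields $x^{\delta'} x^{\eta_j} <_{lex} x^{\delta'} x_i = x^\delta$ and closes the induction. Reductions acting on tail terms of intermediate polynomials are handled by Lemma \ref{lem:modulelexorder} directly. The delicate case is the very first reduction, which has to cancel the head term $x_i x^\alpha e_k$ by the unique $x^{\eta_0} f_\beta^k \in G^{(\deg(f_\alpha^k)+1)}$ for the Pommaret basis element $x^\beta e_k$ whose cone contains $x_i x^\alpha e_k$; since $x_i \in \nmult{x^\alpha}$ forces $\beta \neq \alpha$, I will argue by a divisibility analysis on $\mathcal{P}(U)$: disjointness of Pommaret cones rules out $x^\beta \mid x^\alpha$ (otherwise $x^\alpha$ itself would fall in $\mathcal{C}^m_{\mathcal{P}}(x^\beta e_k)$), so $x_i$ must appear in $x^\beta$ with exponent $\alpha_i + 1$, while $x^{\eta_0} \in A[\mult{x^\beta}]$ bounds its variables above by $\min(x^\beta) \leq x_i$; together these give $x^{\eta_0}$ no factor of $x_i$ and only variables strictly below $x_i$, whence $x^{\eta_0} <_{lex} x_i$. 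From this base step, Lemma \ref{lem:modulelexorder} propagates the bound through the rest of the reduction chain.
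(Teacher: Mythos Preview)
Your proof is correct. The paper itself gives no argument here beyond the one-line reference ``We can repeat the arguments used in \cite[Theorem~5.13]{CMR15} for the ideal case'', so your self-contained treatment is considerably more informative than what the paper provides.

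A few remarks on presentation. Your divisibility analysis for the first reduction step is right but compressed: the implication ``$x^\beta\mid x^\alpha\Rightarrow x^\alpha\in\mathcal C^m_{\mathcal P}(x^\beta e_k)$'' is not true in general, but it \emph{is} true here because you already know $x_i x^\alpha=x^\beta x^{\eta_0}$ with $x^{\eta_0}\in A[\mult{x^\beta}]$, so that $x^\alpha/x^\beta=x^{\eta_0}/x_i$ would automatically lie in $A[\mult{x^\beta}]$; it would help the reader to say this explicitly. Likewise, your appeal to Lemma~\ref{lem:modulelexorder} for propagating the bound $x^{\eta_j}<_{lex}x_i$ through the chain is correct, but note that the lemma must be invoked both for the tail terms of the \emph{initial} element $x_i f_\alpha^k$ (with $x^\delta=x_i$) and, inductively, for the tail terms introduced at each subsequent reduction step (with $x^\delta=x^{\eta_j}<_{lex}x_i$); your final sentence hints at this but could be more explicit.

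Substantively, your strategy---reduce via Theorem~\ref{th:markedSetChar}\eqref{it:markedSetChar_vi} to showing $\widehat G^{(s)}\subset\langle G^{(s)}\rangle^A$, then run a lex-well-founded induction on the multiplier using the hypothesis to peel off one non-multiplicative variable at a time---is exactly the natural argument and almost certainly coincides with the one in \cite{CMR15} to which the paper defers.
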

\begin{proof}
We adapt the arguments used in \cite[Theorem 5.13]{CMR15} for the ideal case.
Since ``$\Rightarrow$'' is a consequence of Theorem \ref{th:markedSetChar}, we
only prove ``$\Leftarrow$''. More precisely, we prove that $\langle G \rangle_s
= \langle G^{(s)} \rangle$ showing that if $f_\alpha^k \in G$ and
$\deg(x^{\alpha + \delta}) = s$, then $x^\delta f_\alpha^k$ is either an
element of $G^{(s)}$ itself or a linear combination of polynomials in
$G^{(s)}$.

If this were not true, we can choose an element $x^\delta f_\alpha^k \in
\langle G^{(s)} \rangle$ with $x^\delta$ minimal with respect to $<_{lex}$. As
$x^\delta f_\alpha^k \notin G^{(s)}$, at least one variable $x_i$ appearing in
$x^\delta$ with a non-zero exponent is non-multiplicative for $x^\alpha$. Let
$x^\delta = x_i x^{\delta^\prime}$. By hypothesis, $x_i f_\alpha^k \fullredg
0$, so that $x_i f_\alpha^k$ is a linear combination $\sum c_i x^{\eta_i} f_
{\beta_i}^{k_i}$ of polynomials in $G^{(|\alpha| + 1)}$. By Lemma \ref
{lem:modulelexorder}, we have $x^{\eta_i} <_{lex} x_i$.

Now $x^\delta f_\alpha^k = x^{\delta^\prime}(x_i f_\alpha^k) = x^
{\delta^\prime} (\sum c_i x^{\eta_i} f_{\beta_i}^{k_i}) = \sum c_i x^{\eta_i +
\delta^\prime} f_{\beta_i}^{k_i}$, where $x^{\eta_i \delta^\prime} <_ {lex}
x_i x^{\delta^\prime} = x^\delta$. This yields a contradiction, since
$x^{\eta_i + \delta} f_{\beta_i}^{k_i} \in \langle G^{(s)} \rangle$ by the
minimality of $x^\delta$.
\end{proof}

\section{The scheme structure of $\mathrm{Mf}(\mathcal P(U))$}\label{subsec:MarkedScheme}

We now exhibit a natural scheme structure on the set containing all
modules generated by a $\mathcal P(U)$-marked basis with $U$ a quasi-stable
module as in the previous section. If $\sigma:
A\rightarrow B$ is a morphism of $\Bbbk$-algebras, we will also call $\sigma$ its natural extension  to a morphism $A[\mathbf x]\rightarrow B[\mathbf x]$.

 We consider the functor
of the marked bases on $\mathcal P(U)$ from the category of Noetherian
$ \Bbbk$-algebras to the category of sets
\[
\MFFunctor{\mathcal P(U)}^{\, m,  \mathbf d}: \underline{\text{Noeth}\  \Bbbk\!\!-\!\!\text{Alg}} \longrightarrow \underline{\text{Sets}}
\]
that associates to any Noetherian $ \Bbbk$-algebra  $A$ the set
$$
\MFFunctor{\mathcal P(U)}^{\, m,  \mathbf d}(A):=\{ G\subset \Sk^m_{\mathbf d}\mid G\text{ is a } \mathcal P(U)\text{-marked basis}\}\,,$$
or, equivalently by Corollary \ref{LemmaCri},
$$\MFFunctor{\mathcal P(U)}^{\, m,  \mathbf d}(A):=\{W\subset \Sk^m_{\mathbf d}\mid W \text{ is generated by a $\mathcal P(U)$-marked basis}\}\,,$$
and to any morphism $\sigma: A \rightarrow B$ the map
\[
\renewcommand{\arraystretch}{1.3}
\begin{array}{rccc}
\MFFunctor{\mathcal P(U)}^{\, m,  \mathbf d}(\sigma):& \MFFunctor{\mathcal P(U)}^{\, m,  \mathbf d}(A) &\longrightarrow& \MFFunctor{\mathcal P(U)}^{\, m,  \mathbf d}(B)\\
&G & \longmapsto& \sigma(G)\;.
\end{array}
\]
Note that the image $\sigma(G)$ under this map is indeed again a
$\mathcal P(U)$-marked basis, as we are applying the functor $-\otimes_A B$
to the decomposition
$(A[\mathbf x]^m_{\mathbf d})_s =\langle
G^{(s)}\rangle^A\oplus\langle\cN(U)_s\rangle^A$ for every degree $s$.

The above introduced functor turns out to be representable by an affine
scheme that can be explicitly constructed by the following procedure.\\
  We
consider the $ \Bbbk$-algebra $ \Bbbk[C]$ where $C$ denotes the finite set of
variables
$\bigl\{C_{\alpha\eta k l}\mid x^\alpha e_k \in \mathcal P(U ), x^\eta
e_l\in \mathcal N(U ), \deg(x^\eta e_l)=\deg(x^\alpha e_k)\bigr\}$ and
construct the $\mathcal P(U)$-marked set
$\mathcal G\subset  \Bbbk[C][\mathbf x]_{\mathbf d}^m$ consisting of all
elements
\begin{equation}\label{polymarkKC}
F_\alpha^k=\biggl(x^\alpha-\sum_{x^{\eta}\in \mathcal N(J^{(k)})_{\vert\alpha\vert}}C_{\alpha\eta k k}x^\eta \biggr)e_k
-\mathop{\sum_{l\neq k,x^\eta e_l\in \mathcal N(J^{(l)})e_l}}_{\deg(x^\eta e_l)=\deg(x^\alpha e_k)} C_{\alpha\eta k l}x^\eta e_l
\end{equation}
with $x^{\alpha}e_{k}\in\mathcal P(U)$.  Then, we compute all the complete
reductions $x_i F_{\alpha}^{k} \xrightarrow{{\mathcal G^{(s)}}}_\ast L$ for
every term $x^\alpha e_k \in \mathcal P(U)$ and every non-multiplicative
variable $x_i\in \nmult{F_\alpha^k}$ and collect the coefficients of the
monomials $x^\eta e_j \in \cN(U)$ of all the reduced elements $L$ in a set
$\mathcal R\subset  \Bbbk[C]$.

\begin{theorem}\label{thm:rappr}
The functor   $\MFFunctor{\mathcal P(U)}^{\, m,  \mathbf d}$   is represented by the scheme $\mathrm{Spec}( \Bbbk[C]/(\mathcal{R}))$, that we denote by $\MFScheme{\mathcal P(U)}^{\, m,  \mathbf d}$.
\end{theorem}

\begin{proof}
We observe that each element $ f_\alpha^k$  of a  $\mathcal P(U)$-marked set   $G$   in $\Sk^m_{\mathbf d}$ can be written in the following form:
\[
f_\alpha^k=\left(x^\alpha-\sum_{x^{\eta}\in \mathcal N(J^{(k)})_{\vert\alpha\vert}}c_{\alpha\eta k k}x^\eta \right)e_k
-\mathop{\sum_{l\neq k,x^\eta e_l\in \mathcal N(J^{(l)})e_l}}_{\deg(x^\eta e_l)=\deg(x^\alpha e_k)} c_{\alpha\eta k l}x^\eta e_l, \quad c_{\alpha\eta k l}\in A.
\]
Therefore, $G$ can be obtained by specializing in $\mathcal G$ the
variables $C_{\alpha\eta k l}$ to the constants $c_{\alpha\eta k l}\in A$.
Moreover, $G$ is a $\mathcal P(U)$-marked basis if and only
$ x_i f^k_{\alpha} \xrightarrow{{ G^{(s)}}}_\ast 0$ for every
$x^\alpha e_k \in \mathcal P(U)$ and $x_i\in\nmult{f_\alpha^k}$.
Equivalently, $G$ is a $\mathcal P(U)$-marked basis, if and only if the
evaluation morphism $\varphi: \Bbbk[C]\rightarrow A$,
$\varphi(C_{\alpha\eta k l})=c_{\alpha\eta k l}$, factors through
$ \Bbbk[C]/(\mathcal R)$, namely, if and only if the following
diagram commutes
\[
\xymatrix{
 \Bbbk[C]  \ar[r]^\varphi \ar[d]& A\\
 \Bbbk[C]/(\mathcal R)\ar[ru]}\,.
\]
\end{proof}

\begin{remark}
  The arguments presented in the proof of Theorem \ref{thm:rappr}
  generalize those presented in \cite{CMR15, LR2} for ideals to our more
  general framework of modules.

  As a consequence of this result we know that the scheme defined as
  $\mathrm{Spec}( \Bbbk[C]/(\mathcal R))$ only depends on the
  submodule $U$ and not on the possibly different procedures for
  constructing it: any other procedure that gives a set of \lq\lq
  minimal\rq\rq\ conditions on the coefficients $C$ that are necessary and
  sufficient to guarantee that a $\mathcal P(U)$-marked set $G$ is a
  $\mathcal P(U)$-marked basis generates an ideal $\mathcal R'$ such that
  $ \Bbbk[C]/(\mathcal R)= \Bbbk[C]/(\mathcal R')$.
\end{remark}

\section{$\mathcal{P}(U)$-marked Bases and Syzygies}\label{sec:syz}

We now study syzygies of a $\mathcal{P}(U)$-marked basis and we formulate
a $\mathcal{P}(U)$-marked version of the involutive Schreyer theorem
\cite[Theorem 5.10]{Seiler2009II}.  For notational simplicity, this section
is formulated for $m=1$, that is for ideals in $\Sk$, but it is straightforward to extend
everything to submodules $\Sk^m_{\mathbf d}$ generated by a marked basis over a quasi-stable module.

Let $J$ be a quasi-stable monomial ideal in $\Sk$ and $I$ an ideal in
$\Sk$ generated by a $\mathcal{P}(J)$-marked basis $G$. Let $m$ be the
cardinality of $\mathcal P(J)$. We denote the terms in $\mathcal P(J)$ by
$x^{\alpha(k)}$ and the polynomials in $G$ by $f_{\alpha(k)}$, with
$k\in \{1,\dots,m\}$.

\begin{lemma}\label{lem:uniqueStRep}
Every polynomial $f \in I$ can be uniquely written in the form $f = \sum_{l=
1}^m P_{l} f_{\alpha(l)}$ with $f_{\alpha(l)}\in G$ and $P_{l} \in A[\mult{f_
{\alpha(l)}}]$.
\end{lemma}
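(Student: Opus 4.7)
The plan is to deduce this from the characterisation of marked bases in Theorem \ref{th:markedSetChar} together with the non-triviality statement of Corollary \ref{lem:notzero}, using crucially the disjointness of Pommaret cones.

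First I would reduce to the homogeneous case: since $I$ is graded, it suffices to prove existence and uniqueness for a single homogeneous component $f\in I_s$ (the general statement then follows by summing over degrees, and since each multiplicative variable has a fixed set depending only on $f_{\alpha(l)}$, the union of the pieces $P_l$ across degrees is still in $A[\mult{f_{\alpha(l)}}]$). For \emph{existence}, Theorem \ref{th:markedSetChar}\eqref{it:markedSetChar_vi} gives $\langle G\rangle_s=\langle G^{(s)}\rangle^A$, so we may write
\[
f=\sum_i \lambda_i\, x^{\delta_i}f_{\alpha(k_i)},\qquad \lambda_i\in A,\ x^{\delta_i}\in A[\mult{f_{\alpha(k_i)}}],
\]
and then define $P_l:=\sum_{k_i=l}\lambda_i x^{\delta_i}\in A[\mult{f_{\alpha(l)}}]$, yielding the desired representation.

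For \emph{uniqueness}, I would show that the expression $0=\sum_{l=1}^m P_l f_{\alpha(l)}$ with $P_l\in A[\mult{f_{\alpha(l)}}]$ forces every $P_l$ to vanish. Expanding each $P_l=\sum_j\lambda_{l,j}x^{\delta_{l,j}}$ with distinct monomials $x^{\delta_{l,j}}\in A[\mult{f_{\alpha(l)}}]$ turns the identity into
\[
\sum_{l,j}\lambda_{l,j}\,x^{\delta_{l,j}}f_{\alpha(l)}=0,
\]
where by construction each product $x^{\delta_{l,j}}f_{\alpha(l)}$ lies in $G^{(s)}$ for $s=\deg(x^{\delta_{l,j}}x^{\alpha(l)})$. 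The products are pairwise distinct elements of the corresponding $G^{(s)}$: indeed, their head terms $x^{\delta_{l,j}}x^{\alpha(l)}$ are all different, because the Pommaret cones $\mathcal C_{\mathcal P}(x^{\alpha(l)})$ are disjoint (since $\mathcal P(J)$ is a Pommaret basis, not merely a weak one) and, for fixed $l$, the $x^{\delta_{l,j}}$ are distinct by assumption. Restricting the sum to each single degree $s$ and applying Corollary \ref{lem:notzero} then forces $\lambda_{l,j}=0$ for every $(l,j)$, whence $P_l=0$ for all $l$.

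The only delicate point will be the disjointness argument establishing that the monomials $x^{\delta_{l,j}}f_{\alpha(l)}$ represent pairwise distinct elements of $G^{(s)}$, so that Corollary \ref{lem:notzero} can legitimately be invoked; once this is in place the rest is bookkeeping. Everything then extends verbatim to submodules of $\Sk^m_{\mathbf d}$ by replacing each $f_{\alpha(l)}$ with a marked module element $f_{\alpha(l)}^{k_l}$ and reading Pommaret cones inside $\mathbb T^m$.
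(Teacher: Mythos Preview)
Your proof is correct and follows essentially the same approach as the paper. The paper's own proof is a single line citing Proposition~\ref{lem:markedlexorder} and Theorem~\ref{th:markedSetChar}\eqref{it:markedSetChar_vi}; you invoke Theorem~\ref{th:markedSetChar}\eqref{it:markedSetChar_vi} for existence and Corollary~\ref{lem:notzero} (rather than Proposition~\ref{lem:markedlexorder}, or equivalently the freeness in Theorem~\ref{th:markedSetChar}\eqref{it:markedSetChar_iii}) for uniqueness, which is the same underlying mechanism spelled out in slightly more detail.
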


\begin{proof}
This is a consequence of Corollary \ref{lem:markedlexorder} and Theorem \ref
{th:markedSetChar} \eqref{it:markedSetChar_vi}.

\end{proof}

Take an arbitrary element $f_{\alpha(k)} \in G$ and choose an arbitrary non-multiplicative
variable $x_i \in \nmult{f_{\alpha(k)}}$ of it. We can
determine, via the reduction process $\xrightarrow{G^{(s)}}$, for each
$f_{\alpha(l)}\in G$ a unique polynomial $P_l^{k;i} \in
A[\mult{f_{\alpha(l)}}]$ such that $x_i f_{\alpha(k)} = \sum_{l=1}^m P_l^{k;i}
f_{\alpha(l)}$.  This relation corresponds to the \emph{fundamental syzygy}
\begin{equation*}
S_{k;i} = x_i e_k - \sum_{l=1}^m P_l^{k;i} e_l\;.
\end{equation*}
We denote the set of all fundamental syzygies by
\begin{equation*}
 G_{\Syz} = \{S_{k;i} \; | \; k\in \{1,\dots,m\}, \; x_i \in \nmult{f_{\alpha(k)}}\}\;.
\end{equation*}

\noindent
We consider the syzygies in $G_{\Syz}$ as elements of  $\Sk^m_{\mathbf d}$
with $\mathbf d=(\deg(x^{\alpha(1)}), \dots, \deg(x^{\alpha(m)}))$.

\begin{lemma}\label{lem:zerosyz}
Let $S = \sum_{l=1}^m S_l e_l$ be an arbitrary syzygy of the $\mathcal
P(J)$-marked basis $G$ with coefficients $S_l \in \Sk$. Then $S_l \in
A[\mult{f_{\alpha(l)}}]$ for all $1 \leq l \leq m$ if and only if $S = 0_A^m$.
\end{lemma}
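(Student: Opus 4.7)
The plan is to observe that this lemma is essentially a direct corollary of the uniqueness statement in Lemma \ref{lem:uniqueStRep}, applied to the zero polynomial.

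The ``if'' direction is trivial: if $S = 0_A^m$, then every component $S_l = 0$ lies in $A[\mult{f_{\alpha(l)}}]$ vacuously. For the ``only if'' direction, I would start from the hypothesis that $S$ is a syzygy, meaning $\sum_{l=1}^m S_l f_{\alpha(l)} = 0$ in $\Sk$, with each coefficient satisfying $S_l \in A[\mult{f_{\alpha(l)}}]$. Then the zero polynomial, which certainly belongs to the ideal $I = \langle G \rangle$, admits two representations of the form prescribed by Lemma \ref{lem:uniqueStRep}: the trivial one $0 = \sum_{l=1}^m 0 \cdot f_{\alpha(l)}$ and the one given by $0 = \sum_{l=1}^m S_l f_{\alpha(l)}$. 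Since both representations have all coefficients in the required multiplicative polynomial rings $A[\mult{f_{\alpha(l)}}]$, the uniqueness clause of Lemma \ref{lem:uniqueStRep} forces $S_l = 0$ for every $l$, whence $S = 0_A^m$.

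The only subtlety worth checking is that Lemma \ref{lem:uniqueStRep} is indeed applicable to a possibly non-homogeneous $S$; if one prefers to apply the lemma only degreewise (since it ultimately relies on Proposition \ref{lem:markedlexorder}, which is phrased in a fixed degree $s$), one can first decompose $S$ into its homogeneous components. Each homogeneous component $S^{(s)}$ is still a syzygy, its coefficients still lie in the respective multiplicative polynomial rings (which are graded), and the argument above then shows $S^{(s)} = 0_A^m$ for every $s$.

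I do not expect any genuine obstacle here: the result is really just a reformulation of the unique-representation property, which in turn rests on Theorem \ref{th:markedSetChar}\eqref{it:markedSetChar_vi} telling us that $\langle G \rangle_s = \langle G^{(s)} \rangle^A$ together with the direct-sum decomposition in \eqref{it:markedSetChar_ii} that guarantees freeness of $\langle G^{(s)} \rangle^A$ on the involutive multiples.
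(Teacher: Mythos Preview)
Your proposal is correct and follows exactly the same approach as the paper: apply the uniqueness assertion of Lemma~\ref{lem:uniqueStRep} to the zero polynomial, comparing the trivial representation with the one given by the syzygy. The paper's proof is essentially a terser version of what you wrote, and your remark on reducing to homogeneous components is a reasonable (if optional) clarification.
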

\begin{proof}

If $S \in \Syz(G)$, then $\sum_{l=1}^m S_l f_{\alpha(l)} = 0$. According to
Lemma \ref{lem:uniqueStRep}, each $f \in I$ can be uniquely written in the
form $f = \sum_{l= 1}^m P_{l} f_{\alpha(l)}$ with $f_{\alpha(l)}\in G$ and
$P_{l} \in A[\mult{f_{\alpha(l)}}]$. In particular, this holds for $0_A \in
I$. Thus
%either we have $S_l \notin A[\mult{f_{\alpha(l)}}]$ for at least one $l$ or
$0_A = S_l \in A[\mult{f_{\alpha(l)}}]$ for all $l$ and hence $S = 0_A^m$.
\end{proof}

\begin{lemma}\label{lem:syzmarkedset}
Let $U$ be the monomial module   $U=\oplus_{l=1}^m(\nmult{x^{\alpha(l)}}) e_l$
where   $(\nmult{x^{\alpha(l)}})$ is the ideal generated by
$\nmult{x^{\alpha(l)}}$ in $\Sk$. Then $U$ is a quasi-stable module with
Pommaret basis   $\mathcal{P}(U)= \{x_i e_l \ | \ 1\leq l\leq m, x_i \in
\nmult{f_{\alpha(l)}} \}$ and $G_{\Syz}$ is a $\mathcal{P}(U)$-marked set   in
$\Sk^m_{\mathbf {d}}$.
\end{lemma}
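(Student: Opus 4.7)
The statement contains three sub-claims that I would verify in order: (a) $U$ is quasi-stable, (b) its Pommaret basis is the claimed set $\{x_ie_l : x_i\in\nmult{f_{\alpha(l)}}\}$, and (c) $G_\Syz$ is a $\mathcal P(U)$-marked set over this basis.

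For (a) and (b) I would work componentwise. Since the remark after the module-version of quasi-stability says that $U=\oplus_l J^{(l)}e_l$ is quasi-stable iff each $J^{(l)}$ is, everything reduces to showing that the ideal $J^{(l)}:=(\nmult{x^{\alpha(l)}})\subset\Sk$ is quasi-stable with Pommaret basis equal to the set of variables $\nmult{x^{\alpha(l)}}$. Let $p_l$ be the index of $\min(x^{\alpha(l)})$, so that $\nmult{x^{\alpha(l)}}=\{x_{p_l+1},\dots,x_n\}$; this is manifestly the minimal monomial basis of $J^{(l)}$. I would then verify stability directly via Definition \ref{def:qstable}: for any generator $x_j$ (so $j>p_l$) and any nonmultiplicative variable $x_i$ of $x_j$ (so $i>j>p_l$), the term $x_ix_j/\min(x_j)=x_i$ is itself one of the generators of $J^{(l)}$. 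Hence $J^{(l)}$ is stable, and by Theorem \ref{thm:BorelEquiv} its Pommaret basis equals its minimal basis. Assembling across $l$ gives exactly the set claimed for $\mathcal P(U)$.

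For (c) I must check the two defining conditions of a marked set. First, by construction $\Ht(S_{k;i})=x_ie_k$ with $x_i\in\nmult{f_{\alpha(k)}}$, which lies in $\mathcal P(U)$ by part (b). Secondly, I must show $\supp(S_{k;i}-\Ht(S_{k;i}))=\supp\bigl(\sum_l P_l^{k;i}e_l\bigr)\subset\langle\mathcal N(U)\rangle$. The reduction process that produces the $P_l^{k;i}$ guarantees $P_l^{k;i}\in A[\mult{f_{\alpha(l)}}]=A[x_0,\dots,x_{p_l}]$. Any monomial in this subring involves only variables of index $\leq p_l$, hence is coprime with every generator $x_{p_l+1},\dots,x_n$ of $J^{(l)}$; in particular it does not lie in $J^{(l)}$, so the corresponding term $x^\delta e_l$ lies in $\mathcal N(U)$. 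This yields the required containment.

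I do not expect any real obstacle here: once the structure of $J^{(l)}$ as an ideal generated by variables is unwound, both the Pommaret basis description and the marked-set condition become direct combinatorial matches between "multiplicative for $f_{\alpha(l)}$" and "not in $J^{(l)}$". The only consistency check worth mentioning in the write-up is that $|G_\Syz|=\sum_k|\nmult{f_{\alpha(k)}}|$ agrees with $|\mathcal P(U)|=\sum_l|\nmult{x^{\alpha(l)}}|$, which is immediate.
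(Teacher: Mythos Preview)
Your proof is correct and follows essentially the same line as the paper's: the paper invokes \cite[Lemma~5.9]{Seiler2009II} for parts (a)--(b), whereas you spell out directly that each $J^{(l)}=(x_{p_l+1},\dots,x_n)$ is stable and then apply Theorem~\ref{thm:BorelEquiv}, which is exactly the content of that cited lemma. For part (c) both arguments are identical, using that $P_l^{k;i}\in A[\mult{f_{\alpha(l)}}]$ forces every tail term into $\mathcal N(U)$.
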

\begin{proof}
By \cite[Lemma 5.9]{Seiler2009II} we can immediately conclude that $U$ is a
quasi-stable module and that the set $\{x_i e_l \ | \ 1\leq l\leq m, x_i \in
\nmult{f_{\alpha(l)}} \}$ is the Pommaret basis of $U$.

We define $\Ht(S_{i; l}) = x_i e_l$ and easily see that $G_{\Syz}$ is a
$\mathcal{P}(U)$-marked-set: by definition of $U$,  every term $x^\mu e_k$ in
$\supp (S_{l; i} - x_i e_l)$ belongs to $\mathcal N(U)$, because $x^\mu \in
\mult{f_{\alpha(k)}}$.
\end{proof}

Observe that for every fundamental syzygy $S_{k;i}\in G_{\Syz}$,
$\mult{S_ {k;i}}=\{x_0,\dots,x_i\}$. As in Section \ref{sec:markedModules}, we
define for every degree $s$ the following set of polynomials in $\langle
G_{\Syz}\rangle$:
\[
G_{\Syz}^{(s)}=\{x^\delta S_{k;i}\mid S_{k;i}\in G_{\Syz},x^\delta \in \mult{S_
{k;i}}, \deg(x^\delta S_{k;i})=s\}.
\]

\begin{lemma}\label{lem:syzgen}
The set $G_{\Syz}^{(s)}$ generates the $A$-module $\Syz(G)_s$ for every $s$.
\end{lemma}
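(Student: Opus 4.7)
The plan is to show that every homogeneous syzygy $S=\sum_{k=1}^m S_k e_k\in\Syz(G)_s$ can be rewritten as an $A$-linear combination of elements of $G_{\Syz}^{(s)}$ through an iterative elimination of ``bad'' terms, meaning terms $c\,x^\beta e_k$ of $S$ (with $c\neq 0_A$) for which $x^\beta\notin A[\mult{f_{\alpha(k)}}]$. Indeed, if no bad terms remain, then each $S_k$ lies in $A[\mult{f_{\alpha(k)}}]$ and Lemma~\ref{lem:zerosyz} forces $S=0_A^m$, so it suffices to describe a single reduction step and prove that the resulting procedure terminates.

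For one reduction step, given a bad term $c\,x^\beta e_k$ I would let $x_i$ be the \emph{largest} non-multiplicative variable of $f_{\alpha(k)}$ dividing $x^\beta$. Any variable of $x^\beta$ strictly greater than $x_i$ would then be multiplicative for $f_{\alpha(k)}$, i.e.\ $\leq\min(x^{\alpha(k)})<x_i$, which is impossible; hence $x^\gamma:=x^\beta/x_i$ involves only the variables $x_0,\dots,x_i$, so $x^\gamma\in A[\mult{S_{k;i}}]$ and $x^\gamma S_{k;i}\in G_{\Syz}^{(s)}$. Replacing $S$ by $S-c\,x^\gamma S_{k;i}$ cancels the chosen bad term and introduces only new terms coming from $c\,x^\gamma\sum_l P_l^{k;i}e_l$.

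To ensure termination I would establish that every monomial $x^\delta$ occurring in any coefficient $P_l^{k;i}$ satisfies $x^\delta<_{lex}x_i$, which with our order $x_0<\cdots<x_n$ means that $x^\delta$ involves only variables strictly below $x_i$. For all reductions after the first this follows from Lemma~\ref{lem:modulelexorder} applied inductively along the chain $x_if_{\alpha(k)}\fullredg 0$ (the tail of any $x^\eta f_{\alpha(j)}$ subtracted during the reduction produces only further multipliers $<_{lex}x^\eta$). For the very first reduction step it follows from the disjointness of Pommaret cones: if the Pommaret decomposition $x_i x^{\alpha(k)}=x^{\eta_1}x^{\alpha(j_1)}$ had $\max(x^{\eta_1})\geq x_i$, then the inclusion $x^{\eta_1}\in A[\mult{x^{\alpha(j_1)}}]$ would force $\min(x^{\alpha(j_1)})\geq x_i$, and a short case analysis on whether $x_i$ divides $x^{\eta_1}$ places $x^{\alpha(k)}$ in the Pommaret cone of $x^{\alpha(j_1)}$ (contradicting disjointness, since $\min(x^{\alpha(k)})<x_i\leq\min(x^{\alpha(j_1)})$) or else violates the multiplicativity of $x^{\eta_1}$.

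Granted this lex-majorisation, the new monomials $x^\gamma x^\delta$ introduced by a reduction step use no variable above $x_i$ and have strictly smaller $x_i$-exponent than $x^\beta=x_ix^\gamma$, so $x^\gamma x^\delta<_{lex}x^\beta$; picking at each step a bad term whose monomial is lex-maximal, this maximum strictly decreases, and since $(\Sk)_s$ contains only finitely many monomials the procedure halts. When it halts, the residue is a syzygy with all components in $A[\mult{f_{\alpha(l)}}]$, hence zero by Lemma~\ref{lem:zerosyz}, and $S$ equals the accumulated sum of multiplicative multiples of the $S_{k;i}$, each in $G_{\Syz}^{(s)}$. The main obstacle is precisely the lex-majorisation $x^\delta<_{lex}x_i$ of the coefficients $P_l^{k;i}$, which is the point where the combination of Lemma~\ref{lem:modulelexorder} with the cone-disjointness of $\mathcal P(J)$ is essential.
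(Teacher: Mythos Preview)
Your proof is correct and follows essentially the same approach as the paper's: both iteratively subtract a multiplicative multiple $x^\gamma S_{k;i}\in G_{\Syz}^{(s)}$ corresponding to the lex-maximal bad term, argue via the lex-majorisation of the coefficients $P_l^{k;i}$ that the new bad terms are strictly lex-smaller, and invoke Lemma~\ref{lem:zerosyz} at the end. Your separate treatment of the head-term reduction via cone disjointness makes explicit a point the paper absorbs into a single citation of Proposition~\ref{lem:markedlexorder}.
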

\begin{proof}
Let $S = \sum_{l = 1}^m S_l e_l$ be an arbitrary non-vanishing syzygy in
$\Syz(G)_s$. By Lemma \ref{lem:zerosyz}, there is at least one index $k$ such
that the coefficient $S_{k}$ contains a term $x^\mu$ depending on a non-
multiplicative variable $x_i \in \nmult{f_{\alpha(k)}}$.  Among all such
values of $k$ and $\mu$ we choose the term $x^\mu e_k$ which is
lexicographically maximal. Then, $x^\mu e_k$ belongs to the quasi-stable
module $U$, hence there is $x^\delta S_{k;j}\in G^{(s)}_{\Syz}$ such that
$x^\delta x_j =x^\mu$. We define $S^\prime = S - \lambda x^\delta S_{k; j}$,
where $\lambda \not= 0_A$ is the coefficient of $x^\mu e_k$ in $S$.

 Now we have to show that for every $x^\nu$ which is contained in a term
$\lambda x^\nu e_l$ in $\supp(S^\prime)\cap U$, $x^\nu$ is lexicographically
smaller than $x^\mu$. The terms of $\supp(S)\cap U$ contained in
$\supp(S^\prime)$  are by assumption lexicographically  smaller than $x^\mu
e_k$. Every other term arises from  $x^\delta\sum_{l=1}^m P_l^{(k;j)} e_l$. We
know that $x_j f_{\alpha(k)} = \sum_{l=1}^m P_l^{(k;j)} f_{\alpha(l)}$. In
particular, a term $x^{\nu'}$ in $P_l^{(k;j)}$ is lexicographically smaller
than $x_j$, by Corollary \ref{lem:markedlexorder}. Therefore every term in
$x^\delta \sum_{l=1}^m P_l^{ (k;j)} e_\beta$ is lexicographically smaller than
$x^\delta x_j=x^\mu$. If $S^\prime \neq 0$, again by Lemma  \ref{lem:zerosyz},
we iterate the procedure on a lexicographical maximal term of $S'$ containing
a non-multiplicative variable. Since all new non-multiplicative terms introduced
are lexicographically smaller, the reduction process must stop after a finite
number of steps. As a result we get a representation $S^\prime = \sum_{l=1}^l
S'_l e_l$ such that $S'_l \in A[\mult{f_{\alpha(l)}}]$ for all $1 \leq l \leq
m$. But Lemma \ref{lem:zerosyz} says that this sum must be zero.
\end{proof}

\begin{theorem}[$\mathcal{P}(U)$-marked Schreyer Theorem]\label
{thm:markedschreyer}
Let $G = \{f_{\alpha(1)}, \dots f_{\alpha(m)}\}$ be a $\mathcal{P}(J)$-marked
basis. Then $G_{\Syz}$ is a $\mathcal{P}(U)$-marked basis of $\Syz(G)$, with
$U$ as in Lemma \ref{lem:syzmarkedset}.
\end{theorem}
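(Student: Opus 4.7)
The strategy is to combine the three preparatory lemmas with the equivalence (v) $\Leftrightarrow$ (vi) of Theorem \ref{th:markedSetChar}. All the technical work has already been carried out; what remains is to assemble the pieces.

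First I would record the two inclusions that hold trivially from the construction. Since every $S_{k;i}$ is by definition a syzygy of $G$, we have $\langle G_{\Syz}\rangle \subseteq \Syz(G)$ and in particular $\langle G_{\Syz}^{(s)}\rangle^A \subseteq \langle G_{\Syz}\rangle_s \subseteq \Syz(G)_s$ for every degree $s$. Lemma \ref{lem:syzgen} provides the reverse inclusion $\Syz(G)_s \subseteq \langle G_{\Syz}^{(s)}\rangle^A$, so combining these we obtain the equality
\[
\langle G_{\Syz}\rangle_s \;=\; \langle G_{\Syz}^{(s)}\rangle^A \;=\; \Syz(G)_s
\]
for every $s$. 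In particular $\langle G_{\Syz}\rangle = \Syz(G)$, which justifies the phrase \emph{of $\Syz(G)$} in the statement.

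Next I would invoke Lemma \ref{lem:syzmarkedset}, which tells us that $U$ is quasi-stable with the described Pommaret basis and that $G_{\Syz}$ is a $\mathcal{P}(U)$-marked set in $\Sk^{m}_{\mathbf d}$. Having verified that $\langle G_{\Syz}\rangle_s = \langle G_{\Syz}^{(s)}\rangle^A$ for every $s$, the equivalence \eqref{it:markedSetChar_v} $\Leftrightarrow$ \eqref{it:markedSetChar_vi} of Theorem \ref{th:markedSetChar} applies directly and concludes that $G_{\Syz}$ is a $\mathcal{P}(U)$-marked basis.

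The main point requiring care is the distinction between the $\Sk$-module $\langle G_{\Syz}\rangle_s$ and the $A$-submodule $\langle G_{\Syz}^{(s)}\rangle^A$: the latter uses only multiplicative extensions and is in principle smaller, so the nontrivial content is precisely that the two coincide in every degree. This, however, is exactly what Lemma \ref{lem:syzgen} delivers, so no additional argument is needed and no real obstacle arises.
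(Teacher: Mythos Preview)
Your proof is correct and follows essentially the same route as the paper: invoke Lemma \ref{lem:syzmarkedset} to see that $G_{\Syz}$ is a $\mathcal{P}(U)$-marked set, use Lemma \ref{lem:syzgen} to obtain $\langle G_{\Syz}^{(s)}\rangle^A=\Syz(G)_s=\langle G_{\Syz}\rangle_s$ for every $s$, and conclude via the equivalence \eqref{it:markedSetChar_v}$\Leftrightarrow$\eqref{it:markedSetChar_vi} of Theorem \ref{th:markedSetChar}. Your explicit chain of inclusions and the remark that $\langle G_{\Syz}\rangle=\Syz(G)$ are a slight elaboration of the paper's terser version, but the argument is the same.
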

\begin{proof}
By Lemma \ref{lem:syzmarkedset}, we know that $G_{\Syz}$ is a
$\mathcal{P}(U)$-marked set. By Lemma $\ref{lem:syzgen}$, we know that
$\langle G_{\Syz}^{(s)}\rangle^A=\langle \Syz(G)_s\rangle^A$ and we conclude
by Theorem \ref{th:markedSetChar} \eqref{it:markedSetChar_vi}.
\end{proof}

Iterating this result, we arrive at a (generally non-minimal) free
resolution.  In contrast to the classical Schreyer Theorem for Gr\"obner
bases, we are able to determine the ranks of all appearing free modules
without any further computations.

\begin{theorem}\label{thm:freeRes}
Let $G = \{f_{\alpha(1)}, \dots, f_{\alpha(m)}\}$, $\deg(f_{\alpha(i)})=d_i$,
be a $\mathcal{P}(J)$-marked basis and $I$ the ideal generated by $G$ in
$\Sk$. We denote by $\beta_{0,j}^{(k)}$ the number of terms $x^\alpha \in
\mathcal P(J)$ such that $\deg(x^\alpha)=j$ and $\min (x^\alpha) = x_k$ and
set $D=\min_{x^\alpha \in \mathcal P(J)}\{i \mid x_i=\min(x^\alpha)\}$. Then
$I$ possesses a finite free resolution
\begin{equation}\label{eq:freeres}
0\longrightarrow\bigoplus\Sk(-j)^{r_{n-D,j}}\longrightarrow\cdots
\longrightarrow\bigoplus\Sk(-j)^{r_{1,j}}
\longrightarrow\bigoplus\Sk(-j)^{r_{0,j}}\longrightarrow I \longrightarrow0
\end{equation}
of length $n - D$ where the ranks of the free modules are given by
\begin{equation*}
r_{i,j} = \sum_{k = 1}^{n-i} \binom{n-k}{i}\beta_{0,j-i}^{(k)}.
\end{equation*}
\end{theorem}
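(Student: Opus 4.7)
The plan is to iterate Theorem \ref{thm:markedschreyer}, having first extended it from ideals to submodules of $\Sk^m_{\mathbf d}$ (the authors note this extension is routine, and it only amounts to carrying through the componentwise data). At each stage we keep track of the head terms of the current marked basis by degree and by minimum variable. Let $\beta_{i,j}^{(k)}$ denote the number of head terms of the marked basis of the $i$-th syzygy module whose degree is $j$ and whose minimum variable is $x_k$. For $i=0$, this is the given data from $\mathcal{P}(J)$. Since by Lemma \ref{lem:syzmarkedset} each generator at level $i$ whose head term has $\min=x_k$ contributes, at level $i+1$, exactly one new head term $x_r e_{\sigma}$ for every non-multiplicative variable $x_r$ (i.e.\ every $r>k$), of degree one greater and with new minimum $x_r$, we obtain the recursion
\[
\beta_{i+1,j}^{(k)} = \sum_{l=0}^{k-1} \beta_{i,j-1}^{(l)}.
\]

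I would then solve this recursion by induction on $i$, using the hockey-stick identity $\sum_{p=q}^{N}\binom{p}{q}=\binom{N+1}{q+1}$, to obtain the closed form
\[
\beta_{i,j}^{(k)} = \sum_{l} \binom{k-l-1}{i-1}\,\beta_{0,j-i}^{(l)}.
\]
Summing over $k$ and applying the hockey-stick identity once more yields the claimed expression for $r_{i,j}=\sum_k \beta_{i,j}^{(k)}$. This combinatorial bookkeeping is the only non-routine step, but it is completely explicit.

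It remains to justify that the resolution has length exactly $n-D$ and is genuinely exact. A level-zero generator with $\min=x_k$ can contribute to at most $n-k$ successive syzygy steps, because the recursion strictly raises the minimum variable at each step; the smallest $k$ occurring at level $0$ being $D$, the iteration can persist at most $n-D$ rounds. At level $n-D$ every head term that remains has $\min=x_n$, so its set of non-multiplicative variables is empty; by Lemma \ref{lem:zerosyz} applied at that stage (the condition $S_l\in A[\mult{f_{\alpha(l)}}]$ is then vacuous), the next syzygy module vanishes, and the complex terminates. Exactness at every intermediate spot is automatic from Theorem \ref{thm:markedschreyer}, which at each stage produces a true marked basis—not just a generating set—of the kernel of the previous map. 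Combined with the rank computation above, this yields the resolution \eqref{eq:freeres} of length $n-D$, which is consistent with $\pdim(I)=n-D$ from Lemma \ref{potenze}\eqref{potenze_pd}.
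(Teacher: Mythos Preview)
Your proposal is correct and follows essentially the same route as the paper: iterate Theorem~\ref{thm:markedschreyer}, track the head terms by degree and minimum variable via the recursion $\beta_{i,j}^{(k)}=\sum_{l<k}\beta_{i-1,j-1}^{(l)}$, solve it by induction with a standard binomial identity, and sum over $k$ to obtain $r_{i,j}$. You supply slightly more detail than the paper on why the complex actually terminates at level $n-D$ (invoking Lemma~\ref{lem:zerosyz} once all head terms have $\min=x_n$), but the overall strategy is identical.
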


\begin{proof}
According to Theorem \ref{thm:markedschreyer}, $G_{\Syz}$ is a
$\mathcal{P}(U)$-marked basis for the module $\Syz_1(I)$, with $U$ as in Lemma
\ref{lem:syzmarkedset}. Applying the theorem again, we can construct a marked
basis of the second syzygy module $\Syz_2(I)$ and so on.  Recall that for
every index $1\leq l\leq m$ and for every non-multiplicative variable $x_k\in
\nmult{f_{\alpha(l)}}$ we have  $\min(\Ht(S_{l;k})) = k >
\min(\Ht(f_{\alpha(l)}))$.  If $D$ is the  index of the minimal variable
appearing in a  head term in $G$, then the  index of the minimal variable
appearing in a head term in $G_{\Syz}$ is $D+ 1$. This observation gives an immediate formula to compute the
length of the resolution \eqref{eq:freeres}.  Furthermore $\deg(S_{k;i}) =
\deg(f_{\alpha(i)})$, e.\,g.\   from the $i$-th to the $(i + 1)$-th module the
degree from the basis element to the corresponding syzygies grows by one.

The ranks of the modules follow from a rather straightforward combinatorial
calculation. Let $\beta_{i,j}^{(k)}$ denote the number of generators of degree
$j$ of the $i$-th syzygy module $\Syz_i(G)$ with minimal variable in the head
term $x_k$ . By definition of the generators $S_{l; k}$, we find
\begin{equation*}
\beta_{i,j}^{(k)} = \sum_{t = 1}^{k - 1} \beta_{i - 1,j - 1}^{(n -t)}
\end{equation*}
as each generator with minimal variable smaller than $k$ and degree $j-1$ in the
marked basis of $\Syz_i(G)$ contributes one generator of minimal variable $k$
and degree $j$ to the marked basis of $\Syz_i(G)$. A simple but lengthy induction
allows us to express $\beta_{i,j}^{(k)}$ in terms of $\beta_{0,j}^{(k)}$:
\begin{equation*}
\beta_{i,j}^{(k)} = \sum_{t = 1}^{k - i}\binom{k-l-1}{i-1}\beta_{0,j -i}^{(t)}
\end{equation*}
Now we are able to compute the ranks of the free modules via
\begin{equation*}
r_{i,j} = \sum_{k = 1}^n\beta_{i,j}^{(k)} = \sum_{k = 1}^{n}\sum_{t = 1}^{k
-i}\binom{k-t-1}{i - 1}\beta_{0,j - i}^{(t)} = \sum_{k = 1}^{n -i} \binom{n -k}
{i}\beta_{0,j-i}^{(k)}.
\end{equation*}
The last equality follows from a classical identity for binomial coefficients.
\end{proof}

\begin{remark}
Observe that the direct summands in the resolution \eqref{eq:freeres}
depend only on the Pommaret basis $\mathcal P(J)$ and not on the ideal
$I$, while the maps in \eqref{eq:freeres} depend on $I$.
\end{remark}

\begin{corollary}\label{cor:bounds}
Let $G$ be a $\mathcal{P}(J)$-marked basis and $I$ the ideal generated by $G$
in $\Sk$. Define $r_{i,j}$ as in Theorem \ref{thm:freeRes} and let $b_{i,j}$
be, as usual, the Betti numbers of $I$. Then
\begin{itemize}
\item $b_{i,j}  \leq r_{i,j}$ for all $i,j$;
\item  $\reg (I) \leq \reg(J)$;
\item $\pdim(I) \leq  \pdim(J)$.
  \end{itemize}
\end{corollary}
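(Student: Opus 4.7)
The plan is to derive all three bounds directly from the (generally non-minimal) free resolution \eqref{eq:freeres} produced in Theorem~\ref{thm:freeRes}, together with the combinatorial description of its ranks and the characterizations of $\reg(J)$ and $\pdim(J)$ from Lemma~\ref{potenze}.

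First I would recall the standard fact that if an $A[\mathbf x]$-module $I$ admits \emph{any} graded free resolution $\cdots \to \bigoplus A[\mathbf x](-j)^{r_{i,j}} \to \cdots \to I \to 0$, then the graded Betti numbers satisfy $b_{i,j} \leq r_{i,j}$: the minimal free resolution is obtained from any given one by splitting off trivial direct summands, so the ranks can only decrease. Applied to the resolution \eqref{eq:freeres} of Theorem~\ref{thm:freeRes}, this immediately yields the first inequality $b_{i,j} \leq r_{i,j}$.

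Next, for the regularity bound, I would look at when $r_{i,j} \neq 0$. From the formula $r_{i,j} = \sum_{k = 1}^{n-i} \binom{n-k}{i}\beta_{0,j-i}^{(k)}$, non-vanishing of $r_{i,j}$ forces $\beta_{0,j-i}^{(k)} \neq 0$ for some $k$, i.\,e.\ there exists a term $x^\alpha \in \mathcal P(J)$ of degree $j-i$. By Lemma~\ref{potenze}\eqref{importante_00}, $\reg(J)$ is precisely the maximal degree of a term in $\mathcal P(J)$, so $j - i \leq \reg(J)$. Combining this with the already established $b_{i,j} \leq r_{i,j}$, every pair $(i,j)$ with $b_{i,j} \neq 0$ satisfies $j - i \leq \reg(J)$, which gives $\reg(I) \leq \reg(J)$ by the definition of Castelnuovo-Mumford regularity.

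Finally, for the projective dimension bound, the length of the resolution \eqref{eq:freeres} is $n - D$ with $D = \min\{i : x_i = \min(x^\alpha),\ x^\alpha \in \mathcal P(J)\}$. Since $I$ admits a free resolution of length $n-D$, we have $\pdim(I) \leq n - D$; but by Lemma~\ref{potenze}\eqref{potenze_pd}, $\pdim(J) = n - D$, so $\pdim(I) \leq \pdim(J)$. No step is really an obstacle here, since all the hard work has been done in Theorem~\ref{thm:freeRes} and Lemma~\ref{potenze}; the only point that requires a moment of care is the first one, namely the passage from ranks of a possibly non-minimal resolution to the Betti numbers, which is the standard minimality-argument recalled above.
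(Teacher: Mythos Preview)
Your proposal is correct and follows exactly the same approach as the paper's proof, which simply states that ``the three inequalities follow from the free resolution \eqref{eq:freeres} of $I$, recalling that $\reg(J)= \max_{x^\alpha \in \mathcal P(J)}\{\deg(x^\alpha)\}$ and $\pdim(J)= n-\min_{x^\alpha \in \mathcal P(J)}\{i \mid x_i=\min(x^\alpha)\}$.'' You have merely spelled out the details that the paper leaves implicit.
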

\begin{proof}
The three inequalities follow from the free resolution \eqref{eq:freeres} of
$I$, recalling that $\reg(J):= \max_{x^\alpha \in \mathcal P(J)}\{\deg
(x^\alpha)\}$ and $\pdim(J)=  n-\min_{x^\alpha \in \mathcal P(J)}\{i \mid
x_i=\min(x^\alpha)\}$.
\end{proof}

If $G$ is even a Pommaret basis for the reverse lexicographic term order,
i.\,e.\ if $J$ is the leading  ideal of $I$ for this order, then we
obtain the stronger results $\reg (I) = \reg(J)$ and $\pdim(I) = \pdim(J)$
(for other term orders we also get only estimates) \cite[Corollaries 8.13,
9.5]{Seiler2009II}.

\begin{example}\label{ex:nonMinRes}

Let $\Sk =  \Bbbk[x_0, x_1, x_2]$, $J$ the monomial ideal with Pommaret basis
$\mathcal{P}(J) = \{x_2^3, x_2^2x_1, x_2 x_1, x_1 x_0, x_1^2\}$ and $I$ the
polynomial ideal generated by $G = \{g_1, g_2, g_3, g_4, g_5\}$ with
\begin{equation*}
\begin{array}{clcl}
& g_1 = x_2^3\,, \qquad &  & g_2 = x_2^2x_1\,, \\
& g_3 = x_2 x_1 \,, \qquad &  & g_4 = x_1x_0 + x_2^2 \,, \\
& g_5 = x_1^2\,. \qquad &  &
\end{array}
\end{equation*}
One easily checks that $G$ is a $\mathcal{P}(J)$-marked basis.

We explicit compute the multiplicative representations of $x_2 \cdot g_2$,
$x_2 \cdot g_3$, $x_1 \cdot g_4$,   $x_2 \cdot g_4$,  $x_2 \cdot g_5$ which
yield the set of fundamental syzygies $G_{\Syz}=\{S_{2;2}, S_{3;2},
S_{4;1}, S_{4;2}, S_{5;2}\} \subset \Sk^5$:
\begin{equation*}
\begin{array}{clcl}
  &x_2 \cdot g_2 = x_1 \cdot g_1\,,&\qquad &S_{2;2} = x_2 \cdot e_2 - x_1 \cdot e_1\,,      \\
  &x_2 \cdot g_3 =           g_2 \,,   &\qquad &S_{3;2} = x_2 \cdot e_3 -           e_2\,,      \\
  & x_1 \cdot g_4 = x_0 \cdot g_5 + g_2\,,&\qquad &      S_{4;1} = x_1 \cdot e_4 - x_0 \cdot e_5 - e_2\,,\\
  &x_2 \cdot g_4 = x_0 \cdot g_3 + g_1\,,&\qquad &       S_{4;2} = x_2 \cdot e_4 - x_0 \cdot e_3 - e_1\,,\\
 & x_2 \cdot g_5 = x_1 \cdot g_3\,, &\qquad &       S_{5;2} = x_2 \cdot e_5 - x_1 \cdot e_3\,.
\end{array}
\end{equation*}

The only non-multiplicative variable for $G_{\Syz}$ is
$\nmult{S_{4;1}} = \{x_2\}$.

Therefore we have to compute the reduction of $x_2 S_{4;1}$ which is $x_2
S_{4;1} = x_1 S_{4;2} - S_{2;2} - x_0 S_{5;2}$ and hence we get
the set of fundamental syzygies of the first syzygy module $G_{\Syz_2} = \{x_2
e_3 - x_1 e_4 - e_1 - x_0 e_5\} \subset \Sk^5$.

This leads to the following free resolution of $I$ of length two:
\begin{multline*}
      0\longrightarrow \Sk(-4) \xrightarrow{\quad \delta_2 \quad} \Sk(-4)\oplus \Sk(-3)^4
      \xrightarrow{\quad \delta_1 \quad} \\
      \xrightarrow{\quad \delta_1 \quad}\Sk(-3)^2\oplus\Sk(-2)^3
      \xrightarrow{\quad \delta_0 \quad} I \longrightarrow 0,
    \end{multline*}
where
    \begin{equation*}
      \delta_0 = \left(
        \begin{array}{ccccc}
          x_2^3 & x_2^2x_1 & x_2 x_1 & x_1x_0 + x_2^2 & x_1^2
        \end{array}
      \right),
    \end{equation*}
    \begin{equation*}
      \delta_1 = \left(
        \begin{array}{cccccc}
          -x_1 &  0   &  0   & -1   &  0   \\
           x_2 & -1   &  -1  &  0   &  0   \\
             0 &  x_2 &  0   & -x_0 & -x_1 \\
             0 &  0   &  x_1 &  x_2 &  0   \\
             0 &  0   & -x_0 &  0   &  x_2 \\
        \end{array}
      \right),\quad
      \delta_2 = \left(
        \begin{array}{c}
             1 \\
             0 \\
           x_2 \\
          -x_1 \\
           x_0
        \end{array}
      \right).
    \end{equation*}

  This free resolution is not minimal,  it is obvious looking at the matrices, which contain constant entries. Minimizing the resolution
  leads to the minimal free resolution of $I$ of length one:
    \begin{equation*}
      0\longrightarrow  \Sk(-3)^2
      \xrightarrow{\quad \delta_1^\prime \quad} \Sk(-2)^3
      \xrightarrow{\quad \delta_0^\prime \quad} I\longrightarrow 0.
    \end{equation*}

    Hence in the present example, we have $1=\pdim(I)<\pdim(J)=2$ and $2=\reg(I)<\reg(J)=3$.

\end{example}

\begin{example}
Let $\Sk =  \Bbbk[x_0, x_1, x_2]$, $J$ the monomial ideal with Pommaret basis
$\mathcal{P}(J)=\{x_2x_1, x_2^2 x_1, x_2^3,x_1^3, x_2^2x_0, x_1^2x_0\}$ and
$I$ be the ideal generated by the $\mathcal P(J)$-marked basis
$G=\{g_1, g_2, g_3, g_4, g_5, g_6\}$ with
\begin{equation*}
\begin{array}{clcl}
& g_1=x_2x_1-x_2^2-x_1^2\,, & \qquad & g_2=x_2^2 x_1\,, \\
& g_3=x_2^3\,, & \qquad & g_4=x_1^3\,, \\
& g_5=x_2^2x_0\,, & \qquad & g_6=x_1^2x_0\,,
\end{array}
\end{equation*}
where $\Ht(g_1)=x_2x_1$. Observe that $G$ is not a Gr\"obner
basis, for any term order, due to the terms in $x_2x_1-g_1$.

By Theorem \ref{thm:freeRes}, we construct the following free resolution of $I$:
\begin{multline}\label{eq:exrisnonmin}
  0\longrightarrow \Sk(-5)^2 \xrightarrow{\quad \delta_2 \quad} \Sk(-3)\oplus\Sk(-4)^6
  \xrightarrow{\quad \delta_1 \quad}\\
  \xrightarrow{\quad \delta_1 \quad} \Sk(-2)\oplus\Sk(-3)^5
  \xrightarrow{\quad \delta_0 \quad} I \longrightarrow 0.
\end{multline}

The above resolution is obviously not minimal. The minimal free resolution of $I$ is
\begin{equation*}
      0\longrightarrow \Sk(-5)^2 \xrightarrow{\quad \delta_2' \quad} \Sk(-4)^6
      \xrightarrow{\quad \delta_1' \quad} \Sk(-2)\oplus\Sk(-3)^4
      \xrightarrow{\quad \delta_0' \quad} I \longrightarrow 0.
    \end{equation*}

In this case, although the resolution \eqref{eq:exrisnonmin} is not minimal, the bounds on projective dimension and regularity given in Corollary \ref{cor:bounds} are sharp.
\end{example}

We will apply the theory of marked bases and schemes to the study of 	Quot schemes. Theorem \ref{thm:freeRes} and Corollary \ref{cor:bounds} seem to suggest that marked bases are particularly suitable to study loci of a Quot scheme given by bounds on the invariants of a module coming from the minimal free resolution: regularity, projective dimension, extremal Betti numbers. However, this is only partly true, due to the fact that in order to study special loci of marked schemes we need to prove inequailities like those in Corollary \ref{cor:bounds} on saturated ideals.

Nevertheless, we will be able to study the locus of a Quot scheme given by an upper bound for the regularity, thanks to the following theorem (and corollary).
The proofs are given for ideals, but they also hold for modules in $\Sk^m$ generated by a marked basis over a quasi-stable module.
\begin{theorem}
Let $J\subseteq \Sk$ be a stable ideal, generated in a single degree $s$, and $I$ be the ideal generated by a $\mathcal P(J)$-marked basis $G$. Then $J$ and $I$ have the same Betti numbers.
\end{theorem}

\begin{proof}
Since $J$ is stable, $\mathcal P(J)$ is the minimal monomial generating set of $J$. Then we can  follow the lines of the proof of \cite[Theorem 4.4]{BBR}, thanks to Theorem \ref{th:markedSetChar}, items \eqref{it:markedSetChar_v}, \eqref{it:markedSetChar_vi}.
\end{proof}

\begin{corollary}\label{cor:RegBoundInMS}
Let $J\subseteq \Sk$ be a stable ideal, generated in a single degree $s$, and $I$ be the ideal generated by a $\mathcal P(J)$-marked basis $G$. Then $\reg(J^{\sat})\geq \reg(I^{\sat})$.
\end{corollary}

%
%%%%%%%%%%%%%%%%%%%%%%%%%%%%%%%%%%%%%%%%

\section{Deterministic computations for stable positions}\label{sec:detchange}

In the present paper, from Definition \ref{def:MarkedSet} on, we considered
marked sets over a quasi-stable monomial module.  We now focus on marked
sets whose polynomials are generated in a single degree, which is the case
of interest for our applications. We are interested in investigating how to
modify a finite set of polynomials, so that they become a marked set over a
quasi-stable module.

\begin{remark}
  Consider a monomial module $U$ generated by
  $T = \{x^{\mu^{(1)}} e_{k_1}, \dots, x^{\mu^{(q)}} e_{k_q}\}$. Let $s$ be
  the maximal degree of a term in $T$ and assume that $U$ is not
  quasi-stable, i.\,e.\ there exists $x^\mu e_k \in T$ and
  $j > c := \min(x^\mu)$ such that
  $\displaystyle x_j^s \frac{x^\mu}{x_c^{\mu_c}} e_k \notin U$. This
  implies that the term
  $\displaystyle x_j^ {\mu_c} \frac{x^\mu}{x_c^{\mu_c}} e_k$ does not
  belong to $U$. If we now consider the module $\widehat{U}$ generated by
  $\widehat{T}= \{{\displaystyle x_j^ {\mu_c} \frac{x^\mu}{x_c^{\mu_c}}
    e_k},x^{\mu^{(1)}} e_{k_1}, \dots, x^{\mu^{ (q)}} e_{k_q}\}$, then it
  is clear that $\widehat{U}$ is somehow nearer to quasi-stability than
  $U$. This observation is studied in much more detail for the case of
  ideals in \cite{Schweinfurter2016} and \cite{hss:detgen}.
\end{remark}

With the knowledge of the remark above, we define an \emph{elementary move}
$m_{l,t,a}$ as a linear change of variables of the form $x_i \mapsto x_i$
if $i\neq l$ and $x_l \mapsto x_l + a \cdot x_t$ for suitable indices
$l<t $ and a parameter $a \in \Bbbk^{\times}$. If we apply $m_{l,t,a}$ to a term
$x^{\mu} $ we obtain a polynomial
\[
 m_{l,t,a}(x^\mu) = \sum_{i=0}^{\mu_{l}}{\mu_l\choose i} a^i x^\mu\frac{x_t^i}{x_l^i}
\]
The polynomial $m_{l,t,a}(x^\mu)$ always contains at least two terms:
$x^\mu$ with coefficient $1$ and $\frac{x^{\mu}x_t^{\mu_l}}{x_l^{\mu_l}}$
with coefficient $a^{\mu_l}$. In the case of a prime characteristic, any
other coefficient may vanish for some values of $\mu_{l}$ and $j$.  We
extend the linear transformation $m_{l,t,a}$ to polynomials and sets of
polynomials in the obvious way.

It is clear that any monomial module is marked on itself. If we apply a
coordinate transformation on a monomial module, then the new module is
generally no longer monomial, but will have a non-monomial minimal
generating set. The next proposition shows that we can construct again a
marked set out of the new module.

\begin{proposition}\label{prop:cordtransformminor}
  For a given degree $s\geq 0$, let
  $T = \{x^{\mu_{(1)}} e_{k_1}, \dots , x^{\mu_{(q)}} e_{k_q}\}$ be a set
  of terms in $\T^m_s$ and let $\Kext$ be a field extension of $\Bbbk$ such
  that $|\Kext| > sq$. Furthermore, let
  $F = \{f_1, \dots, f_q\} \subset \Kext[\mathbf x]_s^m$ be a $T$-marked
  set.  Assume that $x^\mu e_k := x^{\mu_{(1)}} e_{k_1}$ is an obstruction
  to quasi-stability for $\langle T \rangle$ and set
  $\widehat{F} = m_{c,j,a}(F)$ for an arbitrary $a \in \Kext^{\times}$ and
  some $j>c=\min(x^{\mu})$. Setting
  $x^{\widehat{\mu}} e_k :=\displaystyle
  x_j^{\mu_c}\frac{x^\mu}{x_c^{\mu_c}} e_k \notin \langle T\rangle$, we
  denote by $\widehat{T}$ the set of terms
  $\{x^{\widehat {\mu}} e_k, x^{\mu_{(2)}} e_{k_2}, \dots , x^{\mu_{(q)}}
  e_{k_q}\}$ obtained by replacing the first generator by
  $x^{\widehat{\mu}} e_k$.  Then there exists a set
  $F^\prime \subseteq \langle \widehat{F} \rangle_s$, which is marked over
  $\widehat{T}$ and which can be constructed from $\widehat{F}$ via linear
  combinations.
\end{proposition}

\begin{proof}
  We consider the linear transformation $m_{c,j,a}$ for some
  $a\in \Kext^\times$. The considered term $x^{\mu} e_k$ transforms as
  follows
  \begin{displaymath}
    m_{c,j,a}(x^\mu) e_k =
    \sum_{i=0}^{\mu_{c}}{\mu_c\choose i} a^i x^\mu\frac{x_j^i}{x_c^i}e_{k}\;.
  \end{displaymath}
  By our choice of the index pair $(c,j)$, the term $x^{\widehat\mu} e_k$
  appears on the right hand side with a non-zero coefficient (otherwise the
  considered elementary move was not admissable) for the index value
  $i=\mu_c$.

  Applying the transformation $m_{c,j,a}$ to all polynomials $f_{i}\in F$
  yields new generators $\widehat{f}_{i}$ and each $\widehat{f}_{i}$ still
  contains the term $x^{\mu_{(i)}} e_{k_i}$
  with a coefficient which is a polynomial in $a$ with constant term $1$.
  It may happen that the term
  $x^{\mu_{(i)}} e_{k_i}$ now also appears in
  other generators $\widehat{f}_{l}$, but then its coefficient there is
  always a polynomial in $a$ without a constant term.  Furthermore, in
  $\widehat{f}_{1}$ the term $x^{\widehat{\mu}} e_k$ now appears. Its
  coefficient contains in particular the term $a^{\mu_c}$ coming from the
  above transformation of $x^{\mu}$.  If $x^{\widehat{\mu}} e_k$ also lies
  in the support of some other generator $\widehat{f}_{l}$, then its
  coefficient cannot contain the term $a^{\mu_c}$, as $x^{\mu} e_k$
  appeared only in $f_{1}$, as the set $F$ was assumed to be marked over
  $T$.

  These observations imply that, after taking suitable linear combinations
  of the polynomials $\widehat{f}_i$, we can arrive at a set of polynomials
  $F^\prime:=\{\widehat{h}_{1},\dots,\widehat{h}_{q}\}$ such that for
  $i\in\{1,\dots,q\}$ the only term of $\widehat{T}$ appearing in
  $\widehat{h}_i$ is: $x^{\widehat{\mu}}e_{k}$ for $i=1$, and
  $x^{\mu_{(i)}}e_{k_i}$ for the other values
  of $i$.

  It cannot happen that for some $i = 2,\dots, q$ the term
  $x^{\mu_{(i)}}e_{k_i}$ vanishes when we
  perform the linear combinations on
  $\widehat{h}_{1},\dots,\widehat{h}_{q}$, because there is exactly one
  term $x^{\mu_{(i)}}e_{k_i}$ which has as
  coefficient a polynomial in $a$ with constant term $1$. By the same
  argument, it is clear that the term $x^{\widehat{\mu}}e_{k}$ does not
  vanish by performing linear combinations as its coefficient $a^{\mu_c}$
  in $\widehat{h_1}$ is unique. But this implies that the set $F^\prime$
  obtained from $F$ by the coordinate transformation and suitable linear
  combinations is marked over $\widehat{T}$.  Furthermore, in each
  polynomial $\widehat{h_i}$ the coefficient of the head module term is a
  polynomial in $a$ of degree at most $s$. Since we have $q$ such
  coefficients, the assumption $\vert \Kext\vert> sq$ guarantees that there
  exists a choice for $a$ such that none of these polynomials vanishes.
\end{proof}

From now on, we will assume for simplicity that the field $\Bbbk$ is
infinite, hence we will use coordinate transformations in
$\PGL:=\PGL_{\Bbbk}(n+1)$.  For every element
$g \in \PGL := \mathrm{\PGL}_{\Bbbk}(n + 1)$, $\tilde{g}$ denotes the
automorphism induced by $g$ on $\Sk^m$ and and $g\centerdot$ denotes the
corresponding action on an element. If $G$ is a subset of $\Sk^m$,
$\tilde g \centerdot G$ is the set obtained by applying $\tilde g$ to every
element of $G$.  We can now rephrase Proposition
\ref{prop:cordtransformminor} in the following way, keeping in mind that
under the hypothesis that $\Bbbk$ is infinite, it is also Zariski dense in
any field extension $\Kext$.

\begin{corollary}
  Let $F\subset \Sk^m_s$ be a finite set of polynomials. Then there exists
  a transformation $g\in \PGL$ such that $\tilde g\centerdot F$ is a marked
  set over a quasi-stable module.
\end{corollary}

\begin{lemma}\label{lem:existsQS}
  Consider $\ell\geq 0$. Let $F$ be a saturated module in
  $\Kext[\mathbf x]^m$ for a field extension $\Kext$ of $\Bbbk$ with
  Hilbert polynomial $p(z)$ and $\reg(F)\leq \ell$. Then there exists a
  transformation $g\in \PGL$ and a stable module
  $U=\langle U_\ell\rangle\subset \Kext[\mathbf x]^m$ having Hilbert
  polynomial $p(z)$ and $\reg(U^{\sat})\leq \ell$ such that
  $g\circ \langle F_\ell\rangle$ belongs to $\MFFunctor{U}^m$
\end{lemma}

\begin{proof}
  By \cite[Thm.~2.16]{Seiler2009II} (or \cite[Thm.~6.11,
  Rem.~6.13]{hss:detgen}),\footnote{These references consider only the case
    of ideals. However, the extension to modules along the lines of
    Prop.~\ref{prop:cordtransformminor} is straightforward.} there exists a
  transformation $g \in \PGL$ such that $g\circ \langle F_\ell\rangle$ has
  a Pommaret basis for the degree reverse lexicographic term order. This
  means that the initial module $U$ of $g\circ \langle F_\ell\rangle$ is
  quasi-stable and has the same Hilbert polynomial as $F$. Since
  $\reg(U)=\reg(F_\ell)=\ell$, we have that $U=\langle U_\ell\rangle$ and
  $U$ is even stable. Now it suffices to observe that
  $\reg(U^{\sat})\leq \reg(U)=\ell$.
\end{proof}

\section{Definition of Quot functor and Quot functor with bounded regularity}\label{sec:defquot}

From now on, all the modules $M\in \Sk^m$, with $A$ a $\Bbbk$-algebra

Let $p(z)\in \mathbb Q[z]$ be the Hilbert polynomial of $\Kx^m/M$, for some homogeneous module $M\subseteq \Kx^m$. We denote by $N_m(z)$ the polynomial $m{n+z\choose z}$ and by $q(z)$  the polynomial $N_m(z)-p(z)$.

By \cite[Proposition 3.1]{Dell}, there is a unique Gotzmann representation of $p(z)$:
\[
p(z)={z+a_1 \choose a_1}+{z+a_2-1 \choose a_2}+\cdots+{z+a_r-(r-1) \choose a_r}
\]
where $a_1\geq a_2\geq \cdots a_r\geq 0$.

We define $r$ as the Gotzmann number of $p(z)$. We recall that, by \cite[Proposition 4.1]{Dell} $r$ is an upper bound for the regularity of the associated sheaf $\widetilde M$.

We now define the Hilbert function and the Hilbert polynomial in a more general case, following the lines of \cite{nitsure}:

Let $X$ be a finite type scheme over a field $ \Bbbk$, together with a line
bundle $L$. Recall that if $F$ is a coherent sheaf on $X$ whose support is
proper over $ \Bbbk$, then the \emph{Hilbert polynomial} $\Phi \in \QQ[z]$ of
$F$ is defined by the function
\[
\Phi(z) = \chi(F(z)) = \sum_{i=0}^n (-1)^i \dim_ \Bbbk H^i(X, F \otimes L^
{\otimes z})
\]
where the dimensions of the cohomologies are finite because of the coherence and
properness conditions. The fact that $\chi(F(m))$ is indeed a polynomial in $m$
under the above assumption is a special case of what is known as Snapper's Lemma
(see \cite[Theorem B.7]{KleimanFGA} for a proof).

Let $X \longrightarrow S$ be a finite type morphism of noetherian schemes,
and let $L$ be a line bundle on $X$. Let $\mathcal{F}$ be any coherent sheaf on
$X$ whose schematic support is proper over $S$. Then for each $s \in S$ we get a
polynomial $\Phi_s \in \QQ[z]$ which is the Hilbert polynomial of the
restriction $\mathcal{F}_s = F|_{X_s}$ of $\mathcal{F}$ to the fiber $X_s$ over
$s$, calculated with respect to the line bundle $L_s = L|_{X_s}$. If $\mathcal
{F}$ is flat over $S$ then the function $s \mapsto{\Phi_s}$ from the set of
points to $S$ to the polynomial ring $\QQ[z]$ is known to be locally constant on
$S$

We will denote by $\PP^n$ the projective space over $ \Bbbk$.
If $Z$ is a $ \Bbbk$-scheme we define $\PP^n_Z := \PP^n \times_ \Bbbk Z$ and if $A$ is a finitely generated $ \Bbbk$-algebra, $\PP^n_A$ is defined as $\PP^n_{\Spec (A)}$.

We are interested in the case $X =\PP^n_Z$ and $L = \mathcal{O}_{\PP^n_Z}(1)$ and $F$ is a quotient of $ \mathcal O^m_{\PP^n_Z}$.

If $F$ is flat over $Z$ then the Hilbert polynomial of the fibres is locally constant. If it is constant we call it the Hilbert polynomial of $F$.

In the following, $\QuotFunctor$ will denote the Quot functor $\schm^{\circ}\rightarrow \Sets$
that associates to an object $Z$ of the category of schemes over $ \Bbbk$ the set
\begin{equation*}
 \QuotFunctor(Z) = \{ \mathcal Q \hbox{ quotients   of }  \mathcal O^m_{\PP^n_Z}\  \text{ flat  over } Z \hbox{ with  Hilbert
 polynomial } p(z)\}.
\end{equation*}

The Hilbert polynomial of $\mathcal Q$ is defined considering the Hilbert polynomial of each fibre of $Z$
and to any morphism of schemes $\varphi \colon Z \rightarrow Z'$ the map
\[
\begin{split}
\QuotFunctor(\varphi )\colon&\ \QuotFunctor(Z') \rightarrow \QuotFunctor(Z)\\
&\parbox{2.18cm}{\centering $\mathcal Q'$} \mapsto\ \varphi^\ast\mathcal Q'
\end{split}
\]

The Quot functor was introduced by Grothendieck in \cite{GroHilbert}, where he also proved that this functor is the functor of points of a projective scheme. In the present paper, we will not use this fact, but we will give an independent proof of the existence of the Quot scheme. Here, we only assume that the Quot functor is a Zariski sheaf \cite[Section 5.1.3]{nitsure};
hence, we can consider it as a covariant functor from the category
of noetherian  $ \Bbbk$-algebras \cite[Lemma E.11]{Sernesi}
\begin{equation*}
\QuotFunctor\colon \Kalg \rightarrow \Sets
\end{equation*}
such that for every finitely generated  $ \Bbbk$-algebra $A$
\[
\QuotFunctor(A) = \left\{ \mathcal Q \hbox{ quotients   of } \ \mathcal O^m_{\PP^n_A} \ \hbox{ flat over } \Spec A \text{ flat  with  Hilbert
polynomial }
 p(z) \right\}.
\]
and for any $ \Bbbk$-algebra  morphism $f\colon A \rightarrow B$
\[
\begin{array}{rccl}
\QuotFunctor(f)\colon&\ \QuotFunctor(A)\ &\rightarrow& \QuotFunctor(B)\\
&\widetilde{ Q }& \mapsto\ &\widetilde{Q\otimes_A    B }
\end{array}
\]
where $Q=H^0_\ast\mathcal Q$, for $\mathcal Q\in \QuotFunctor(A) $.

This is equivalent to consider the functor $\Kalg \rightarrow \Sets$ that associate to every $ \Bbbk$-algebra $A$ the set
\begin{equation*}
 \QuotFunctor(A) = \{  M  \ \hbox{saturated submodules   of } \Sk^m \text{ s.t. } \Sk^m/M \hbox{ flat with  Hilbert  polynomial } p(z)\}.
\end{equation*}
and to every $ \Bbbk$-algebras homomorphism $f\colon A \rightarrow B$ the function
\[
\begin{array}{rccl}
\QuotFunctor(f)\colon&\ \QuotFunctor(A)\ &\rightarrow& \QuotFunctor(B)\\
& { M }& \mapsto\ & {M\otimes_A    B }
\end{array}
\]

Inspired by the results in Section \ref{sec:syz} and by \cite{BBR} for Hilbert schemes, we intend to study a special subfunctor of the Quot functor, defined by giving an upper bound on the Castelnuovo-Mumford regularity of the elements in $\QuotFunctor(A)$, for any $ \Bbbk$-algebra $A$.  Several  proofs use the same arguments of corresponding results in \cite{BBR}.

For every saturated module $M\in \QuotFunctor(A)$, if $A$ is local we define its Castelnuovo-Mumford regularity $\reg(M)$ in the obvious way; otherwise we say that the Castelnuovo-Mumford regularity of $M$ is $\min\{\reg(M\otimes_A A_{\mathfrak p}) \vert \mathfrak p \text{ prime ideal in } A\}$.

\begin{definition}\label{def:BoundedQuot} Let $\ell$ be an integer.
The \emph{Quot functor with bounded regularity}, that we denote by $\QuotFunctorReg{\ell}$, is  the subfunctor of  $\QuotFunctor$ that associates to every Noetherian $ \Bbbk$-algebra $A$ the set $\QuotFunctorReg{\ell}(A)=\{M\in \QuotFunctor\vert \mathfrak \reg(M)\leq \ell\}$. 
\end{definition}
 
   It is immediate that if $\ell'\leq \ell$, then for every $ \Bbbk$-algebra $A$,  $\QuotFunctorReg{\ell'}(A)$ is a subset of $\QuotFunctorReg{\ell}(A)$. Furthermore, if $r$ is the Gotzmann number of $p(z)$, $\QuotFunctorReg{r}$ is exactly $\QuotFunctor$.

From now on, we set two positive integers, $\ell$ and $s\geq \ell$. 
For every $ \Bbbk$-algebra $A$, for every $M\in \QuotFunctorReg{\ell}(A)$, there is a unique graded $\Sk$-module generated in degree $s$, whose saturation is $M$:  $\langle M_{s}\rangle$. Hence,  the Quot Functor with bounded regularity can be considered by \cite[Lemma 5.2 and Theorem 5.1]{Dell} as a subfunctor of the following  Grassmann functor:
\[
\begin{array}{rcll}
\GrassFunctor{s}:  \Kalg &\rightarrow &\Sets &\quad\quad \text{ with }N_m(s)=m{n+s\choose s} \\
A&\mapsto &\GrassFunctor{s}(A)
\end{array}
\]
where
\[
\GrassFunctor{s}(A)=\{A\text{-submodule }F \subseteq \Sk^m_s\text{ such that }
\Sk^m_s/ F\text{ is locally free of rank }p(s)
\}.
\]

Therefore, the the Quot Functor with bounded regularity can be seen as a   subfunctor of the Grassmann functor $\GrassFunctor{s}$ in the following way
\begin{equation*}
 \QuotFunctorReg{\ell}(A) = \{  F\in  \GrassFunctor{s}(A) \hbox{ with } \Sk^m/\langle F \rangle  \hbox{ flat with  Hilbert  polynomial } p(z) \text{ and } \reg(F^{\sat})\leq \ell\}.
\end{equation*}
and to every $ \Bbbk$-algebras omomorphism $f\colon A \rightarrow B$ the function
\[
\begin{array}{rccl}
\QuotFunctorReg{\ell}(f)\colon&\ \QuotFunctorReg{\ell}(A)\ &\rightarrow& \QuotFunctorReg{\ell}(B)\\
& { F }& \mapsto\ & {F\otimes_A    B }
\end{array}
\]

Furthermore we define the natural transformation of functors
\[
\mathcal{H}^{[s]}: \QuotFunctorReg{\ell}\rightarrow \GrassFunctor{s}\,.
\]

We denote by $\pi_M$ the canonical projection $\Sk^m_s\rightarrow \Sk^m_s/M_s$.
 The functor $\GrassFunctor{s}$ is representable and  the representing scheme $\GrassScheme{s}$ is called the Grassmannian. By Plucker embedding, it can be seen as a closed subscheme of $\mathbb P^{\binom{N_m(s)}{p(s)}-1}$.

We will now introduce some useful subfunctors of $\GrassFunctor{s}$ and $\QuotFunctorReg{\ell}$.

We set a basis $\{b_1,\dots, b_{p(s)}\}$ for $A^{p(s)}$. Consider the complete list $\T_s^m=\{\tau_\ell\}_{\ell=1,\dots,N_m(s)}$ of terms $\tau=x^\alpha e_i$, $\vert \alpha\vert=s$, of $ \Bbbk[\mathbf x]_s^m$. $\T_s^m$ is the basis we consider for the $A$-module $\Sk^m_s$. 
 For every element $g \in \PGL := \mathrm{\PGL}_\QQ(n + 1)$, we denote by $\tilde{g}$ also the automorphism induced by $g$ on the Grassmann and Quot
functors
and $g\centerdot$ denotes the corresponding
action on an element.

Consider  $\mathcal I=\{a_1,\dots, a_{p(s)}\}\subset\{1,\dots, N_m(s)\}$, $\vert \mathcal I\vert =p(s)$ and
consider the injective morphism
$\Gamma_{\mathcal I} : A^{p(s)}\rightarrow \Sk^{m}_s,\quad
b_i\mapsto \tau_{a_i}$,
$g \in \PGL$ and the subfunctor $\GrassFunctorL{\mathcal I,g}{s}$
that associates to every noetherian $ \Bbbk$-algebra $A$ the set
\[\GrassFunctorL{\mathcal I,g}{s}(A)=\{F\in \GrassFunctor{s}(A)\vert \pi_F\circ  \tilde{g} \circ\Gamma_{\mathcal I}\text{ is surjective}\}\]

The open subfunctors $\GrassFunctorL{\mathcal I, \mathrm{Id}}{s}$ provide an open cover of $\GrassFunctor{s}$, as $\mathcal I$ varies among the subsets of $\{1,\dots N_m(s)\}$ containing $p(s)$ elements \cite[Lemma 8.13]{GW}. We refer to these open subfunctors as \emph{standard open
cover} of  $\GrassFunctor{s}$.

For every $\mathcal I\subset\{1,\dots, N_m(s)\}$, $\vert \mathcal I\vert=p(s)$, for every $g\in \PGL$, we define the following open subfunctors of 
$\QuotFunctorReg{\ell}$:
\begin{equation}\label{eq:standardcover}
 \QRegA{\mathcal I,g}{\ell,s}(A):=\left(\mathcal{H}^{[s]}\right)^{-1}\left(\GrassFunctorL{\mathcal I,g}{s}(A)\right)\cap \QuotFunctorReg{\ell}.
\end{equation}
 Obviously, taking $g=\mathrm{Id}$, as $\mathcal I$ varies, the subfunctors in \eqref{eq:standardcover} cover $\QuotFunctorReg{\ell}$.

\section{Quasi-stable open cover of the Grassmannian}\label{sec:covergr}

We can associate to the set $\mathcal I\subset\{1,\dots, N_m(s)\}$  the  set of monomials  $\mathcal U_{\mathcal I} := \{\tau_i\}_{i \in \mathcal I}\subset
\T_s^m$  and its complementary $\UC$. Observe that if $\vert \mathcal I\vert=p(s)$, then $\vert \UC\vert=N_m(s)-p(s)$. In this paper we prefer to consider a different open cover of
the Quot scheme defined considering some special $\mathcal U_{\mathcal I}$.

\begin{lemma}\label{lem:grassFunctorUmarkedset}
Consider  $\mathcal I=\{a_1,\dots, a_{p(s)}\}\subset\{1,\dots, N_m(s)\}$, $\vert \mathcal I\vert =p(s)$. 
Let us assume that the monomial module $U:=\langle \UC\rangle\subset \Sk^m$is quasi-stable.
\begin{enumerate}[(i)]
\item  $F\in \GrassFunctorL{\mathcal I,\mathrm{Id}}{s}(A)$ if and only if it is generated as an $A$-module by a $\UC$-marked set.
\item If $F$ belongs to $\GrassFunctorL{\mathcal I,\mathrm{Id}}{s}(A)$, then for every
$s^\prime\geq s$ the $A$-module    $\langle F\rangle_{s^\prime}$   contains a
free submodule of rank $\geq q(s^\prime)$ generated by a ${\langle \UC \rangle
}\cap \T_{s^\prime}^m$-marked set.
\end{enumerate}
\end{lemma}

\begin{proof}\ 

\begin{enumerate}[(i)]
\item If $F$ belongs to $\GrassFunctorL{\mathcal I,\mathrm{Id}}{s}(A)$, since $\pi_F\circ \mathrm{Id} \circ \Gamma_{\mathcal I}$ is surjective, $\mathcal U_{\mathcal I}$ is a generating set for the module $\Sk^m_s/F$. Then, for every $\tau\in \UC$, we consider the polynomial $f_\tau=\tau-\pi_F(\Gamma_{\mathcal I}(\tau))$. The module element $f_\tau$ is a homogeneous marked element of $\Sk^m$, with $\Ht(f_\tau)=\tau$ and $\tau-f_\tau \in \langle \mathcal U_{\mathcal I}\rangle^A=\langle\mathcal N(U)_s\rangle^A$. Hence $\{f_\tau\}_{\tau\in \UC}$ is a $\UC$-marked set contained in $F$. Observe that $\langle f_\tau\rangle^A\subset F$ and $\rk(F_s)=\rk\langle f_\tau\rangle$, hence $F=\langle f_\tau \rangle^F$.

Vice versa, let $G=\{f_\tau\}_{\tau\in \UC}$ be the $\UC$-marked set generating $F$. Then every $\tau \in  \Sk^m${, there is $g\in F_{\vert\tau\vert}$ such that $\tau-f=\sum_{\tau'\in \mathcal N(U)_{\vert\tau\vert}}a'\tau'$, $a'\in A$ by Corollary \ref{lem:markedlexorder}}. 
Hence the $A$-module $F$ generated by $\{ f_\tau\}_{\tau \in \mathcal U^c}$ belongs to $\GrassFunctorL{\mathcal I,\mathrm{Id}}{s}(A)$.
\item We denote by $G^{(s^\prime)}$ the set $\{x^\delta f_\tau\vert f_\tau \in G, \deg(x^\delta f_\tau)=s^\prime, \min(\tau)\geq \max(x^\delta)\}$.
Due to the fact that $\langle G^{(s^\prime)} \rangle^A \subset \langle F \rangle_
{s^\prime}$ this statement follows from Theorem \ref{th:markedSetChar} \eqref{it:markedSetChar_iii}.
\end{enumerate}
\end{proof}

The following example shows that $\Sk^m/\langle \UC\rangle $ and $\Sk^m/\langle F\rangle$, with $F\in \GrassFunctorL{\mathcal I,\mathrm{Id}}{s}(A)$, in general do not have the same Hilbert polynomial or function.
\begin{example}
  In $\Sk=k[x_2, x_1, x_0]$, we consider $\UC=\{x_1x_2, x_0^2\}$ and  $U:=\langle \UC\rangle\subset \Sk$. Let $M$ be the submodule of $\Sk$ generated by $f_1 = x_1x_2+x_0x_1$, $f_2 = x_0^2+x_0x_2$,
which form a $\UC$-marked set. The Hilbert polynomial of $\Sk/U$ is constant, while the Hilbert polynomial of $\Sk/\langle f_1,f_2\rangle$ has degree 1. Hence they also do not have the same Hilbert function.
\end{example}

\begin{lemma}\label{lem:locRing}
Let  $(A, \mathfrak m,  \Kext  )$ be a local ring and $F \in
\GrassFunctor{s}(A)$. Then $F \in \GrassFunctorL{\mathcal I,\mathrm{Id}}{s}(A)$ if
and only if $F \otimes_A    \Kext  \in \GrassFunctorL{\mathcal I,\mathrm{Id}}{s}(  \Kext  )$.
\end{lemma}

\begin{proof}
By the extensions of the scalars it is clear that $F \otimes_A   \Kext  \in \GrassFunctorL{\mathcal I,\mathrm{Id}}{s}(  \Kext  )$ if
$F \in \GrassFunctorL{\mathcal I,\mathrm{Id}}{s}(A)$. Therefore we only
prove the other direction.

Assume that $F \otimes_A   \Kext  \in  \GrassFunctorL{\mathcal I,\mathrm{Id}}{s}(  \Kext  )$ and let
$\{\overline{f}_\tau\}_{\tau\in \UC}$ the $\UC$-marked set
generating $F \otimes_A   \Kext  $.
Let us consider a set of polynomials  $\{{f}_\tau\}_{\tau\in \UC}
\subset F$ such that the image of each $f_\tau$  in $ \Kext  [\mathbf x]^m_s$ is
$\overline{f}_\tau$.

We construct for $F$ a $q(s) \times N_m(s)$ matrix $M_F$. We order (in any way)
the terms of $\T_s^m$: $x^{\alpha_1}e_{k_1},
\dots,x^{\alpha_{N_m(s)}}e_{k_{N_m(s)}}$ and the elements $f_\tau$. The $j$-th
column of $M$ corresponds to the term $x^{\alpha_j}e_{k_j}$. The $i$-th row of
$M_F$ corresponds to the coefficients in the $i$-th element in
$\{{f}_\tau\}_{\tau\in \UC}$.

Considering the images of the entries in $  \Kext  $ we obtain the analogous matrix
$M'$ for   $\{\overline{f}_\tau\}_{\tau\in \UC}$. By hypothesis the
minor corresponding to $\mathcal U^c$ of this last matrix is invertible. Then
the corresponding minor in $M$ is also invertible, because $A$ is local.

In general $\{{f}_\tau\}_{\tau\in \mathcal U^c}$ is not a $\UC$-marked
set. But we can obtain a $\UC$-marked set by performing a row reduction of $M$
such that the minor from above gets the identity matrix.
\end{proof}

\begin{definition}\label{def:stable}
For any admissible Hilbert polynomial $p(z)$ in $\Sk^m$,  given $\ell$, we consider the integers $s\geq \ell$,  $p(s)$, $N_m(s)$. We define the following sets:
\begin{itemize}
\item $\mathbb{QS}$ is the set of the quasi-stable modules in $\kappax^m$ whose minimal monomial set of generators consists of $N_m(s)-p(s)$ terms of degree $s$.
\item $\mathbb{QS}_{p(z)}$ is the subset of  $\mathbb{QS}$  containing monomial modules having Hilbert polynomial $p(z)$.
\item $\mathbb{QS}_{p(z)}^{\ell}$  is the subset of $\mathbb{QS}_{p(z)}$ containing  submodules $U$   with  $\reg(U^{\sat})\leq \ell$.
\item  ${L}^{[\ell,s]}_{p(z)}$ is the closed subset of $\GrassScheme{s}$ defined by the ideal 
$$\left( g\centerdot  \Delta_{\mathcal{I}} \ \vert \ \forall \  g \in \PGL(n+1), \ \forall \
 \langle \UC\rangle \in \mathbb{QS}^{\ell}_{p(z)} \right).$$ 
\end{itemize}
\end{definition}

\begin{proposition}\label{prop:QuasiStableCoverGr}
The collection of subfunctors
\[
\left\{\GrassFunctorL{\mathcal I,g}{s} \mid g \in \PGL,\mathcal I\subset\{1,\dots, N_m(s)\}\; \text{s.t} \; \langle \UC \rangle \in \mathbb{ Q S}\right\}
\]
covers the Grassmann functor $\GrassFunctor{s}$.
\end{proposition}

\begin{proof}
We have to proof that for every $ \Bbbk$-Algebra $A$ and every $F \in
\GrassFunctor{s}(A)$ there exist $\mathcal I\subset\{1,\dots, N_m(s)\}$ with $\langle \UC
\rangle \in \mathbb{ Q S}$ and $g \in \PGL$ such that $F \in
\GrassFunctorL{\mathcal I,g}{s}(A)$ or equivalently such that
$g^{-1}\centerdot F \in \GrassFunctorL{\mathcal I,\mathrm{Id}}{s}(A)$.

As the question is local it is sufficient to consider the case that  the ring $A$
is  local. By Lemma \ref{lem:locRing}, we may assume that $A$ is in
fact a field.

Let $F \in \GrassFunctor{s}(A)$ for a field $A$. Let $\mathcal{J}$ be
the set of subsets of $\T_{s}^m$ of cardinality $N_m(s)-p(s)$.  As in the proof of Lemma \ref{lem:locRing}, we associate the $q(s)\times N_m(s)$ matrix $M_F$ to $F$, considering a set of generators for the module $F$. For every $\mathcal V\in \mathcal J$,  let  $\Delta_{\mathcal V}(M_F)$
be the minor of $M_F$ corresponding  to $\mathcal V \in \mathcal{J}$. 
It is obvious that there is $\mathcal V \in
\mathcal{J}$ such that $\Delta_{\mathcal V}(M_F) \neq 0$. 

If $\langle \mathcal V \rangle \in \mathbb{ Q S}$ we are already done: if $\mathcal I\subset \{1,\dots, N_m(s)\}$ such that $\UC=\mathcal V$, then $F$ belongs to $\GrassFunctorL{\mathcal I,\mathrm{Id}}{s}(A)$ . Assume that this
is not the case. Then there exists an obtruction to  quasi-stability: $x^\mu e_k \in \mathcal V$ and $j >
c := \min(x^\mu)$, such that $x_j \frac{x^\mu}{x_c} e_k \notin
\langle \mathcal V \rangle$. We denote by $\widehat{\mathcal V} \in \mathcal{J}$ the set
obtained by replacing in $\mathcal V$ the obstruction to quasi-stability $x^\mu e_k$
with $x^{\widehat{\mu}} e_{k} := x_j \frac{x^\mu}{x_c} e_k$.

Up to an autoreduction of $F$, we can assume
without loss of generality, that $F$ is generated by a $\mathcal V$-marked set, due to the fact that  $\Delta_{\mathcal V}(M_F)$ is non-zero.  Proposition
\ref{prop:cordtransformminor} guarantees that there is a linear coordinate
transformation $g \in \PGL$ with respect to the elementary move $m_{c,j,a}$ for
an $a \in A$, such that $\widehat{F} = g^ {-1}\centerdot F$ and $\widehat{F}$ is
 generated by a $\widehat{\mathcal V}$-marked set. This implies that
$\Delta_{\widehat{\mathcal V}}(M_{\widehat{F}}) \neq 0$. If $\langle \widehat{\mathcal V}\rangle\in \mathbb{ Q S}$ we are done:  if $\mathcal I\subset \{1,\dots, N_m(s)\}$ such that $\UC=\widehat{\mathcal V}$, then $g^{-1}\centerdot  F$ belongs to  $\GrassFunctorL{\mathcal I,\mathrm{Id}}{s}(A)$.
If $\langle \widehat{\mathcal V}\rangle\notin \mathbb{ Q S}$, we can  repeat this construction starting from an obstruction to stability for the module $\langle \widehat{\mathcal V}\rangle$.

The claim of the proposition follows from a simple termination argument, which
shows, that we finally get a $\widehat{\mathcal V}\in \mathcal J$ and $g\in \PGL$ such that  there is ${\mathcal I}\subset\{1,\dots,N_m(s)\}$ such that $\mathcal U_{{\mathcal I}}^c=\widehat{\mathcal V}$, $\langle \widehat{\mathcal V}\rangle \in \mathbb{ Q S}$ and ${g}^{-1}\centerdot  F$ belongs to $\GrassFunctorL{\mathcal I,\mathrm{Id}}{s}(A)$. We introduce an ordering on $\mathcal {J}$. Given two sets
$\mathcal V_{1},\mathcal V_ {2} \in \mathcal{J}$, we first sort them according to
$\prec_{\mathrm{TOP}_{\mathrm {degrevlex}}}$ (greatest term first) and then compare the two
sets entry by entry again with respect to $\prec_ {\mathrm{TOP}_{\mathrm{degrevlex}}}$. 
Then $\mathcal V_1<\mathcal V_2$ if there is $i$ such that for every $j<i$, the j-th entry of $\mathcal V_1$ is the same as the $j$-th entry of $\mathcal V_2$, while the $i$-th entry of $\mathcal V_1$ is smaller than (or equal to) the $i$-th entry of $\mathcal V_2$ with respect to $\prec_ {\mathrm{TOP}_{\mathrm{degrevlex}}}$.

Our construction gives at each recursion a set $\widehat{\mathcal V}$ such that $\widehat{\mathcal V}>\mathcal V$
with respect to the ordering we defined.  In this way we  construct a strictly
ascending chain of sets in $\mathcal{J}$. Since $\mathcal{J}$ is a finite set,
the chain must be finite, too. Hence, our construction only stops when there are no obstructions to  quasi-stability, that is when it
reaches a set $\widehat{\mathcal V}$ such that $\langle\widehat{\mathcal V}\rangle\in\mathbb{QS}$. 
\end{proof}

\begin{definition}
We call \emph{quasi-stable subfunctor} of $\GrassFunctor{s}$ any
element of the collection of subfunctors of Proposition \ref
{prop:QuasiStableCoverGr}.
\end{definition}
\begin{remark}
The same statement as Proposition \ref{prop:QuasiStableCoverGr} is proved in \cite[Proposition 5.4]{BBR}, concerning the Grassmannian of linear spaces of $\Sk_s$. We highlight that in the present paper we consider  the action of $\PGL$ on $\Sk^m$, hence \cite[Proposition 5.4]{BBR} does not apply this case. Furthermore,  the proof of Proposition \ref{prop:QuasiStableCoverGr} gives an algorithmic strategy to explicitly construct $g$ such that  ${g}^{-1}\centerdot  F$ belongs to $\GrassFunctorL{\mathcal I,\mathrm{Id}}{s}(A)$, with $\langle \UC \rangle \in \mathbb{QS}$.
\end{remark}

\begin{corollary}\label{cor:SchCoverGr}
For every $g \in \PGL$, for every $\mathcal I\subset\{1,\dots, N_m(s)\}$ such that  $\langle \UC \rangle \in \mathbb{QS}$, $\GrassFunctorL{\mathcal I,g}{s}$ is the functor of points of an affine scheme  that we denote by $\mathrm{G}_{\mathcal I,g}^{[s]}$, which is naturally isomorphic to $\mathbb A_ \Bbbk^{(N_m(s)-p(s))\cdot p(s)}$.
\end{corollary}
\begin{proof}
This is analogous to \cite[Proposition 5.4]{BBR}. 
\end{proof}

\begin{proposition}\label{prop:coverGrMinusL}
The collection of  open subschemes
\[
\{\GrassFunctorL{\mathcal I,g}{s}\mid g \in \PGL,\mathcal I\subset\{1,\dots, N_m(s)\}\; \text{s.t} \; \langle \UC \rangle \in 
\mathbb{QS}^\ell_{p(z)}\}
\]
covers $\GrassScheme{s}\setminus {L}^{\ell,s}_{p(z)}$.
\end{proposition}
\begin{proof}
It is sufficient to recall the definitions of $\mathbb{QS}_{p(z)}^{\ell}$ and $L_{p(z)}^{[\ell,s]}$ given in Definition \ref{def:stable}.

\end{proof}

\section{Stable cover and representability of Quot functors}\label{sec:coverQuot}

We will now prove that in order to cover the Quot Functor with bounded regularity $\QuotFunctorReg{\ell}$, it is sufficient to consider $\mathcal I$ such that $\langle \UC\rangle$ is stable, has the same Hilbert polynomial as the modules of the Quot functor and have regularity of their saturation bounded by $\ell$.

We divide the proof in 2 steps. In Proposition \ref{prop:coverQuot} we prove that in order to cover $\QuotFunctorReg{\ell}$ it is sufficient to consider $\mathcal I$ such that $\langle \UC\rangle$ in in $\mathbb{QS}_{p(z)}$. In Theorem \ref{thm:quasiStableCoverQuot}, we prove that only $\mathcal I$ such that $\reg(\langle \UC\rangle^{\sat})\leq \ell$ are necessary and that such a cover actually does not depend on the chosen $s\geq \ell$ for the embedding in the Grassmannian.

\begin{proposition}\label{prop:coverQuot}
Consider $s\geq \ell$. The collection of subfunctors
\[
\{\QRegA{\mathcal I,g}{\ell,s} \mid g \in \PGL,\mathcal I\subset\{1,\dots, N_m(s)\}\; \text{s.t} \; \langle \UC \rangle \in \mathbb{QS}_{p(z)}\}
\]
covers the Quot functor $\QuotFunctorReg{\ell}$.
\end{proposition}

\begin{proof}
We consider $\QuotFunctorReg{\ell}$ embedded by $\mathcal H^{[s]}$ in $\GrassScheme{s}$. By Proposition \ref{prop:QuasiStableCoverGr} we can im\-me\-dia\-te\-ly deduce that the Quot functor is covered by
\[
\left\{\QRegA{\mathcal I,g}{\ell,s}\mid g \in \PGL, \subset\{1,\dots, N_m(s)\} \; \text{s.t} \; \langle \UC \rangle \in \mathbb{QS} \right\}\,.
\]
  We obtain the statement proving that   $\QRegA{\mathcal I,\mathrm{Id}}{\ell,s}(A) \neq \emptyset$ for $\mathcal I\subset \{1, \dots, N(s)\}$ such that $\langle\UC\rangle\in \mathbb{ Q S}$  if and only if $\langle \UC \rangle \in
\mathbb{ Q S}_{p(z)} $. In fact  this  implies that for every $g\in  \PGL$ we have  $\QRegA{\mathcal I,g}{\ell,s}(A)\neq \emptyset$ if and only if $\langle \UC \rangle \in\mathbb{ Q S}_{p(z)}$.
 As this is a local and set-theoretical fact, we can assume that  $A$ is  a field.

Consider now a module $F \in \QRegA{\mathcal I,\mathrm{Id}}{\ell,s}(A)$, for $\mathcal I\subset \{1,\dots, N_m(s)\}$ such that $\langle \UC \rangle
\in \mathbb{QS}$.
Due to Lemma \ref{lem:grassFunctorUmarkedset} we know that $\langle F\rangle_s$ is generated by
a $\UC$-marked set. By Theorem \ref{th:markedSetChar} 
 we know that $ \langle F \rangle_{s'}$ contains an $A$-vector space of the same dimension
  as  $\langle \UC \rangle_{s'}$ for
every $s' \geq s$. This implies by Theorem \ref{th:markedSetChar} 
 that $N_m ({s'}) - p({s'}) = \dim(\langle F
\rangle_{s'}) \geq \dim(\langle \UC \rangle_{s'})$. But the growth theorem of
Macaulay \cite[Lem. 23]{Hulett1995}   implies that $\dim(\langle \UC
\rangle_{s'}) \geq N_m ({s'}) - p({s'})$, hence we have equality and the Hilbert
polynomial of $\langle \UC \rangle$ must be $p(z)$. 
\end{proof}

 In order to have an  open cover for $\QuotFunctorReg{l}$ made up of less open subsets than the one given in Proposition \ref{prop:coverQuot}, we need some preliminar results.

\begin{proposition}\label{cor:opCovQuotMarkedBasis} Consider $\mathcal I\subset\{1,\dots, N_m(s)\}$, such that $\langle \UC \rangle \in \mathbb{QS}_{p(z)}$ and $\reg(\langle \UC\rangle^{\sat})\leq s$.  Let $F$ be an element of  $\GrassFunctorL{\mathcal I,\mathrm{Id}}{s}(A)$. 
$F\in \QRegA{\mathcal I,\mathrm{Id}}{r,s}(A)$ if and only if   for every $s^\prime\geq s$ the
A-module    $\langle F\rangle_{s^\prime}$   is free of rank  $q(s^\prime)$  and
it is  generated by   $\langle \UC\rangle \cap \T_{s^\prime}^m$-marked
basis.
\end{proposition}

\begin{proof} Observe that in the present hypotheses, $\langle \UC\rangle$ is stable.
As the question is again local, we again assume that $A$ is a local ring. We first consider the special case with $A$ a field. 
Let $G=\{f_\tau\}_{\tau\in \UC}$ be the $\UC$-marked set generating $F$, and for every $s^\prime\geq s$ we denote by $G^{(s^\prime)}$ the set $\{x^\delta f_\tau\vert f_\tau \in G, \deg(x^\delta f_\tau)=s^\prime, \min(\tau)\geq \max(x^\delta)\}$.
It is immediate that $\langle G^{(s)} \rangle \subseteq F_s$. Using the same argument in the proof of Theorem \ref{prop:coverQuot}, the dimension of both vector
spaces is $q(s^\prime)$. Hence they must be equal for every degree $s^\prime$
and this implies via Theorem \ref{th:markedSetChar} 
 that $G$ is a $\UC$-marked
basis of $F$.

We generalize this result to the case $(A,\mathfrak m,  \Bbbk) $ a local ring  by Nakayama lemma, since for every $s'\geq s$ the $A$-module   $F_{s^\prime}$ contains the  free submodule $\langle G^{(s^\prime)}\rangle$ of rank $N_m(s')-p(s')$ (by Theorem \ref{th:markedSetChar}) 
  and the two $A/\mathfrak m$-vector spaces  $F_{s^\prime}\otimes_A  A/\mathfrak m  $ and $ \langle G ^{( s^\prime)}\rangle \otimes_A  A/\mathfrak m$  coincide as they have the same dimensions.
\end{proof}

We now easily prove that the functor $\QRegA{\mathcal I,\mathrm{Id}}{\ell,\ell}$ with
$\langle \UC \rangle \in \mathbb{QS}_{p(z)}^\ell$ is isomorphic to the functor
$\MFFunctor{\UC}^m$.
\begin{proposition}\label{cor:repQuotSubFunc}  Let $\mathcal I\subset\{1,\dots, N_m(\ell)\}$ be such that $\langle \UC\rangle \in\mathbb{QS}_{p(z)}^\ell$. 
\begin{enumerate}[(i)]
\item  \label{cor:repQuotSubFunc_i}The subfunctor $\QRegA{\mathcal I,\mathrm{Id}}{\ell,\ell}$ is isomorphic to the marked functor $\MFFunctor{\UC}^m$
\item \label{cor:repQuotSubFunc_ii} The subfunctor $\QRegA{\mathcal I,\mathrm{Id}}{\ell,\ell}$ is the functor of points of an an affine subscheme of the  affine space $\mathbb A^{p(\ell)\cdot q(\ell)}$.
\end{enumerate}
\end{proposition}
\begin{proof}
First, observe that $\langle \UC\rangle$ in the present hypotheses is stable, by Proposition \ref{prop:varieS}. 
Item \eqref{cor:repQuotSubFunc_i}  is a straighforward consequence of  Proposition \ref{cor:opCovQuotMarkedBasis}; item \eqref{cor:repQuotSubFunc_ii}  follows from \eqref{cor:repQuotSubFunc_i}  and Theorem \ref{thm:rappr}. 
\end{proof}

\begin{proposition}\label{cor:changedegree}
Let $U$ be a saturated quasi-stable module with Hilbert polynomial $p(z)$ and $\reg(U)\leq \ell$. We denote by $\mathcal I^{[s]}$ the set of indexes defining the module $U\cap \mathbb T_s^m$. Let $F=F^{\sat}$ be a module in $\QuotFunctorReg{\ell}$. For every $s,s'\geq \ell$,  $\langle F_s\rangle$ belongs to $\QRegA{\mathcal I^{[s]},\mathrm{Id}}{\ell,s}$ if and only if $\langle F_{s'}\rangle$ belongs to $\QRegA{\mathcal I^{[s']},\mathrm{Id}}{\ell,s'}$.
\end{proposition}
\begin{proof}
By Proposition \ref{cor:repQuotSubFunc} \eqref{cor:repQuotSubFunc_i}, the statement we intend to prove is equivalent to $\MFFunctor{\mathcal U^c_{\mathcal I^{[s]}}}^m\simeq \MFFunctor{\mathcal U^c_{\mathcal I^{[s']}}}^m$. For this isomorphism, we can repeat the arguments given in the proof of \cite[Theorem 3.4 (i)]{LR2}: indeed, all the arguments given in \cite{LR2} apply also in the stable case, and both $U\cap \mathbb T_s^m$ and $U\cap \mathbb T_{s'}^m$ are stable, by \ref{potenze} \eqref{importante_00}.
\end{proof}

We now prove that   the modules in $\mathbb{QS}^{\ell}_{p(z)}$ are sufficient to cover $\QuotFunctorReg{\ell}$, refining the result given in in Proposition \ref{prop:coverQuot}.

\begin{theorem}\label{thm:quasiStableCoverQuot}\ 
Consider $\ell\leq s$.
\begin{enumerate}[(i)]
\item \label{item:qsCoverQuot_i} Let $U=\langle U_s\rangle$ be a quasi-stable module in $\mathbb{QS}_{p(z)}^{\ell}$ and let $\mathcal I^{[s]}$ be as in Proposition \ref{cor:changedegree}. Then
$\QRegA{\mathcal I^{[s]},g}{\ell,s}= \QRegA{\mathcal I^{[r]},g}{\ell,r}=\QRegA{\mathcal I^{[r]},g}{r,r}$ as subfunctors of $\QuotFunctorReg{\ell}$.
\item The collection of subfunctors
\begin{equation}\label{eq:QFRegStableCover}
\left\{\QRegA{\mathcal I,g}{\ell,s} \mid g \in \PGL, \mathcal I\subset\{1,\dots, N_m(s)\} \; \text{s.t} \; \langle \UC \rangle \in \mathbb{QS}_{p(z)}^{\ell} \right\}
\end{equation}
covers the Quot functor with bounded regularity.
\end{enumerate}
\end{theorem}
\begin{proof}
\begin{enumerate}[(i)]
\item The equality between $\QRegA{\mathcal I^{[r]},g}{\ell,r}$ and $\QRegA{\mathcal I^{[r]},g}{r,r}$ is due to Corollary \ref{cor:RegBoundInMS}. We obtain the other equality by Proposition \ref{cor:changedegree}.

\item  By item \eqref{item:qsCoverQuot_i}, we can take $s=\ell$. 
As the question is local it is sufficient to consider the case that  the ring $A$
is  a field. Then Lemma \ref{lem:existsQS} applies.

\end{enumerate}
\end{proof}

\begin{corollary}
The Quot functor with bounded regularity  $\QuotFunctorReg{\ell}$ is an open subfunctor of $\QuotFunctor$.
\end{corollary}

\begin{definition}\label{def:quasiStableCoverQuot}
The \emph{quasi-stable cover} of $\QuotFunctorReg{\ell}$ is the collection of the open
subfunctors \eqref{eq:QFRegStableCover} of Theorem \ref{thm:quasiStableCoverQuot}.
\end{definition}

\begin{remark}
We constructed a cover of $\QuotFunctorReg{\ell}$ by using the quasi-stable modules in $\mathbb{QS}^{\ell}_{p(z)}$ and suitable deterministic changes of coordinates. There exists also a change of
coordinates to reach a Borel-fixed position, depending on $p=\mathrm{char}( \Bbbk)$ (for short, $p$-Borel fixed position). 
Therefore, we could repeat the statements and proofs of the present section in order to prove (constructively) the existence of a $p$-Borel cover of the
Quot functor, which is in general a more sparse cover 
than the quasi-stable one of Definition \ref{def:quasiStableCoverQuot}. However we prefer to consider the quasi-stable cover because this cover is independent of the characteristic and the
algorithm to reach the stable position is cheaper.

Furthermore in the next section we show that it is possible to compute equations for the open subscheme
of the Quot scheme corresponding to each quasi-stable open subfunctor. The
computational cost to get such  equations for   open neighborhoods of a given
point of the Quot scheme can be significantly different depending on the
neighborhood we choose. Hence it is an advantage to have a relatively dense
cover in order to choose the more convenient neighbourhood.
\end{remark}

\section{Representability of Quot functors and equations}\label{sec:GlobalEquations}

From now on, we will consider only  open subfunctors $\QRegA{\mathcal I, \mathrm{Id}}{\ell,s}$, with $\mathcal I\subset \{1,\dots, N_m(s)\}$
such that $\langle \UC \rangle\in \mathbb{QS}_{p(z)}^{\ell}$.

It was proved by Grothendieck that the Grassmann and Quot functors are
representable. We now  prove that the  {Quot functor with bounded regularity} is represented by a locally closed subscheme of the Grassman scheme, using the quasi-stable open cover  and the fact that the Quot functor is a Zariski sheaf.

\begin{theorem}\label{thm:rappQFReg}
The Quot functor with bounded regularity  is the functor of points of a  closed
subscheme $\mathrm{Quot}_{p(z)}^{m,[\ell]}$ of  $\GrassScheme{s}\setminus{L}^{[\ell,s]}_{p(z)}$.
\end{theorem}
\begin{proof}

By the semicontinuity theorem for regularity, for every  $\ell'<\ell$,  $\QuotFunctorReg{\ell' }$ can be considered as an open subfunctor $\QuotFunctorReg{\ell}$. Furthermore,   $\QuotFunctor$ is
a Zariski sheaf \cite[Section 5.1.3]{nitsure}. Hence,  $\QuotFunctorReg{\ell}$ is a Zariski sheaf.

By  \cite[Theorem VI-14]{EH} it
suffices to check the representability on an open cover of  $\QuotFunctorReg{\ell}$: we choose the one given in
\eqref{eq:QFRegStableCover}. By Theorem \ref{thm:quasiStableCoverQuot} \eqref{item:qsCoverQuot_i}
and by Proposition \ref{cor:repQuotSubFunc} \eqref{cor:repQuotSubFunc_ii}, we immediately conclude  that the Quot functor with bounded regularity is the functor of points of a scheme. By Proposition \ref{prop:coverGrMinusL} and by Theorem \ref{th:markedSetChar} \eqref{it:markedSetChar_ix}, we obtain that the scheme representing the Quot functor with bounded regularity is a closed  subscheme of  $\GrassScheme{s}\setminus{L}^{[\ell,s]}_{p(z)}$.
\end{proof}

\begin{corollary}\label{cor:QuotRepr}
The Quot functor $\QuotFunctor$ is the functor of points of a scheme and it is also a closed subscheme of $\GrassScheme{s}$.
\end{corollary}
\begin{proof}
It is sufficient to observe that $\QuotFunctor=\QuotFunctorReg{r}$ and use Theorem \ref{thm:rappQFReg}, observing that $L^{[r,s] }_{p(z)}=\emptyset$.
\end{proof}

We will now  give a constructive procedure to define an ideal $\mathfrak H\subset  \Bbbk[\Delta]$ such that $\Proj( \Bbbk[\Delta]/\mathfrak H)=\QuotScheme$, where $\Bbbk[\Delta]$ is the ring of Plucker coordinates for $\GrassScheme{r}$. The construction of $\mathfrak H$ starts from the ideals defining the affine schemes representing the open subsets $\QRegA{\mathcal I, \mathrm{Id}}{r,r}$ of \eqref{eq:QFRegStableCover}. Since $\QuotFunctor=\QuotFunctorReg{r}$, the cover of Definition \ref{def:quasiStableCoverQuot} is in this case indexed by $\mathbb{QS}_{p(z)}=\mathbb{QS}_{p(z)}^{r}$. Furthermore, if $U\in \mathbb{QS}_{p(z)}$, then $U_s$ is stable, being $s\geq r$.

For every $\mathcal I\subset\{1,\dots, N_m(s)\}$ such that $\langle \UC\rangle \in \mathbb{QS}_{p(z)}$, we denote by $ \Bbbk[C_{\mathcal I}]/(\mathcal
{R}_{\mathcal I})$ the quotient ring that defines the affine scheme representing $\QRegA{\mathcal I, \mathrm{Id}}{r,r}$ (see Theorem \ref{thm:rappr} and Proposition \ref{cor:repQuotSubFunc}).

As shown in Theorem \ref{thm:rappr} and Proposition \ref{cor:repQuotSubFunc}, $\Spec( \Bbbk[C_{\mathcal I}]/(\mathcal
{R}_{\mathcal I}))$ is the open subset of $\QuotScheme$ corresponding to the locus where $\Delta_{\mathcal I}$ can be inverted. Hence, the ideal $(\mathcal R_{\mathcal I})$ is the dehomogenization of the ideal in $ \Bbbk[\Delta]$ that defines $\QRegA{\mathcal I,\mathrm{Id}}{r,r}$ as a closed subscheme of $\GrassScheme{r}\setminus\{\Delta_{\mathcal I}\neq 0\}$.

We construct an ideal $\mathfrak h_{\mathcal I}\subset\Bbbk[\Delta]$ starting from $\mathcal  R_{\mathcal I}$: \\
Let  $\mathfrak G_{\mathcal I}\subseteq \Bbbk[C_{\mathcal I}][\mathbf x]$ be the $\UC$-marked set as defined in \eqref{polymarkKC}. We consider the $p(r)\times N_m(r)$-matrix $\mathcal M$, with columns indexed by the terms in $\mathbb T^m_r$ and rows indexed by the set $\mathcal  U_{\mathcal I}$. The columns of $\mathcal M$ contain the coefficients of the polynomials $h_{ij}$ such that $x^{\alpha_i}e_j \xrightarrow{\mathfrak G_{\mathcal I}^{(r)}}_* h_{ij}$, for every $x^{\alpha_i}e_j\in \mathbb T^m_r$.\\
Consider now the set of equations 
\begin{equation}\label{eq:CtoPl}
\{\Delta_{\mathcal J}-\mathcal M_{\mathcal J}\vert\mathcal J\subset\{1,\dots, N_m(r)\}, \vert \mathcal J\vert=p(r)\}
\end{equation}
 where $\mathcal M_{\mathcal J}$ is the minor corresponding to the columns with indexes $j\in \mathcal J$.
Consider   the set $\mathcal R_{\mathcal I}\subset\Bbbk[C_{\mathcal I}]$ 
 and compute the complete Gr{\"o}bner reduction of the set $\mathcal R_{\mathcal I}$ with respect to the set of polynomials  in \eqref{eq:CtoPl} using an elimination term order for the variables $C_{\mathcal I}$. We obtain a set of non-homogeneous polynomials in $ \Bbbk[\Delta]$. We homogenize each polynomial in this set with $\Delta_{\mathcal I}$ and take these homogeneous polynomials as  generators for an ideal in $\Bbbk[\Delta]$ that we denote by $\mathfrak h_{\mathcal I}$.

We define $\mathfrak h:=\bigcup_{\mathcal I\vert \UC\in \mathbb{QS}_{p(z)}} \mathfrak h_{\mathcal I}$. 
Moreover, we consider for every $g\in \PGL$ the set of equations $\mathfrak h^{g}$ obtained
by the action of $g$ on the elements of $\mathfrak h$. Finally we define the ideal
\[\mathfrak H:= \mathfrak P\cup\left(\bigcup_{g\in \PGL}\mathfrak h^{{g}}\right),\]
where $\mathfrak P$ is the ideal $\Bbbk[\Delta]$ containing the Plucker relations, that is $\Proj(\Bbbk[\Delta]/\mathfrak P)=\GrassScheme{r}$.

\begin{theorem}\label{thm:QuotClosedInGrass}
Let $p(z)$ be an admissible Hilbert polynomial for submodules of $\Sk^m$. The homogeneous ideal $\mathfrak H$ in the ring of Pl{\"u}cker coordinates $ \Bbbk[\Delta]$ of the Pl{\"u}cker embedding $\GrassScheme{r}\hookrightarrow \mathbb P^{N_m(r)\choose p(r)}$
 defines $\QuotScheme$ as a closed subscheme of $\GrassScheme{r}$.
 \end{theorem}
\begin{proof}
We follow the lines of the proof of \cite[Theorem 6.5]{BLMR} on the Hilbert scheme.

For convenience, we denote by $\mathcal Z$ the subscheme of $\GrassScheme{r}$
defined by $\mathfrak H$ and by $\mathfrak D$ the saturated ideal in $ \Bbbk[\Delta]$ that defines $\QuotScheme$. We will show that $Z=\QuotScheme$, although in general $\mathfrak H\neq \mathfrak D$. 

As equality of subschemes is a local property, we can check the equality locally. 
The proof is divided in two steps.
\begin{description}
\item[Step 1.] For every $\mathcal I\subset\{1,\dots,N_m(r)\}$ such that $\langle\UC\rangle\in \mathbb{QS}_{p(z)}$, the ideal generated by $\mathfrak h_{\mathcal I}$ defines the affine scheme representing $\QRegA{\mathcal I,\mathrm{Id}}{r,r}$
as closed subscheme of the scheme $\mathrm{G}_{\mathcal I,\mathrm{Id}}^{[r]}$ of Corollary \ref{cor:SchCoverGr}, representing $\GrassFunctorL{\mathcal I,\mathrm{Id}}{r}$.
\item[Step 2.] For every (closed) point $F$ of $\GrassScheme{r}$, $\mathcal Z$ and $\QuotScheme$ coincide on a neighborhood of $\langle F\rangle$.
\end{description}

{\bf Proof of Step 1.} We have to prove that for every $\mathcal I\subset\{1,\dots,N_m(r)\}$ such that $\langle\UC\rangle\in \mathbb{QS}_{p(z)}$, and for every $ \Bbbk$-algebra $A$ and $F$  belonging to $\GrassFunctor{r}(A)$, $\langle F\rangle\subset \Sk^m$ belongs to $\QRegA{\mathcal I,\mathrm{Id}}{r,r}$ if, and only if, the polynomials in $\mathfrak H$ vanish when evaluated at $\langle F\rangle$.
Referring to Proposition \ref{cor:opCovQuotMarkedBasis}, Theorem \ref{thm:rappr} and Proposition \ref{cor:repQuotSubFunc}, it is sufficient to observe that the vanishing at $\langle F\rangle$ of the polynomials in $\mathfrak H$ is equivalent to the vanishing at $\langle F\rangle$ of the polynomials in $(\mathcal R_{\mathcal I})$.

{\bf Proof of Step 2.} Both ideals $\mathfrak H$ and $\mathfrak D$ are invariant under the action of $\PGL$, $\mathfrak H$ by construction and $\mathfrak D$ because  $\QuotScheme$ is. Being $ \Bbbk[\Delta]$ a Noetherian ring, we can choose $h_1,\dots, h_d$ generators of the ideal $\mathfrak H$. More precisely, we denote by $g_i$ the element in $\PGL$ such that $h_i\in \mathfrak h^{{g_i}}$. Hence, $\mathfrak h^{{g_1}}\cup\cdots \cup \mathfrak h^{{g_d}}=\mathfrak H$. Being $\mathfrak H$ invariant under the action of $\PGL$, we get for every $g\in \PGL$
\[
\mathfrak h^{{g g_1}}\cup\cdots \cup \mathfrak h^{{g g_d}}=\left(\mathfrak h^{{g_1}}\cup\cdots \cup \mathfrak h^{ {g_d}}\right)^{{g}}=\mathfrak H^{{g}}=\mathfrak H.
\]
Using the invariance of $\mathfrak D$ under the action of $\PGL$ and by Step 1., if we restrict to the open subset $\mathrm{G}_{\mathcal I,gg_1}^{[r]}\cap \cdots \cap \mathrm{G}_{\mathcal I,gg_1}^{[r]}$, the ideals $\mathfrak H$ and $\mathfrak D$ define the same scheme, hence
\[
\QuotScheme\cap \left(\mathrm{G}_{\mathcal I,gg_1}^{[r]}\cap \cdots \cap \mathrm{G}_{\mathcal I,gg_1}^{[r]}\right)=\mathcal Z\cap \left(\mathrm{G}_{\mathcal I,gg_1}^{[r]}\cap \cdots \cap \mathrm{G}_{\mathcal I,gg_1}^{[r]}\right).
\]
It only remains to prove that for every $F\in \GrassScheme{r}$, there is $\mathcal I\subset\{1,\dots,N_m(r)\}$ such that $\langle \UC\rangle\in \mathbb{QS}$ and there is $g\in \PGL$ such that $\langle F\rangle \in \mathrm{G}_{\mathcal I,gg_1}^{[r]}\cap \cdots \cap \mathrm{G}_{\mathcal I,gg_1}^{[r]}$.

By Proposition \ref{prop:QuasiStableCoverGr} there is $\overline{\mathcal I}\subset\{1,\dots,N_m(r)\}$ such that $\langle \mathcal {U}^c_{\overline{\mathcal I}}\rangle\in \mathbb{QS}$ and there is $\overline g\in \PGL$ such that $\langle F\rangle \in \mathrm{G}_{\overline{\mathcal I},\overline{g}}^{[r]}$. Since $\mathrm{G}_{\overline{\mathcal I},\overline{g}}^{[r]}$ is an open subset of $\GrassScheme{r}$, an open subset of the orbit of $\langle F\rangle$ under the action of $\PGL$ is contained in $\mathrm{G}_{\overline{\mathcal I},\overline{g}}^{[r]}$: there is an open subset $\mathcal G$ of $\PGL$ such that for every 
$g' \in \mathcal G$, $g'^{-1}\centerdot F\in \mathrm{G}_{\overline{\mathcal I},\overline{g}}^{[r]}$, in other words $F\in \mathrm{G}_{\overline{\mathcal I},g'\overline{g}}^{[r]}$. Hence, for a general $g\in \PGL$,  $gg_1\overline g,\dots, g g_d \overline g \in \mathcal G$ and $F\in \mathrm{G}_{\mathcal I,gg_1}^{[r]}\cap \cdots \cap \mathrm{G}_{\mathcal I,gg_1}^{[r]}$ as wanted.

\end{proof}

\begin{remark}
We can rephrase the construction of the ideal $\mathfrak H$ and the statement of Theorem \ref{thm:QuotClosedInGrass} in order to obtain the ideal defining the scheme representing the functor $\QuotFunctorReg{\ell}$ as a closed subscheme of $\GrassScheme{s}\setminus{L}^{[\ell,s]}_{p(z)}$: it is sufficient to use in the construction the set of quasi-stable modules in $\mathbb{QS}_{p(z)}^{\ell}$.
\end{remark}

\section{An example: Computations on $\mathrm{Quot}^{2}_{2}$ on $\mathbb P^1$}\label{sec:example}

We consider $p(z)=2$, $n=1$ and $m=2$. This is the very first example one can think about in order to consider a non-trivial Quot scheme which is not a Hilbert scheme. Nevertheless, even if this is the simplest case on which we can test our methods, up to our knowledge nothing is known about this Quot scheme. In this section, we give a description of the construction of the ideals $\mathcal R_{\mathcal I}$ defining the  open cover of of Definition \ref{def:quasiStableCoverQuot} and the global equations defining $\mathrm{Quot}^{2}_{2}$. A detailed description on the geometry of $\mathrm{Quot}^{2}_{2}$ can be found in \cite{BRS}.

We consider  the scheme $\mathrm{Quot}^{2}_{2}$, that parameterizes the saturated submodules of $A[x_0,x_1]^2$ with constant Hilbert polynomial 2. The Gotzmann number is $r=2$, hence we will study this Quot scheme under the embedding in $\mathrm{Gr}^6_2$. We will also study $\mathrm{Quot}_2^{2,[1]}$, which is the  subscheme of $\mathrm{Gr}^6_2\setminus L_2^{1,2}$ whose functor of points is $\mathcal Q\mathrm{uot}_2^{2,[1]}$.

Consider
\[
U_1=(x_1^2) e_1\oplus(1) e_2, \quad U_2=(x_1)e_1\oplus (x_1)e_2, \quad U_3= (1)e_1\oplus(x_1^2) e_2.
\]

We have $\mathbb{QS}_2=\{U_1,U_2,U_3\}$ and $\mathbb{QS}_2^1=\{U_2\}$.

\subsection{Global equations for $\mathrm{Quot}^{2}_{2}$ in $\mathbb P^{14}$ }
 We keep on using the notation $U_i$ for the embedding of the quasi-stable module $U_i$ in $\mathrm{Gr}^6_2$, for $i=1,2,3$.

By the procedure of Section \ref{subsec:MarkedScheme}, and in particular by Theorem \ref{thm:rappr}, we explicitely construct the affine scheme representing $\MFScheme{\mathcal P(U_i)}$, $i\in\{1,2,3\}$. 

We construct the marked scheme on $\mathcal P(U_1)$ starting from the following marked set:
\[f_1=({x}^{2}-C_{{1,1}}xy-C_{{1,2}}{y}^{2})e_1,\quad
f_2=-(C_{{2,1}}xy+C_{{2,
2}}{y}^{2})e_1+{x}^{2}e_2,\]
\[
f_3=-(C_{{3,1}}xy+C_{{3,2}}{y}^{2})e_1+xye_2,\quad
f_4=(-C_{{4,1}}xy-C
_{{4,2}}{y}^{2})e_1+{y}^{2}e_2.
\]
We compute the ideal $\mathcal R_1$, obtaining
\[
\mathcal R_1=(-C_{{1,2}}C_{{3,1}}+C_{{2,2}},C_{{1,1}}C_{{4,1}}-C_{{3,1}}+C_{{4,2}},
-C_{{1,1}}C_{{3,1}}-C_{{1,2}}C_{{4,1}}+C_{{2,1}},-C_{{1,2}}C_{{4,1}}+C
_{{3,2}}
)
\]
In the same way, we can construct the ideals $\mathcal R_2$ and $\mathcal R_3$ which define the schemes representing $\MFScheme{\mathcal P(U_2)}$ and $\MFScheme{\mathcal P(U_3)}$.

Each of these 3 ideals has 4 generators and each of them allows the elimination of a variable (in the sense of Groebner theory), hence for every $i\in\{1,2,3\}$, $\MFScheme{\mathcal P(U_i)}\simeq \mathbb A^4$.

Following the construction outlined in Section \ref{sec:GlobalEquations}, we can compute the ideal $\mathfrak h$   in the polynomial ring $ \Bbbk[\Delta]$, where $\Delta$ is the set of Plucker coordinates of $\mathrm{Gr}^6_{2}$, $\vert\Delta\vert=15$. We then repeatedly apply  some random elements  $g_i\in\PGL$ on the ideal $\mathfrak h$, until for some $t$
\[\mathfrak h^{g_{1}}\cup\cdots \cup \mathfrak h^{g_{t}}\cup \mathfrak h^{g_{t+1}}=\mathfrak h^{g_{1}}\cup\cdots \cup \mathfrak h^{g_{t}}.\] 
By noetherianity, such a $t$ exists, and on this specific example $t=4$.

Adding the Plucker relations, we obtain the ideal defining $\mathrm{Quot}^2_{2}$ as a closed subscheme of $\mathbb P^{14}$. We can exhibit a set of generators made up of 61 polynomials of degree 2, 3 and 4. This Quot scheme has Hilbert polynomial
\begin{equation}\label{eq:hpolyQuot22}
\frac{11}{12}z^4+\frac{11}{3} z^3+\frac {67}{12} z^2+\frac {23}{6}z+1,
\end{equation}
hence it is a fourfold in $\mathbb P^{14}$ of degree 22.

\subsection{Global equations for $\mathrm{Quot}_{2}^{2,[1]}$ in $\mathbb P^{14}$}

By Theorem \ref{thm:quasiStableCoverQuot} \eqref{item:qsCoverQuot_i}, $\mathrm{Quot}_{2}^{2,[1]}$ embeds in $\mathrm{Gr}_2^4$, which embeds in $\mathbb P^5$. In this case,  $\mathrm{Quot}_{2}^{2,[1]}$ is simply the open subset $\mathbb A^4\simeq  \mathrm{Gr}_2^4\setminus L_2^{1,1}$. Indeed, in this case $L_2^{1,1}$ is defined by the ideal $(\Delta_{3,4})$.

We can also compute the equations defining $\mathrm{Quot}_{2}^{2,[1]}$ as an open subscheme of $\mathrm{Gr}_2^6\setminus L_2^{1,2}$. It is sufficient to consider only one marked scheme, the one defined by the ideal $\mathcal R_2$ and use the procedure described in Section \ref{sec:GlobalEquations}. After homogenizing the generators of $\mathcal R_2$ in $\Bbbk [\Delta]$, we obtain the ideal $\mathfrak h'$. We apply 4 times random elements $g_i\in \PGL$ on $\mathfrak h'$, obtaining the ideal $\mathfrak H'$ defining  $\mathrm{Quot}_{2}^{2,[1]}$ as a subscheme of $\mathrm{Gr}_2^6\setminus L_2^{1,2}$ in $\mathbb P^{14}$.  

In this case, $L_2^{1,2}=(\Delta_{3,6},\Delta_{1,4})$ and the closed scheme defined by $\mathfrak H'$, which contains the  scheme $\mathrm{Quot}^{2,[1]}_2$,  has Hilbert polynomial
\begin{equation}\label{eq:HPolyLocus}
{\frac {11}{12}}z^4+5z^3+{\frac {67}{12}}z^2+\frac{7}{2}z+1.
\end{equation}
The construction of $\mathfrak H'$ is quite faster than that of $\mathfrak H$, due to the fact that we have only one open subset in the open cover of $\mathrm{Quot}_2^{2,[1]}$, up to the action of $\PGL$.
Nevertheless
the ideal $\mathfrak H'$ by construction is contained in the ideal $\mathfrak H$ that defines $\mathrm{Quot}^2_2$, and the Hilbert polynomial \eqref{eq:HPolyLocus} of $\Proj(\Bbbk[\Delta]/\mathfrak H')$ is smaller than the one computed for $\mathrm{Quot}_2^2$ in \eqref{eq:hpolyQuot22}, hence $\Proj(\Bbbk[\Delta]/\mathfrak H')\supset \mathrm{Quot}^2_2$. 
Indeed, $\mathfrak H'$ defines a closed scheme that strictly contains $\mathrm{Quot}^{2,[1]}_2$. We have that  $\Proj(\Bbbk[\Delta]/\mathfrak H')\setminus\mathrm{Quot}^{2,[1]}_2\subset L_2^{1,2}$.

\section{Conclusions}

In this paper, we defined and investigated properties of marked bases over
a quasi-stable monomial module $U\subseteq\Sk^m_{\mathbf d}$. The family
of all modules generated by a marked basis over $\mathcal P(U)$ possesses a
natural structure as an affine scheme (Theorem \ref{thm:rappr}). In particular, we proved that the quasi-stable module $U$ provides upper
bounds on some homological invariants of any module generated by a
$\mathcal P(U)$-marked basis such as Betti numbers, regularity or
projective dimension (Corollary \ref{cor:bounds}).

We exploited these properties and constructions
to obtain local and global equations of Quot schemes and of special loci
of them, those given by an upper  bound on the Castelnuovo-Mumford regularity of a module.  Indeed, we proved that we have an open cover of a Quot scheme (resp. of its locus defined by an upper bound on the regularity) whose open subsets are suitable marked schemes over a quasi-stable module (Theorem \ref{thm:quasiStableCoverQuot}). Starting from this open cover, we obtained global equations defining a Quot scheme (resp. its locus defined by an upper bound on the regularity) as a closed (resp. locally closed) subscheme of a suitable projective space (Theorem \ref{thm:QuotClosedInGrass}).

  In future, inspired by Corollary \ref{cor:bounds}, we intend to investigate other loci of a Quot scheme, given by bounds on other numerical invariants of a module, such as projective dimension, extremal Betti numbers. In order to obtain similar results to those for the locus with bounded regularity, we will need also other tools, since  a preliminary study showed, for instance, that the locus given by a bound on projective dimension in general is not an open subset of a Quot scheme.

We will also investigate some explicit examples of Quot schemes, as we are doing in \cite{BRS}, in order to have a better comprehension of the geometry of a Quot scheme, using explicit equations defining it, locally or globally.

\section*{Acknowledgements}

The first author was supported by a fellowship by the Otto-Braun-Fonds.
The second and third author are member of GNSAGA (INdAM, Italy), which
partially funded the second author during the preparation of this
article. The third author was partially supported by the framework of PRIN
2010-11 \lq\lq Geometria delle variet\`a\ algebriche\rq\rq, cofinanced by
MIUR. The work of the fourth author was partially performed as part of the
H2020-FETOPEN-2016-2017-CSA project $SC^{2}$ (712689). Mutual visits of the
authors were funded by the Deutsche Forschungsgemeinschaft.

\bibliographystyle{plain}

\end{document}